\newtheorem{lemma}{Lemma}[section]
\newtheorem{prop}[lemma]{Proposition}
\newtheorem{thm}[lemma]{Theorem}
\newtheorem{metathm}[lemma]{Metatheorem}
\newtheorem{cor}[lemma]{Corollary}
\newtheorem{conj}[lemma]{Conjecture}
\theoremstyle{definition}
\newtheorem{defn}[lemma]{Definition}
\newtheorem{ex}[lemma]{Example}
\newtheorem{rem}[lemma]{Remark}
\newcommand{\Vect}{\mathrm{-Vect}}
\newcommand{\End}{\mathrm{End}}
\newcommand{\Hom}{\mathrm{Hom}}
\newcommand{\Ext}{\mathrm{Ext}}
\newcommand{\Spec}{\mathrm{Spec}}
\newcommand{\MMN}{\mathcal{MM}_{\mathrm{Nori}}}
\newcommand{\MM}{\mathcal{MM}}
\newcommand{\mot}{\mathrm{mot}}
\newcommand{\Nori}{\mathrm{Nori}}
\newcommand{\AM}{\mathrm{AM}}
\newcommand{\Ah}{\mathcal{A}}
\newcommand{\isom}{\cong}
\newcommand{\id}{\mathrm{id}}
\newcommand{\tensor}{\otimes}
\newcommand{\im}{\mathrm{Im}}
\newcommand{\Gal}{\mathrm{Gal}}
\newcommand{\eff}{\mathrm{eff}}
\newcommand{\dR}{\mathrm{dR}}
\newcommand{\an}{\mathrm{an}}
\newcommand{\DM}{\mathsf{DM}}
\newcommand{\gm}{\mathrm{gm}}
\newcommand{\DMgm}{\DM_{\gm}}
\newcommand{\DMT}{\mathsf{DMT}}
\newcommand{\MT}{\mathcal{MT}}
\newcommand{\MTZ}{\MT^f}
\newcommand{\onemot}{1\mathrm{-Mot}}
\newcommand{\MHS}{\mathrm{MHS}}
\newcommand{\VV}{(\K,\Q)\Vect}
\newcommand{\VValg}{(\Qbar,\Q)\Vect}
\newcommand{\VVQ}{(\Q,\Q)\Vect}
\newcommand{\sing}{\mathrm{sing}}
\newcommand{\Per}{\mathcal{P}} 
\newcommand{\Perf}{\tilde{\Per}} 
\newcommand{\Mod}{\mathrm{-Mod}}
\newcommand{\trdeg}{\mathrm{trdeg}}
\newcommand{\CDTarg}[2]{C(#1,#2)}
\newcommand{\Q}{\mathbb{Q}}
\newcommand{\Qbar}{{\overline{\Q}}}
\newcommand{\QR}{\tilde{\Q}}
\newcommand{\Z}{\mathbb{Z}}
\newcommand{\R}{\mathbb{R}}
\newcommand{\C}{\mathbb{C}}
\newcommand{\G}{\mathbb{G}}
\newcommand{\K}{\mathbb{K}}
\newcommand{\A}{\mathbb{A}}
\newcommand{\Na}{\mathbb{N}}
\newcommand{\Oh}{\mathcal{O}}
\newcommand{\Gm}{\G_m}
\newcommand{\Ch}{\mathcal{C}}
\newcommand{\coker}{\mathrm{coker}}
\begin{document}
\title{Galois theory of periods}
\author{Annette Huber}
\address{Math. Institut\\ Ernst-Zermelo-Str. 1\\ 79104 Freiburg\\ Germany}
\email{annette.huber@math.uni-freiburg.de}

\date{\today}
\maketitle
\tableofcontents
\section{Introduction}
We are interested in the period algebra $\Per$, a countable subalgebra of $\C$. Its elements are roughly of the form
\[ \int_\sigma\omega\]
where $\omega$ is a closed differential form over $\Q$ or $\Qbar$ and
$\sigma$ a suitable domain of integration also defined over $\Q$ or $\Qbar$.
All algebraic numbers are periods in this sense. Moreover, many interesting transcendental number like $2\pi i$, $\log(2)$, or $\zeta(3)$ are also period numbers.  

In this mostly expository note, we want to explain how the work of Nori extends Galois theory from the field extension $\Qbar/\Q$ to $\Per/\Q$, at least conjecturally. This realises and generalises a vision of Grothendieck.

As explained in more detail below, we should formulate Galois theory as saying that the natural operation
\[ \Gal(\Qbar/\Q)\times \Spec(\Qbar)\to \Spec(\Qbar)\]
is a torsor, see Section~\ref{ssec:galois} for more details. 
By the period conjecture (in Kontsevich's formulation in \cite{kontsevich}), we also have a torsor structure
\[ G_\mot(\Q)\times \Spec(\Per)\to\Spec(\Per)\]
where $G_\mot(\Q)$ is Nori's motivic Galois group of $\Q$. This is a pro-algebraic group attached to the rigid tensor category of mixed motives over $\Q$. 
By Tannaka theory, rigid tensor subcategories of the category of motives correspond to quotients of $G_\mot(\Q)$.  The case of Artin motives  gives back ordinary Galois theory. In particular, there is a natural exact sequence
\[ 0\to G_\mot(\Qbar)\to G_\mot(\Q)\to\Gal(\Qbar/\Q)\to 0.\]
It corresponds to the inclusion $\Qbar\subset\Per$.

It is a nice feature of Nori's theory (and a key advantage over the older approach) that it works for any abelian category of motives, even if it is not closed under tensor product. We get a weaker structure that we call semi-torsor, see Definition~\ref{defn:semi}. Hence, we also have a (conjectural) generalisation of Galois theory to 
sub-vector-spaces of $\Per$. It is unconditonal in the case of $1$-motives.

While not bringing us closer to a proof of the period conjecture in general, the theory has structural consequences: for example the period conjectures for intermediate fields $\Q\subset\K\subset \Qbar$ are equivalent. Another direct consequence
is the fully faithfulness of the functor attaching to a Nori motive $M$ the pair
$(H_\dR(M),H_\sing(M),\phi)$ of its realisations linked by the period isomorphism. 
This observation connects the period conjecture to fullness conjectures in the spirit of the Hodge or Tate conjecture. One such was recently formulated by
Andreatta, Barbieri-Viale and Bertapelle, see \cite{ABB}. As explained below, their conjecture has overlap with the period conjecture, but there is no implication in either direction.

Actual evidence for the period conjecture is weak. The $0$-dimensional case
amounts to Galois theory. The $1$-dimensional case has now been completed by the author and W\"ustholz in \cite{huber-wuestholz}, generalising work by Lindemann, Gelfond, Schneider, Siegel, Baker and W\"ustholz.

The idea of the present paper was conceived after my talk at the 2017 workshop
''Galois Theory of Periods and Applications'' at MSRI. It does not aim at proving new results, but rather at making key ideas of the long texts \cite{period-buch} and \cite{huber-wuestholz} more accessible. We also take the opportunity to
discuss the relation between the various period conjectures in the literature.

\subsection*{Structure of the paper} We start by giving several, equivalent definitions of the period algebra. We then turn to the period conjecture. We explain Kontsevich's formal period algebra $\Perf$ in terms of explicit generators and relations. The period conjecture simply asserts that the obvious surjective map $\Perf\to\Per$ is also injective.
We then give a reinterpretation of $\Perf$ as a torsor. Thus the main aim of the note is achieved: a (conjectural) Galois theory of periods.

The following sections make the connections to Grothendieck's version of the period conjecture and the more recent cycle theoretic approach of
Bost/Charles and Andreatta/Barbieri-Viale/Bertapelle.

The last section inspects some examples of special period spaces studied in the literature: Artin motives ($\Qbar$), mixed Tate motives (multiple zeta-values) and $1$-motives (periods of curves).

\subsection*{Acknowledgements}I thank my coauthors Stefan M\"uller-Stach and
Gisbert W\"ustholz for many insights and stimulating discussions. My ideas on the period conjecture were also heavily influenced by a discussion with Yves Andr\'e during a visit to Freiburg. The aspect of a Galois theory of periods
is also stressed in his survey articles \cite{andre1}, \cite{andre3}.

\section{Period spaces}
When we talk about periods, we mean a certain countable subalgebra  $\Per$ of
$\C$. It has several, surprisingly different, descriptions. Roughly
a period is a number of the form
\[ \int_\sigma\omega\]
where both $\omega$ and $\sigma$ are something defined over $\Q$ or
$\Qbar$. 

There are many possible ways to specify what $\omega$ and $\sigma$ to use. We will go through a list of possible choices:
nc-periods (or just periods) in Section~\ref{ssec:first}, Kontsevich-Zagier periods in Section~\ref{ssec:KZ}, cohomological periods in Section~\ref{ssec:coh},
and periods of motives in Section~\ref{ssec:mot_period}. 
In most cases, we will
define the subalgebra $\Per^\eff$ of effective periods and then
pass to $\Per:=\Per^\eff[\pi^{-1}]$. In particular, $\pi$ will be an element
of $\Per^\eff$.

Good to know:

\begin{metathm}All definitions describe the same set.
\end{metathm}

Throughout, we fix an embedding $\Qbar\to\C$. Let $\QR=\Qbar\cap \R$.
We denote by $\K$ a subfield of $\Qbar$.

\subsection{First definition}\label{ssec:first}
Let $X$ be a smooth algebraic variety over $\K$ of dimension $d$, $D\subset X$ a divisor with normal crossings (also defined over $\K$),
 $\omega\in\Omega^d_X(X)$ 
a global algebraic differential form, $\sigma=\sum a_i\gamma_i$ a
differentiable relative chain, i.e., $a_i\in\Q$, $\gamma_i:\Delta^d\to X^\an$
a $C^\infty$-map on the $d$-simplex such that $\partial \sigma$ is a chain
on $D^\an$. 
Then
\[ \int_\sigma\omega:=\sum a_i\int_{\Delta^d}\gamma_i^*\omega^\an\]
is a complex number.

\begin{defn}The set of these complex numbers is called the set of
effective periods $\Per^\eff$. 
\end{defn}
They appear as nc (for normal crossings) periods in \cite[Section~11.1]{period-buch}.
\begin{ex}The algebraic number $\alpha$ satisfying the non-trivial polynomial $P\in \K[t]$ is an effective period as
the value of the integral 
\[ \int_0^\alpha dt\]
with $X=\A^1$, $D=V(tP)$, $\omega=dt$, $\sigma=\gamma$ the standard
path from $0$ to $\alpha\in D(\C)$ in $\A^1(\C)=\C$.
\end{ex}

\begin{ex}[Cauchy's integral] The number $2\pi i$ is an effective period as the
value of the integral
\[ \int_{S^1}\frac{dz}{z}\]
with $X=\G_m$, $D=\emptyset$, $\omega= dz/z$, $\sigma=\gamma:[0,1]\to \C^*$ given by
$\gamma(t)=\exp(2\pi i t)$.
\end{ex}
\begin{lemma}The set $\Per^\eff$ is a $\Qbar$-algebra. It is independent
of the choice of $\K$.
\end{lemma}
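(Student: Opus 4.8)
The plan is to prove two separate assertions: first that $\Per^\eff$ is closed under addition and multiplication (and contains $\Qbar$), and second that the set does not depend on the choice of subfield $\K\subseteq\Qbar$ used in the definition. For the algebra structure, I would argue geometrically: given two effective periods $\int_\sigma\omega$ on $(X,D)$ of dimension $d$ and $\int_{\sigma'}\omega'$ on $(X',D')$ of dimension $d'$, their \emph{sum} is realized on the disjoint union $X\sqcup X'$ after passing to a common dimension. The standard trick is to multiply by the trivial period $1=\int_{[0,1]}dt$ on $(\A^1,\{0,1\})$ to raise dimension, so that both periods can be viewed on varieties of the same dimension and then added via disjoint union. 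The \emph{product} is handled by the external product: on $X\times X'$ with divisor $D\times X'\cup X\times D'$, the form $\omega\boxtimes\omega'$ integrated over $\sigma\times\sigma'$ gives $\left(\int_\sigma\omega\right)\left(\int_{\sigma'}\omega'\right)$ by Fubini. That $\Qbar\subseteq\Per^\eff$ follows from the first Example (taking $\K=\Qbar$), and that rational multiples stay in the set is built into the definition (the $a_i$ are rational, and one can also rescale $\omega$).

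For independence of $\K$, one inclusion is trivial: enlarging $\K$ only enlarges the class of admissible data, so $\Per^\eff(\K)\subseteq\Per^\eff(\Qbar)$ for any $\K\subseteq\Qbar$. The substantive direction is $\Per^\eff(\Qbar)\subseteq\Per^\eff(\Q)$, i.e.\ every period defined over $\Qbar$ is already a period defined over $\Q$. Here the idea is a restriction-of-scalars (Weil restriction) argument: if $(X,D,\omega,\sigma)$ is defined over a number field $\K$ of degree $n$ over $\Q$, then $X$, $D$ and $\omega$ descend to $\Q$-data on the Weil restriction $\mathrm{Res}_{\K/\Q}X$, whose complex points form $\prod_{\iota:\K\to\C}X^\iota(\C)$. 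One projects to the factor corresponding to the fixed embedding $\K\hookrightarrow\Qbar\hookrightarrow\C$, uses the chain $\sigma$ there (pushed forward to the product), and checks that the integral is unchanged. Alternatively, and perhaps more cleanly for an expository note, one can first reduce to the case where $\K/\Q$ is generated by a single algebraic number $\alpha$, spread the datum out over $\Q[t]/(P(t))$, and realize the coefficients of $\alpha$ as further integration against $dt$ over paths ending at $\alpha$, thereby absorbing the field of definition into additional geometry over $\Q$.

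The main obstacle I anticipate is the bookkeeping in the independence-of-$\K$ statement: one must verify carefully that after Weil restriction the relative chain $\sigma$ (whose boundary lies on $D^\an$) pushes forward to a valid relative chain on $(\mathrm{Res}_{\K/\Q}X)^\an$ with boundary on the descended divisor, and that the real number computed by the integral is genuinely preserved rather than merely changed by a factor or a sum over conjugates. Keeping track of how $\partial\sigma$ behaves under the various maps, and ensuring the normal-crossings condition on $D$ is preserved under products and Weil restriction, is where the real work lies; the algebra-structure part is essentially formal once the disjoint-union and product constructions are written down. I would expect the paper to cite \cite[Section~11.1]{period-buch} for the detailed verification rather than reproduce it.
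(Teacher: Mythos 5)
The paper itself gives no proof here beyond a pointer to \cite[Proposition~11.1.7]{period-buch}, which you correctly anticipated, so your sketch is being judged on its own merits. The algebra-structure half is fine and essentially the standard argument: disjoint union for sums after padding to a common dimension with $\int_{[0,1]}dt=1$, and Fubini on $(X\times X', D\times X'\cup X\times D')$ for products. The inclusion $\Qbar\subset\Per^\eff$ and $\Q$-linearity are as you say.

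The independence-of-$\K$ half has a real problem: Weil restriction is the wrong functor. If $X/\K$ has dimension $d$ and $[\K:\Q]=n$, then $\mathrm{Res}_{\K/\Q}X$ has dimension $nd$ over $\Q$, so there is no way to produce a \emph{top-degree} form on it (as required by the first definition) from a single $d$-form $\omega$ on $X$; the only natural $\Q$-rational top-degree form is something like $\bigwedge_\iota\omega^\iota$, whose period is a \emph{product over conjugates}, not $\int_\sigma\omega$. Projecting to the factor for the fixed embedding is also not defined over $\Q$, so it cannot be used to produce a $\Q$-rational datum. The clean argument is scheme-theoretic restriction of the base, i.e.\ simply composing the structure morphism $X\to\Spec\K\to\Spec\Q$. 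Then $X$ is a smooth $\Q$-variety of the \emph{same} dimension $d$; since $\K/\Q$ is separable, $\Omega^1_{\K/\Q}=0$ and the cotangent sequence gives $\Omega^1_{X/\Q}\cong\Omega^1_{X/\K}$, so $\omega$ remains a global top-degree form; $D$ remains a normal-crossings divisor; and $X(\C)=\coprod_{\iota:\K\hookrightarrow\C}X^\iota(\C)$, with $\sigma$ sitting in the component for the chosen embedding, so $\int_\sigma\omega$ is literally unchanged. (Your ``easy direction'' $\Per^\eff(\K)\subseteq\Per^\eff(\K')$ for $\K\subset\K'$ by base change, and the reduction from $\Qbar$ to finite extensions by spreading out, are both fine.) Your second ``alternative'' via $\Q[t]/(P(t))$ is essentially the same as the correct argument once you notice $\Spec(\Q[t]/(P))=\Spec\K$ and there is nothing to ``realize as further integration''; it is just forgetting the $\K$-structure.
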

\begin{proof}\cite[Proposition~11.1.7]{period-buch}.\end{proof}
In particular, $\pi \in \Per^\eff$.

\begin{defn}\label{defn:first}
\[ \Per:=\Per^\eff[\pi^{-1}].\]
\end{defn}

\begin{rem}
\begin{enumerate}
\item
In the above definition, it even suffices to restrict to affine $X$.
\item We may also relax the assumptions: $X$ need not be smooth, $D$ only
a subvariety, $\omega\in\Omega^n(X)$ not necessarily of top degree but only closed. 
\item We may also allow $\omega$ to have poles on $X$. We then need to impose the condition that the integral is absolutely convergent. This is the version given by Kontsevich and Zagier in \cite{kontsevich_zagier}.
\end{enumerate}
These implications are not obvious, but see \cite[Theorem~12.2.1]{period-buch}.
\end{rem}

\subsection{Semi-algebraic sets}\label{ssec:KZ}
The easiest example of a differential form is $dt_1\wedge\dots\wedge dt_n$
on $\A^n$ with coordinates $t_1,\dots,t_n$. Actually, this is the only one we need.

\begin{defn}\label{defn:KZ}An \emph{effective Kontsevich-Zagier period} is complex number whose real and imaginary part can be written as difference between the volumes of
$\Q$-semi-algebraic subsets of $\R^n$ (of finite volume).
\end{defn}
Note that a subset of $\R^n$ is $\Q$-semi-algebraic if and only if it is $\QR$-semi-algebraic.

\begin{prop}\label{prop:all_KZ} The set of effective Kontsevich-Zagier periods agrees with
$\Per^\eff$.
\end{prop}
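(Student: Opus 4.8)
The plan is to prove the two inclusions separately. For the inclusion $\Per^\eff\subseteq$ (effective Kontsevich--Zagier periods), I would start from the first definition: given smooth $X/\Q$ of dimension $d$, a normal crossings divisor $D$, a top form $\omega\in\Omega^d_X(X)$ and a relative chain $\sigma$ with $\partial\sigma$ on $D^\an$. By the remark following Definition~\ref{defn:first}, it suffices to treat affine $X$. Choose a closed embedding $X\hookrightarrow\A^N_\Q$; then $\omega$ extends to (or is the restriction of) a form expressible in the coordinates, and after triangulating one may replace each $C^\infty$-simplex $\gamma_i$ by a semi-algebraic one, since semi-algebraic singular chains compute the same homology (semi-algebraic triangulation of real algebraic, indeed semi-algebraic, sets). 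Pulling back $\omega$ to $\Delta^d$ gives an algebraic $d$-form whose coefficients are algebraic functions of the coordinates; writing the integral of such a form over a $\Q$-semi-algebraic region as a difference of volumes is then a matter of graphing: the integral $\int_S f\,dt_1\cdots dt_d$ of an algebraic function $f$ over a bounded $\Q$-semi-algebraic $S\subset\R^d$ equals $\mathrm{vol}(S^+)-\mathrm{vol}(S^-)$ where $S^\pm=\{(t,u): t\in S,\ 0\le \pm u\le \pm f(t)\}$ are again $\Q$-semi-algebraic subsets of $\R^{d+1}$ (after clearing the polynomial relation defining $f$). Taking real and imaginary parts throughout, this exhibits $\int_\sigma\omega$ as an effective Kontsevich--Zagier period.

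For the reverse inclusion, I would take a bounded $\Q$-semi-algebraic set $S\subseteq\R^n$ of finite volume and realise $\mathrm{vol}(S)=\int_S dt_1\wedge\cdots\wedge dt_n$ as an element of $\Per^\eff$. The set $S$ is a finite Boolean combination of loci $\{P>0\}$ and $\{P=0\}$ for polynomials $P\in\Q[t_1,\dots,t_n]$; by the additivity of volume under disjoint decomposition and inclusion–exclusion, one reduces to the case of a basic closed semi-algebraic set, i.e. a bounded region $S$ cut out by finitely many inequalities inside $\A^n_\Q$. Such an $S$ is a semi-algebraic domain of integration with boundary contained in the real points of the hypersurface $D=V(\prod_i P_i)\subset\A^n_\Q$, so $(X,D,\omega,\sigma)=(\A^n,V(\prod P_i),dt_1\wedge\cdots\wedge dt_n,[S])$ is precisely the data of the first definition, provided one checks that the fundamental class $[S]$ can be represented by a $C^\infty$ (or semi-algebraic, which suffices by the same comparison) relative chain with boundary on $D^\an$. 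Finiteness of the volume handles convergence. Since $\Per^\eff$ is closed under the operations used (it is a $\Qbar$-algebra, in particular closed under sums and differences, by the Lemma), and real and imaginary parts of a Kontsevich--Zagier period are by definition differences of such volumes, we conclude the containment.

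The main obstacle is the passage between $C^\infty$ chains and semi-algebraic (or piecewise-algebraic) chains in both directions: one must know that the relative singular homology $H_d(X^\an,D^\an;\Q)$ is computed equally well by $C^\infty$ chains and by semi-algebraic chains, and that a given class has a representative adapted to the algebraic stratification so that the integral is literally of the semi-algebraic shape. This rests on semi-algebraic triangulation theorems (Łojasiewicz, Hironaka) together with a Stokes-type argument showing that the integral $\int_\sigma\omega$ depends only on the class of $\sigma$ modulo boundaries supported on $D^\an$ — which is exactly the content of the comparison isomorphism between algebraic de Rham and singular cohomology underlying the whole period formalism. The remaining points — extending forms along a closed immersion, the graph trick reducing an integral to a volume, and the Boolean/inclusion–exclusion bookkeeping for semi-algebraic sets — are routine. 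A clean reference for the full argument is \cite[Theorem~12.2.1]{period-buch}, and I would ultimately cite it; the sketch above is how I would organise a self-contained proof.
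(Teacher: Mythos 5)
Your proposal outlines essentially the content behind the two citations the paper gives (\cite[Prop.~12.1.6]{period-buch} plus \cite[Theorem~12.2.1]{period-buch}), and the two main devices you invoke --- semi-algebraic triangulation to move between $C^\infty$ and semi-algebraic relative chains, and the ``graph trick'' $\int_S f = \mathrm{vol}(S^+) - \mathrm{vol}(S^-)$ to turn integrals of semi-algebraic functions into differences of volumes --- are indeed the right tools. Since the paper itself only cites the book, there is no alternative argument to compare against; you are reconstructing the proof of the cited results, and the structure you propose is sound.

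There is, however, one genuine gap. In the direction (KZ periods $\subseteq\Per^\eff$), you begin with ``a \emph{bounded} $\Q$-semi-algebraic set $S\subseteq\R^n$ of finite volume.'' But Definition~\ref{defn:KZ} only requires \emph{finite volume}, not boundedness, and these are not the same: $\{(x,y): x\geq 1,\ 0\leq y\leq x^{-2}\}$ is unbounded of volume~$1$. The reduction from unbounded to bounded needs an argument --- e.g.\ a $\Q$-rational change of coordinates such as $t_i = u_i/(1-u_i^2)$ mapping $(-1,1)\to\R$, whose Jacobian is algebraic, followed by another application of the graph trick to absorb the Jacobian. Without this step the inclusion is only established for a proper subclass of Kontsevich--Zagier periods. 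A second, smaller imprecision: after replacing a $C^\infty$ simplex by a semi-algebraic one, the pullback $\gamma^*\omega$ has \emph{semi-algebraic} (not algebraic) real and imaginary parts, and these may be unbounded near $\partial\Delta^d$ even though integrable; the sets $S^\pm$ in your graph construction are then unbounded of finite volume, which is permitted by Definition~\ref{defn:KZ} but should be said explicitly so the reader knows you are not using the boundedness you assumed elsewhere. Both issues are handled in the cited \cite[Theorem~12.2.1]{period-buch}, to which you rightly defer, but in a self-contained write-up they cannot be left implicit.
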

\begin{proof}\cite[Prop.~12.1.6]{period-buch}, combined with \cite[Theorem~12.2.1]{period-buch}.
\end{proof}

\subsection{Cohomological periods}\label{ssec:coh}
The definition becomes more conceptual when thinking in terms of cohomology.
Chains define classes in (relative) singular homology and differential forms define classes in (relative) algebraic de Rham cohomology. The definition of algebraic de Rham cohomology is very well-known in the case of a single smooth algebraic variety. There are different methods of extending this to the singular and to the relative case, see \cite[Chapter~3]{period-buch}. Singular cohomology and algebraic de Rham cohomology become isomorphic after base change to the complex numbers. By adjunction this defines
the \emph{period pairing}.

Let $X/\K$ be an algebraic variety, $Y\subset X$ a subvariety. Let
\[ \int: H^i_\dR(X,Y)\times H_i^\sing(X,Y;\Q)\to \C\]
be the period pairing.

\begin{defn}\label{defn:coh_period}The numbers in the image of $\int$ (for varying $X$, $Y$, $i$)
are called \emph{effective cohomological periods}.
\end{defn}

\begin{prop}[{\cite[Theorem~11.4.2]{period-buch}}]\label{prop:all_coh}The set of effective cohomological periods agrees with
$\Per^\eff$.
\end{prop}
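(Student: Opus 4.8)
The plan is to prove the two inclusions separately. The containment of $\Per^\eff$ in the set of effective cohomological periods is almost formal; the reverse inclusion carries the geometric content.

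\emph{Effective periods are cohomological periods.} Start from data $(X,D,\omega,\sigma)$ as in Section~\ref{ssec:first}: $X/\K$ smooth of dimension $d$, $D$ a normal crossings divisor, $\omega\in\Omega^d_X(X)$, and $\sigma$ a differentiable relative chain with $\partial\sigma$ a chain on $D^\an$. Because $\Omega^{d+1}_X=0$ the form $\omega$ is closed, and because $\dim D<d$ the group $\Omega^d_D$ vanishes; hence $\omega$ is a cocycle in the (cone) complex computing $H^d_\dR(X,D)$ and defines a class $[\omega]\in H^d_\dR(X,D)$. Likewise $\sigma$ defines a class $[\sigma]\in H_d^\sing(X,D;\Q)$, since $\partial\sigma$ is supported on $D$. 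The period pairing is set up so that on the level of a closed form and a smooth chain it computes the integral, whence $\int([\omega],[\sigma])=\int_\sigma\omega$. Thus every effective period is an effective cohomological period.

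\emph{Cohomological periods are effective periods.} Fix $X/\K$, $Y\subseteq X$, $i\ge 0$, a de Rham class $\alpha\in H^i_\dR(X,Y)$ and a singular homology class $\gamma\in H_i^\sing(X,Y;\Q)$; we must show $\int(\alpha,\gamma)\in\Per^\eff$. The first step is a reduction to good position. Using resolution of singularities over $\K$ (we are in characteristic $0$), the long exact sequences relating $H^\bullet(X,Y)$ to $H^\bullet(X)$ and $H^\bullet(Y)$, Mayer--Vietoris for an affine open cover, excision, and the naturality of the comparison isomorphism (hence of the period pairing) with respect to all of these, one reduces to the case that $X$ is smooth affine of dimension $d$ and $Y=D$ is a normal crossings divisor, possibly empty. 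The second step is the local computation there. By Grothendieck's theorem the algebraic de Rham cohomology of the smooth affine $X$ is computed by its complex of global algebraic differential forms, and the relative version by the associated cone complex built from global forms on $X$ and on the strata of $D$ (each of which is again affine); on the topological side, $H_i^\sing(X^\an,D^\an;\Q)$ may be computed with $C^\infty$ singular chains. Hence $\alpha$ is represented by a closed form $\omega\in\Omega^\bullet_X(X)$ together with forms $\eta_j$ on the strata of $D$, and $\gamma$ by a smooth relative chain $\sigma$; unwinding the pairing and applying Stokes' theorem gives $\int(\alpha,\gamma)=\int_\sigma\omega+\sum_j\pm\int_{\partial\sigma\cap D_j}\eta_j$. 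The main term $\int_\sigma\omega$ is an effective period by the relaxed form of Definition~\ref{defn:first} (closed forms of arbitrary degree are admissible; see the Remark following Definition~\ref{defn:first}), and each correction term is a period of the lower-dimensional situation living on $D_j$, hence lies in $\Per^\eff$ by induction on $\dim X$. Since $\Per^\eff$ is a $\Qbar$-algebra, the sum is again in $\Per^\eff$.

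The main obstacle is the reduction to good position in the first step of the converse. Making precise that an arbitrary pair $(X,Y)$ may be replaced, without altering the relevant period numbers, by a smooth affine variety together with a normal crossings divisor requires a careful treatment of the several available models of relative algebraic de Rham cohomology and of relative singular (co)homology (simplicial or cubical hyperresolutions, cone constructions, the mixed Hodge theoretic setup) and, crucially, the verification that the de Rham--Betti comparison isomorphism is natural for all the maps involved, so that the period pairing descends through the reductions. Once this bookkeeping is in place, the computation of the period in the smooth/normal-crossings case is elementary: it is Grothendieck's theorem together with Stokes' theorem.
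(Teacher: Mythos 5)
Your easy inclusion ($\Per^\eff \subseteq$ cohomological periods) is correct and is essentially what is done in the reference. The hard inclusion, however, has a genuine gap at the very point you flag as ``elementary once the bookkeeping is in place.''

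After reducing to $X$ smooth affine of dimension $d$ with $D$ a normal crossings divisor, you represent $\alpha\in H^i_\dR(X,D)$ by a closed $i$-form $\omega$ on $X$ (plus forms on the strata of $D$), with $i$ possibly strictly less than $d$. To conclude that $\int_\sigma\omega\in\Per^\eff$ you invoke the Remark after Definition~\ref{defn:first}, i.e.\ the claim that closed forms of arbitrary degree are admissible. But that Remark is itself only a citation in this paper (to \cite[Theorem~12.2.1]{period-buch}); nothing in the paper establishes it independently of the very proposition you are proving, and in the source text the equivalence with the relaxed definitions is not logically prior to Theorem~11.4.2. As written, the argument either presupposes a comparable theorem from the same circle of ideas or is circular. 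The degree reduction is precisely the non-trivial content here, and it is not supplied.

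The mechanism the source uses to close exactly this gap is Nori's basic lemma (Beilinson's lemma): after the reduction to affine pairs one further reduces to \emph{good pairs} $(X,Y,n)$ with $H^\bullet(X,Y)$ concentrated in degree $n=\dim X$ and free. This puts everything in top degree from the start, so that the integral is literally an integral of a top-degree form over a $d$-chain, i.e.\ an nc-period in the sense of Definition~\ref{defn:first}, with no appeal to the relaxed variants. Your proof never mentions the basic lemma and never performs a reduction to top degree; without one or the other, the step from ``closed $i$-form on a $d$-dimensional affine variety'' to ``effective period'' is unproven. Secondarily, your ``good position'' reduction via resolution, Mayer--Vietoris, and excision is only asserted; you correctly identify this as the heavy lifting, but it is worth noting that the source's treatment is organised around the basic-lemma filtration rather than ad hoc excision arguments, which also keeps the period pairing under control through the reductions.
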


\subsection{Periods of motives}\label{ssec:mot_period}
The insistance that we need to invert $\pi$ in order to get a good object becomes
clear from the motivic point  of view.

Two kinds of motives are relevant for our discussions:
\begin{enumerate}
\item Voevodsky's triangulated categories of (effective) geometric motives
$\DMgm^\eff(\K,\Q)$ and $\DMgm(\K,\Q)$;
\item Nori's abelian category $\MMN(\K,\Q)$ of motives over $\K$ with respect to
singular cohomology and its subcategory of effective motives.
\end{enumerate}

\begin{thm}[Nori, Harrer {\cite[Theorem~7.4.17]{harrer}}] There is a natural contravariant trianguled tensor-functor
\[ \DMgm^\eff(\K,\Q)\to D^b(\MMN^\eff(\K,\Q))\]
compatible with the singular realisation. It extends to non-effective motives.
\end{thm}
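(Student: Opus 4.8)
The plan is to construct the functor by transporting the known universal property of Nori motives through Voevodsky's construction of $\DMgm^\eff$. First I would recall that $\DMgm^\eff(\K,\Q)$ is built from the additive category $\Cor(\K)$ of smooth varieties with finite correspondences: one forms the homotopy category of bounded complexes, localises at the Nisnevich-local and $\A^1$-homotopy relations, and then takes the pseudo-abelian envelope. Correspondingly, Nori's category $\MMN^\eff(\K,\Q)$ carries, essentially by construction, a homological functor $\Sm(\K)\to \MMN^\eff(\K,\Q)$ together with a system of boundary maps (Nori's diagram is built from pairs $(X,Y,i)$ with edges for morphisms, for the connecting maps of pairs, and for multiplicativity), and it is universal among abelian-category-valued ``cohomology theories'' refining singular cohomology. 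The strategy is therefore: produce a functor out of $\Cor(\K)$ into $D^b(\MMN^\eff(\K,\Q))$, check it kills the relations imposed by Voevodsky, and then invoke universality on the Nori side to pin it down up to canonical equivalence.

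The key steps, in order, are the following. Step one: extend the tautological functor $\Sm(\K)\to \MMN^\eff$ to finite correspondences, i.e.\ define the action of a correspondence $Z\subset X\times Y$ by the transpose of the usual pull--push $H^*(Y)\to H^*(X)$, using that Nori motives have a well-behaved intersection/cycle-class formalism coming from the compatible singular realisation; this is where one needs that $\MMN^\eff$ supports enough of the six-functor or at least Gysin/duality machinery to make correspondences act. Step two: pass to the category of bounded complexes and check homotopy invariance (the functor sends $X\times\A^1\to X$ to a quasi-isomorphism in $D^b(\MMN^\eff)$) and the Mayer--Vietoris/Nisnevich descent relation (a Nisnevich square goes to a distinguished triangle); both follow from the corresponding properties of singular cohomology together with conservativity of the realisation, or more honestly from Nori's construction of the connecting morphisms. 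Step three: deduce that the functor descends to $\DMgm^\eff(\K,\Q)$ after pseudo-abelianisation, which is automatic since $D^b$ of an abelian category is already idempotent-complete. Step four: upgrade to a tensor functor using that the tensor structure on $\DMgm^\eff$ is induced by products of varieties, which the functor respects by the multiplicativity edges in Nori's diagram; contravariance is built in because we are landing in \emph{cohomological} motives. Step five: extend to $\DMgm$ by inverting the Lefschetz/Tate object, which on the Nori side corresponds to the motive $\Q(1)$ that becomes invertible precisely after the passage $\Per^\eff\rightsquigarrow\Per^\eff[\pi^{-1}]$; this matches the non-effective statement.

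The main obstacle I expect is Step one together with Step two: making finite correspondences act on Nori motives and verifying Nisnevich descent and $\A^1$-invariance purely inside the abelian category $\MMN^\eff$, rather than after realisation. The issue is that Nori's category is defined by a somewhat opaque universal (Tannakian-type) construction, so one does not a priori have the geometric tools (Gysin triangles, localisation sequences, projective bundle formula) needed to see that Voevodsky's defining relations are satisfied; one has to build these inside $\MMN^\eff$ first, which is the technical heart of Harrer's thesis. A cleaner route, and the one I would actually pursue to keep the argument honest, is to use that the singular realisation $D^b(\MMN^\eff)\to D^b(\Q\text{-}\mathrm{mod})$ (or better, the Betti realisation refined to the full motivic diagram) is conservative and that $\DMgm^\eff$ is generated as a triangulated category by the $M(X)$, so that a functor is determined by its effect on these generators and their morphisms; one then only has to check the relations \emph{there}, where they reduce to statements about singular cohomology of smooth varieties that are classical. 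Finally, uniqueness of the resulting functor (it is the one ``compatible with the singular realisation'') follows from the rigidity of such constructions, i.e.\ from Nori's universal property applied on the target side.
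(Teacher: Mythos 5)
The paper itself offers no proof here, only a citation to Harrer's thesis, so the comparison must be against what that construction actually does. Your sketch reproduces the correct high-level skeleton (complexes of correspondences, descent to Voevodsky's localisation, tensor and non-effective extensions) and you correctly flag Step one/two as the crux, but there is a real gap at the very start that your outline slides past. Your "tautological functor $\Sm(\K)\to\MMN^\eff$" does not exist in the form you need: for each $i$ there is an object $H^i_\Nori(X)$, but the source of your Step two has to be a functor $X\mapsto C^\bullet(X)$ into \emph{bounded complexes} of Nori motives, not a collection of disembodied cohomology objects. Producing such a functorial complex is precisely where Nori's Basic Lemma (Beilinson's lemma) enters: one uses it to build a system of \emph{very good filtrations} $\emptyset\subset F_0X\subset\dots\subset F_dX=X$ of an affine variety so that each $H^j_\Nori(F_iX,F_{i-1}X)$ is concentrated in degree $j=i$, yielding a cellular complex in $\MMN^\eff$; then one has to pass to a colimit over all such filtrations to make the assignment functorial and, separately, to make finite correspondences act on these complexes. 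None of this is visible in your sketch, and without it the construction does not start. This is the technical heart of Harrer's thesis, not an implementation detail.

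The "cleaner route" you propose at the end is also overstated. Conservativity of $D^b(\MMN^\eff)\to D^b(\Q\Vect)$ does hold (exactness plus faithfulness of the fibre functor passes to cohomology objects of a complex) and is indeed what one uses to verify that the candidate functor turns $\A^1$-projections and Nisnevich squares into quasi-isomorphisms and distinguished triangles. But conservativity only detects \emph{isomorphisms}; it does not make the realisation faithful on Hom-sets (it kills all higher $\Ext$, since $\Q\Vect$ is semisimple), so you cannot check arbitrary relations among morphisms "after realisation." More importantly, it does nothing towards \emph{constructing} the functor on $\Cor(\K)$ in the first place, which is where the Basic Lemma and the filtration machinery are irreplaceable. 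So conservativity is a useful sanity-checking tool inside the argument, not an alternative to the hard part, and your presentation of it as a route that lets one "keep the argument honest" while avoiding Gysin/filtration technology is misleading.
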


All motives have singular and de Rham cohomology and hence periods. We formalise as in \cite[Section~11.2]{period-buch}.

\begin{defn}\label{defn:VV}
Let $\VV$ be the category of tuples $(V_\K,V_\Q,\phi)$ where
$V_\K$ is a  finite dimensional $\K$-vector space, $V_\Q$ a finite dimensional
$\Q$-vector space and $\phi:V_\K\tensor_\K\C\to V_\sing\tensor_\Q\C$ an isomorphism.
Morphisms are given by pairs of linear maps such that the obvious diagram commutes.

Given $V\in \VV$, we define the set of periods of $V$ as
\[ \Per(V)=\im (V_\K\times V_\Q^\vee\to \C),\hspace{2ex} (\omega,\sigma)\mapsto \sigma_\C(\phi(\omega))\]
and the space of periods $\Per\langle H\rangle$ as the additive group generated by it.
If $\Ch\subset \VV$ is a subcategory, we put
\[ \Per(\Ch)=\bigcup_{V\in\VV}\Per(V).\]
\end{defn}

\begin{rem}
\begin{enumerate}
\item If $\Ch$ is additive, then $\Per(\Ch)$ is a $\K$-vector space. 
In particular, $\Per\langle V\rangle$ agrees with $\Per(\langle V\rangle)$ where
$\langle V\rangle$ is the full abelian subcategory of $\VV$ closed under subquotients generated by $V$.

\item If $\Ch$ is an additive tensor category, then $\Per(\Ch)$ is a $\K$-algebra.
\end{enumerate}
\end{rem}

By the universal property of Nori motives, there is a faithful exact functor
\[ \MMN(\K)\to \VV.\]
This defines periods of Nori motives, and by composition with
$M\mapsto \bigoplus H^i_\Nori(M)$ also periods of geometric motives.

\begin{defn}\label{defn:mot_period}A complex number is called an \emph{(effective) motivic period}
if is the period of an (effective) motive.
\end{defn}
Every Nori motive is a subquotient of the image of a geometric motive, hence is
does not matter whether we think of geometric or Nori motives in the above definition.

\begin{prop}[{\cite[Proposition~11.5.3]{period-buch}}]\label{prop:all_mot}The set of effective motivic period agrees with $\Per^\eff$. The 
set of all motivic periods agrees with $\Per$. In particular, both are independent of the choice of $\K$.
\end{prop}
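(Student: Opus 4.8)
The plan is to use Proposition~\ref{prop:all_coh} as a bridge: it already identifies $\Per^\eff$ with the set of effective cohomological periods, so it is enough to match the latter with the effective motivic periods and then to account for the Tate twist when passing to all periods. Throughout I use that the realisation functor $\MMN(\K)\to\VV$, $M\mapsto(H_\dR(M),H_\sing(M),\phi_M)$, is faithful and exact; since $\VV$ is abelian with componentwise kernels and cokernels, both $M\mapsto H_\dR(M)$ and $M\mapsto H_\sing(M)$ are then exact as well.

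\emph{The effective case.} Write $H^i_\Nori(X,Y)$ for the Nori motive of a pair $Y\subseteq X$ of $\K$-varieties in degree $i$. By the construction of the de Rham realisation of Nori motives, its image in $\VV$ is the tuple with $V_\K=H^i_\dR(X,Y)$, $V_\Q=H^i_\sing(X,Y;\Q)$ and $\phi$ the period comparison isomorphism, so the map $(\omega,\sigma)\mapsto\sigma_\C(\phi(\omega))$ of Definition~\ref{defn:VV} for this motive is exactly the period pairing of Definition~\ref{defn:coh_period}; hence $\Per^\eff\subseteq\{\text{effective motivic periods}\}$. For the reverse inclusion, every effective Nori motive $M$ is a subquotient $A/B$ (with $B\subseteq A$) of a finite direct sum $N=\bigoplus_j H^{i_j}_\Nori(X_j,Y_j)$. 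A de Rham class of $M$ lifts to $H_\dR(A)\subseteq H_\dR(N)$ and a functional in $H_\sing(M)^\vee$ pulls back to $H_\sing(A)^\vee$ and extends to $H_\sing(N)^\vee$, both by exactness, and compatibility of the comparison isomorphisms with $A\hookrightarrow N$ and $A\twoheadrightarrow M$ shows that every period of $M$ is a period of $N$. A period of $N$ is a finite sum of periods of the summands $H^{i_j}_\Nori(X_j,Y_j)$, hence a finite sum of effective cohomological periods; as $\Per^\eff$ is a $\Qbar$-algebra it is closed under addition, so this sum lies in $\Per^\eff$. Therefore the effective motivic periods are exactly $\Per^\eff$.

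\emph{The general case.} By Cauchy's integral $2\pi i\in\Per^\eff$, and $\pi=(2\pi i)(2i)^{-1}$ with $(2i)^{-1}\in\Qbar\subseteq\Per^\eff$, so $\Per=\Per^\eff[\pi^{-1}]=\Per^\eff[(2\pi i)^{-1}]$ by Definition~\ref{defn:first}. The motive $\Q(-1)$ (the reduced $H^2$ of $\Pe^1$) is effective with period $2\pi i$, so $\Q(1)\in\MMN(\K)$ is not effective, its realisations are one-dimensional with canonical generators, and its period comparison is multiplication by $(2\pi i)^{-1}$; tensoring a motive with such a rank-one object scales its periods, so $\Per(M'\tensor\Q(k))=(2\pi i)^{-k}\Per(M')$ for every effective $M'$ and $k\ge0$. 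Since by the effective case $\Per^\eff$ is the set of periods of effective motives, this gives $(2\pi i)^{-k}\Per^\eff\subseteq\{\text{motivic periods}\}$, and as $\Per=\bigcup_{k\ge0}(2\pi i)^{-k}\Per^\eff$ we get $\Per\subseteq\{\text{motivic periods}\}$. Conversely, $\MMN(\K)$ is the localisation of $\MMN^\eff(\K)$ at the Lefschetz motive $\Q(-1)$, so any Nori motive $M$ has $M\tensor\Q(n)$ effective for a suitable $n\in\Z$, and then $\Per(M)=(2\pi i)^{n}\Per(M\tensor\Q(n))\subseteq(2\pi i)^{n}\Per^\eff\subseteq\Per$, the last inclusion because $(2\pi i)^{n}\in\Per$ and $\Per$ is a ring containing $\Per^\eff$. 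Hence the motivic periods are exactly $\Per$, and the independence of $\K$ follows from that of $\Per^\eff$ (stated above) and of $\Per$ (Definition~\ref{defn:first}).

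\emph{The main obstacle.} No individual step is deep; the content is organisational. The two points that require care are (i) that the realisation functor $\MMN(\K)\to\VV$ sends $H^i_\Nori(X,Y)$ to the expected relative de Rham and singular cohomology groups together with the period comparison isomorphism --- this \emph{is} the construction of the de Rham realisation of Nori motives and should be invoked from \cite{period-buch} rather than reproved --- and (ii) the description of $\MMN(\K)$ as the localisation of $\MMN^\eff(\K)$ at the Lefschetz motive, used for the general case. Granting these, the only genuine computation is the diagram chase exhibiting a period of a subquotient as a period of the ambient motive, which is immediate from the exactness of the two realisation functors.
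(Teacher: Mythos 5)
Your proof is correct and takes the natural route that the cited reference almost certainly also takes: identify the realisation of $H^i_\Nori(X,Y)$ with the cohomological period datum (giving one inclusion via Proposition~\ref{prop:all_coh}), reduce a general effective Nori motive to a subquotient of a finite direct sum of such objects, and chase lifts and extensions of classes and functionals through the exact realisation functors to see that subquotients contribute no new periods; then handle the non-effective case via the localisation of $\MMN^\eff$ at the Lefschetz motive and the fact that tensoring with $\Q(k)$ rescales periods by $(2\pi i)^{-k}$, together with the observation $\Per^\eff[\pi^{-1}]=\Per^\eff[(2\pi i)^{-1}]$ since $2i$ is an algebraic unit. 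The only points that rely on external input are flagged correctly (the concrete description of the de Rham realisation on $H^i_\Nori(X,Y)$ and the localisation description of $\MMN$), and the diagram chase for subquotients is sound.
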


As a consequence, we see that both $\Per^\eff$ and $\Per$ are $\Qbar$-algebras.
Of course this is also easy to deduce directly from any of the other descriptions.

There is a big advantage of the motivic description: any type of structural result on motives has consequences for periods. One such are the bounds on the spaces of multiple zeta-values by Deligne-Goncharov. Another example are the transcendence results for $1$-motives as discussed below.

In particular, the motivic point of view allows us (or rather Nori) to give a structural reinterpretation of the period conjecture.

\subsection{Comparison}
We conclude by putting all information together:

\begin{thm}[{Proposition~\ref{prop:all_KZ}}, \ref{prop:all_coh}, \ref{prop:all_mot}]The following sets of complex numbers agree:
\begin{enumerate}
\item the set of periods in the sense of Definition~\ref{defn:first};
\item the set of Kontsevich-Zagier periods, see Definition~\ref{defn:KZ}; 
\item the set of cohomological periods, see Definition~\ref{defn:coh_period};
\item the set of all motivic periods, see Definition~\ref{defn:mot_period}.
\end{enumerate}
Moreover, they are independent of the field of definition
$\Q\subset\K\subset\Qbar$ and form a countable $\Qbar$-algebra.
\end{thm}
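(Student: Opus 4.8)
The plan is to deduce the statement entirely from the three comparison propositions cited in the excerpt, treating them as black boxes. Since Proposition~\ref{prop:all_KZ} identifies the effective Kontsevich-Zagier periods with $\Per^\eff$, Proposition~\ref{prop:all_coh} identifies the effective cohomological periods with $\Per^\eff$, and Proposition~\ref{prop:all_mot} identifies \emph{all} motivic periods with $\Per$ (and the effective ones with $\Per^\eff$), the coincidence of the four sets in the statement follows by transitivity, provided one is careful about the single subtlety: items (1)--(3) as literally phrased in the theorem refer to the effective versions, or rather to $\Per = \Per^\eff[\pi^{-1}]$ in each incarnation. First I would observe that each of Definitions~\ref{defn:first}, \ref{defn:KZ}, \ref{defn:coh_period} comes with its own passage $\Per^\eff \rightsquigarrow \Per^\eff[\pi^{-1}]$, and that since $\pi \in \Per^\eff$ (noted right after Definition~\ref{defn:first}, and visibly an effective KZ-period by the Cauchy integral example, and an effective cohomological period via $H^1_\dR(\Gm)$), inverting $\pi$ is the same operation in all three pictures. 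Hence from $\Per^\eff_{(1)} = \Per^\eff_{(2)} = \Per^\eff_{(3)} = \Per^\eff_{\mathrm{mot}}$ one gets the equality of the localisations, which is exactly $\Per$ in each case, and this equals the set of all motivic periods by the second assertion of Proposition~\ref{prop:all_mot}.

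Next I would address the two remaining claims. Countability: each $\gamma_i \colon \Delta^d \to X^\an$ in Definition~\ref{defn:first} is determined up to homotopy by a class in singular homology, and the relevant groups $H^i_\dR(X,Y)$, $H_i^\sing(X,Y;\Q)$ are finite-dimensional $\Qbar$- resp.\ $\Q$-vector spaces attached to a pair $(X,Y)$ over $\Qbar$; there are only countably many such pairs since $\Qbar$ is countable and varieties are finite-type. So $\Per^\eff$ is a countable union of images of countable sets, hence countable, and localising at $\pi$ preserves countability. The $\Qbar$-algebra structure is already recorded: the lemma after the Cauchy example states $\Per^\eff$ is a $\Qbar$-algebra, and the remark after Proposition~\ref{prop:all_mot} (together with the tensor-category remark following Definition~\ref{defn:VV}) gives that $\Per$ is a $\Qbar$-algebra. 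Independence of $\K$ with $\Q \subset \K \subset \Qbar$ is the content of the lemma after the Cauchy example for $\Per^\eff$, and is restated for $\Per$ in Proposition~\ref{prop:all_mot}; alternatively it is transparent from the motivic description since Nori motives over $\K$ and over $\Qbar$ produce the same period set once one allows Artin motives.

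I expect the only genuine point requiring care — the ``main obstacle,'' though it is mild — to be bookkeeping the effective-versus-inverted-$\pi$ distinction consistently across the four definitions, i.e.\ making sure the theorem is read as comparing $\Per$ in each guise rather than comparing $\Per^\eff$ in three guises with $\Per$ in the fourth. Once that is pinned down, no new mathematics is needed: the proof is a one-line chain of the cited equalities together with the already-established structural properties of $\Per^\eff$. Accordingly I would simply write: ``Combine Propositions~\ref{prop:all_KZ}, \ref{prop:all_coh} and \ref{prop:all_mot}; countability and the $\Qbar$-algebra structure follow from the lemma in Section~\ref{ssec:first} after inverting $\pi \in \Per^\eff$, and independence of $\K$ is part of the same lemma and of Proposition~\ref{prop:all_mot}.''
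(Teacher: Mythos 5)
Your proposal is correct and matches the paper's own (implicit) proof, which is exactly the citation of Propositions~\ref{prop:all_KZ}, \ref{prop:all_coh}, \ref{prop:all_mot} in the theorem's heading together with the lemma of Section~\ref{ssec:first} for the $\Qbar$-algebra structure and independence of $\K$; your observation that all three effective sets contain $\pi$ so that inverting it is the same operation in every picture, and your countability remark, are the right (and standard) ways to fill in the details the paper leaves tacit.
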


\section{Formal periods and the period conjecture}

In reverse order to the historic development, we start with Kontsevich's formulation and discuss the Grothendieck conjecture afterwards.

\subsection{Kontsevich's period conjecture}

We take the point of view of cohomological periods. At this point we restrict to $\K=\Qbar$ for simplicity. There are obvious  relations between periods:

\begin{itemize}
\item (bilinearity) Let $\omega_1,\omega_2\in H^i_\dR(X,Y)$, $\alpha_1,\alpha_2\in\Qbar$, $\sigma_1,\sigma_2\in H_i^\sing(X,Y;\Q)$, $a_1,a_2\in\Q$. Then
\[ \int_{\sigma_1+\sigma_2}(\alpha_1\omega_1+\alpha_2\omega_2)=\sum_{i,j=1}^2\alpha_j\int_{\sigma_i}\omega_j.\]
\item (functoriality) For $f:(X,Y)\to (X',Z)$, $\omega'\in H^i_\dR(X',Y')$,
$\sigma\in H_i^\sing(X,Y;\Q)$ we have
\[ \int_{f_*\sigma}\omega'=\int_\sigma f^*\omega'.\]
\item (boundaries) For $Z\subset Y\subset X$, $\partial:H^i_\dR(Y,Z)\to H^{i+1}(X,Y)$, $\delta:H_{i+1}^\sing(X,Y;\Q)\to H_i^\sing(Y,Z;\Q)$, $\omega\in H^i_\dR(Y,Z)$ and $\sigma\in H_{i+1}^\sing(X,Y;\Q)$ we have
\[ \int_{\delta\sigma}\omega=\int_\sigma\partial\omega.\]
\end{itemize}

\begin{defn}[Kontsevich]\label{defn:perf}The space of \emph{formal effective periods} $\Perf^\eff$ is defined
as the $\Q$-vector space generated by symbols 
\[ (X,Y,i,\omega,\sigma)\]
with $X$ an algebraic variety over $\Qbar$, $Y\subset X$ a closed subvariety, 
$i\in\Na_0$, $\omega\in H^i_\dR(X,Y)$, $\sigma\in H_i^\sing(X,Y;\Q)$ modulo
the space of relations generated by bilinearity, functoriality and boundary maps as above.

It is turned into a $\Q$-algebra with the product using the external product
induced by the K\"unneth decomposition
\[ H^*(X,Y)\tensor H^*(X',Y')\isom H^*(X\times X',Y\times X'\cup Y'\times X).\]

Let $2\tilde{\pi}i$ be the class of the symbol 
$(\Gm,\{1\},dz/z,S^1)$. The space of \emph{formal periods} $\Perf$ is
defined as the localisation
\[ \Perf=\Perf^\eff[\tilde{\pi}^{-1}].\]
\end{defn}

\begin{rem}
\begin{enumerate}
\item $\Perf^\eff$ and $\Perf$ are even $\Qbar$-algebras. We define the scalar multiplication by $\alpha(X,Y,i,\omega,\sigma)=(X,Y,i,\alpha\omega,\sigma)$ for
$\alpha\in\Qbar$. 
\item We do not know if it suffices to work with $X$ smooth, $D$ a divisor with normal crossings as Kontsevich does in \cite{kontsevich}. As explained in
 \cite[Remark~13.1.8]{period-buch} these symbols generate the algebra, but is is not clear if they also give all relations. Indeed, Kontsevich only imposes relations in an even more special case.
\item It is not at all obvious that the product is well-defined. This fact  follows from Nori's theory, see \cite[Lemma~14.1.3]{period-buch}.
\end{enumerate}
\end{rem}

There is a natural evaluation map
\[ \Perf^\eff\to\C,\quad (X,Y,i,\omega,\sigma)\mapsto \int_\sigma\omega.\]
By definition its image is the set of effective cohomological periods, hence equal to $\Per^\eff$.
\begin{conj}[Kontsevich]\label{conj:kont}
 the evaluation map
\[ \Perf\to\Per\]
is injective. In other words,
all $\Qbar$-linear relations between 
periods are induced by the above trivial relations.
\end{conj}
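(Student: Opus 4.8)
Conjecture~\ref{conj:kont} is open; the following is the strategy, essentially due to Nori, by which one hopes to approach it. The plan is to recast the asserted injectivity as a genericity statement for a single $\C$-valued point of a torsor under the motivic Galois group, and then to reduce that genericity---case by case---to known transcendence results. First I would use Propositions~\ref{prop:all_coh} and~\ref{prop:all_mot} to identify the image of the evaluation map $\mathrm{ev}\colon\Perf\to\C$ with the period algebra of the Tannakian category $\MMN(\Qbar)$, and then observe that the three families of relations imposed in Definition~\ref{defn:perf} (bilinearity, functoriality, boundaries) are exactly the compatibilities built into the de Rham and singular fibre functors. Following Nori, this lets one identify $\Perf$ with the ring of global functions on the affine \emph{period torsor} $X_{\mot}=\underline{\mathrm{Isom}}^{\otimes}(H_\dR,H_\sing)$, a torsor under (a suitable form of) $G_{\mot}(\Qbar)$; the comparison isomorphism $\phi$ is then a distinguished point $\phi\in X_{\mot}(\C)$, and $\mathrm{ev}$ is ``evaluation at $\phi$''. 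Under this dictionary Conjecture~\ref{conj:kont} becomes: the point $\phi$ is Zariski-generic---equivalently, for every motive $M$ the periods of $M$ generate over $\Qbar$ a field of transcendence degree \emph{equal} to $\dim G_{\mot}(M)$ (the dimension of the Tannakian group of $\langle M\rangle$), and not merely $\le$.

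Granting that reformulation, the argument splits in two. The inequality $\trdeg\le\dim G_{\mot}(M)$ is formal: the periods of $M$ are coordinates of $\phi$ on a torsor under $G_{\mot}(M)$, a variety of dimension $\dim G_{\mot}(M)$, hence they generate a field of transcendence degree at most that; this direction is unconditional and already yields, e.g., the Deligne--Goncharov upper bounds for spaces of multiple zeta values. All the content is in the reverse inequality, i.e.\ in showing $\phi$ is generic, and I would attack it along the dimension filtration of $\MMN(\Qbar)$. For Artin motives the torsor is $\Spec\Qbar$ over $\Q$, and genericity is precisely the torsor property of $\Gal(\Qbar/\Q)\times\Spec\Qbar\to\Spec\Qbar$---ordinary Galois theory. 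For $1$-motives one invokes \cite{huber-wuestholz}: W\"ustholz's analytic subgroup theorem forces every algebraic relation among the relevant periods (of curves, semiabelian varieties, and their extensions) to be ``motivic'', i.e.\ to come from a sub-$1$-motive, which says exactly that the sub-torsor through $\phi$ is everything. Beyond dimension~$1$ there are no unconditional cases.

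The main obstacle is therefore the obvious one: proving genericity of $\phi$ is a pure transcendence problem, and outside dimensions $0$ and $1$ there is essentially no technology. The only general mechanism known to turn a period relation into a sub-motive is the analytic subgroup theorem, which lives on commutative group varieties and hence covers only $1$-motives; a proof of the full conjecture would need a sweeping generalisation of it to arbitrary mixed motives, far out of present reach. What one \emph{can} do---and what the later sections of this note pursue---is to extract the \emph{structural} consequences of the torsor reformulation that require no transcendence input: that the conjecture over an arbitrary intermediate field $\Q\subset\K\subset\Qbar$ is equivalent to the case $\K=\Q$, and that full faithfulness of the realisation functor $M\mapsto(H_\dR(M),H_\sing(M),\phi)$ into $\VV$ is a formal consequence, which in turn links the problem to Hodge- and Tate-type fullness conjectures such as the one of \cite{ABB}. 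A genuine proof of the injectivity of $\Perf\to\Per$, however, is not on offer; the statement remains wide open.
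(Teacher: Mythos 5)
You have correctly recognized that Conjecture~\ref{conj:kont} is not a theorem of this paper---it is an open conjecture, and the paper offers no proof. Your description of the surrounding strategy (identify $\Perf$ with global functions on a torsor under the motivic Galois group, reinterpret injectivity of $\mathrm{ev}$ as genericity of the comparison point, and reduce known cases to classical Galois theory in dimension~$0$ and to the analytic subgroup theorem via \cite{huber-wuestholz} in dimension~$1$) matches Sections~4--7 of the paper closely, and your account of the structural by-products (independence of $\K$, full faithfulness of the realisation functor, the link to \cite{ABB}) is the same set of consequences the paper records.

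Two small points of calibration against the paper's presentation. First, in the equivalence with Grothendieck's formulation the paper insists not only that $\mathrm{ev}\in X(M)(\C)$ be generic but also that $X(M)$ be \emph{connected}; since $X(M)$ is smooth, connectedness forces integrality and hence uniqueness of the generic point, which is what makes the equivalence with the transcendence-degree statement go through. Your summary drops the connectedness clause, which is a genuine part of the conjecture in the mixed setting. Second, the paper is careful to develop the weaker notion of a \emph{semi-torsor} precisely so that the formal-period machinery applies to additive subcategories of motives that are not closed under tensor product (notably $1\mathrm{-Mot}$); your account works exclusively with the Tannakian torsor $\underline{\mathrm{Isom}}^{\otimes}(H_\dR,H_\sing)$, which is the special case $\Ch=\langle M\rangle^{\otimes,\vee}$ and misses the reason Nori's approach is flexible enough to give an unconditional result in the $1$-motive case. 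Neither point is a ``gap'' in a proof---there is no proof to have a gap in---but they are worth keeping straight if you want your summary to reflect what the paper actually establishes.
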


\subsection{Categorical and motivic description}\label{ssec:form_cat}
The above description is ``minimalistic'' in using a small set of generators and relations. The downside is that it is somewhat arbitrary. We now explain alternative descriptions. Let again $\K\subset\Qbar$ be a subfield. Recall that we have fixed an embedding $\Qbar\subset\C$.

\begin{defn}Let $\Ch$ be an additive category, $T:\Ch\to\VV$ an additive functor. We define the \emph{space of formal periods of $\Ch$} as the
$\Q$-vector space $\Perf(\Ch)$ generated by symbols
$(M,\sigma,\omega)$ with $M\in\Ch$, $\sigma\in T(M)_\Q^\vee$, $\omega\in T(M)_\K$ with relations
\begin{itemize}
\item bilinearity in $\sigma$ and $\omega$, 
\item functoriality in $M$.
\end{itemize}
\end{defn}

We are going to apply this mainly to subcategories of
$\MMN(\K,\Q)$ or $\DMgm(\K,\Q)$. 

\begin{thm}\label{thm:indep}
\[ \Perf(\DMgm(\K,\Q))=\Perf(\MMN(\K,\Q))=\Perf,\]
independent of $\K$.
\end{thm}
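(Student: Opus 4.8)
The plan is to prove the two equalities separately, each by exhibiting mutually inverse maps (or at least surjections in both directions) between the relevant spaces of formal periods, and then to note that independence of $\K$ follows once the middle term is identified with the $\K$-free object $\Perf$ of Definition~\ref{defn:perf}.

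First I would treat $\Perf(\MMN(\K,\Q)) = \Perf$. The generators of $\Perf$ are symbols $(X,Y,i,\omega,\sigma)$, and to each such datum Nori's construction attaches an object $H^i_\Nori(X,Y) \in \MMN(\K,\Q)$ together with $\omega \in H^i_\dR = T(H^i_\Nori(X,Y))_\K$ and $\sigma \in H_i^\sing = T(H^i_\Nori(X,Y))_\Q^\vee$; this sends a generator of $\Perf$ to a generator of $\Perf(\MMN(\K,\Q))$. One checks that bilinearity goes to bilinearity, and that the functoriality and boundary relations in $\Perf$ are special cases of functoriality of $M \mapsto H^i_\Nori(M)$ in $\MMN(\K,\Q)$ (the boundary map being induced by the morphism of motives coming from the triangle associated to $Z \subset Y \subset X$). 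Hence we get a well-defined $\Q$-linear (indeed $\Qbar$-linear when $\K = \Qbar$) map $\Perf \to \Perf(\MMN(\K,\Q))$. For the reverse map, every object $M$ of $\MMN(\K,\Q)$ is by construction a subquotient of some $H^i_\Nori(X,Y)$; choosing such a presentation and using bilinearity and functoriality one rewrites any symbol $(M,\sigma,\omega)$ in terms of symbols coming from the $(X,Y,i,\omega,\sigma)$, giving a surjection $\Perf \to \Perf(\MMN(\K,\Q))$, and one verifies the two composites are the identity. The main technical point here is that the rewriting is independent of the chosen presentation --- this is exactly the kind of diagram-chase that the relations in $\Perf(\MMN(\K,\Q))$ are designed to absorb, and it is also implicitly what makes the product on $\Perf$ well-defined via Nori's theory (cf.\ \cite[Lemma~14.1.3]{period-buch}).

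Next I would treat $\Perf(\DMgm(\K,\Q)) = \Perf(\MMN(\K,\Q))$ using the functor $R: \DMgm(\K,\Q) \to D^b(\MMN(\K,\Q))$ of the Nori--Harrer theorem, composed with $M \mapsto \bigoplus_i H^i(R(M))$, which is the additive functor $T$ on $\DMgm$ used to define its periods. Since $\DMgm^\eff(\Qbar)$ contains the motives $M(X)$ of varieties and $H^i$ of $R(M(X))$ recovers (dually) $H^i_\Nori(X)$, the map on formal periods is at least surjective onto $\Perf(\MMN(\K,\Q))$; conversely every Nori motive being a subquotient of some $H^i_\Nori(X,Y)$ sits in the image of $H^i(R(-))$ applied to a geometric motive, so one again rewrites symbols $(M,\sigma,\omega)$ with $M$ a Nori motive in terms of symbols with $M$ geometric, using bilinearity and functoriality, to get a surjection back. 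Compatibility with the singular realisation (part of the Nori--Harrer statement) ensures these identifications are compatible with evaluation to $\C$, so they match the periods correctly.

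Finally, independence of $\K$: the presentation of $\Perf$ in Definition~\ref{defn:perf} uses varieties over $\Qbar$ and $\omega \in H^i_\dR(X,Y)$ a $\Qbar$-vector space, but the same generators-and-relations description with $\K$ in place of $\Qbar$ gives the same $\Q$-algebra because $H^i_\dR(X_\K, Y_\K) \otimes_\K \Qbar = H^i_\dR(X_{\Qbar}, Y_{\Qbar})$ and every variety over $\Qbar$ descends to some finite (hence to some chosen) subfield; bilinearity in $\omega$ then identifies the $\K$-version with the $\Qbar$-version. I expect the main obstacle to be the well-definedness/independence-of-presentation issue in the two rewriting arguments --- i.e.\ checking that passing through a subquotient presentation of a Nori motive, or through $R$ applied to a geometric motive, lands in a well-defined element --- since this is where one genuinely needs the structure of Nori's construction rather than formal manipulation; everything else is bookkeeping with bilinearity and functoriality.
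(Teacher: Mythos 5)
The paper's proof (in the subsection on the case of motives) takes a completely different route from yours: it works entirely at the level of the coalgebra $\Ah$ and then transfers to $\Perf$ via the semi-torsor structure. Concretely, it defines $\Ah_\Nori(\K)$ by the same symbols-and-relations recipe as $\Perf$ but with $H^i_\dR$ replaced by $H^i_\sing$, identifies this coalgebra with Nori's diagram coalgebra (so $\Ah_\Nori(\K) \cong \Ah(\MMN(\K))$), cites the comparison $\Ah_\Nori(\K) \cong \Ah(\DMgm(\K))$ from the book, and then uses the isomorphism $\Perf(\Ch)_\C \cong \Ah(\Ch)_\C$ coming from the period pairing to push these identifications to the formal period spaces. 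This is not just a stylistic difference: the rewriting/independence-of-presentation issue that you correctly flag as the main technical obstacle in your direct generators-and-relations argument is exactly what Nori's universal-property machinery is designed to absorb, and the paper leans on that machinery (via Proposition~\ref{prop:make_ab}) rather than attempting the diagram chase by hand.

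The serious gap is in your independence-of-$\K$ argument. The observation that every $\Qbar$-variety descends to a finite subfield, together with bilinearity in $\omega$, only shows that the base-change map $\Perf(\K) \to \Perf(\Qbar)$ is surjective. It does not show injectivity: the relation space over $\Qbar$ contains functoriality relations coming from morphisms of $\Qbar$-varieties (Galois conjugates of subvarieties, for instance) that do not descend to $\K$, so it is not clear that a relation holding after base change already held over $\K$. The paper explicitly points out that the book's original argument at this spot (Proposition~3.1.11 of \cite{period-buch}) has precisely this defect --- it only proves surjectivity --- and then supplies a genuinely new argument: reduce to $K/k$ finite Galois, use the exact sequence $0 \to G_\mot(K) \to G_\mot(k) \to \Gal(K/k) \to 0$, observe that $X(k) \to \Spec(k)$ factors through $\Spec(K)$ because $\Qbar \subset \Perf(k)$ via Artin periods, and decompose $X(k)_K \cong \coprod_{\sigma \in \Gal(K/k)} X'(k)$ to identify $X'(k) \cong X(K)$ using the torsor structure. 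Your proposal has no substitute for this step, so the claimed independence of $\K$ is not established.
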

\begin{proof} This is only implicit in  \cite{period-buch}. It is a consequence of Galois theory of periods as discussed in the next section. We defer the proof. 
\end{proof}

\begin{rem}In particular, the validity of the period conjecture does not depend on $\K\subset \Qbar$. This is the reason that it was safe to
consider only $\K=\Qbar$ in Definition~\ref{defn:perf}.
\end{rem}

The evaluation map $\VV\to\C$ induces linear maps
\[ \Perf(\Ch)\to\C.\]
We denote the image by $\Per(\Ch)$.  

\begin{defn}We say that the period conjecture holds for $\Ch$ if
the map $\Perf(\Ch)\to\Per(\Ch)$ is injective.
\end{defn}
In the case $\Ch=\MMN(\K,\Q)$, we get back Kontsevich's conjecture \ref{conj:kont}.
We are going to study the relation between the period conjecture for $\Ch$ and subcategories of $\Ch$ below using a Galois theory for formal periods.

\section{Galois theory of (formal) periods}

	\subsection{Torsors and semi-torsors}
\begin{defn}\label{defn:semi}Let $k$ be a field.
\begin{enumerate}
\item 
 A \emph{torsor} in the category of affine $k$-schemes is an affine
scheme $X$ with an operation of an affine $k$-group $G$
\[ G\times X\to X\]
such that there is an affine $S$ over $k$ and a point $x\in X(S)$ such that the 
morphism 
\[ G_S\to X_S,\quad g\mapsto gx\]
induced by the operation
is an isomorphism.
\item An \emph{semi-torsor} over $k$ is a $k$-vector space $V$ together with a coalgebra $A$ and a right co-operation
\[ V\to V\tensor A\]
such there is a a commutative $k$-algebra $K$ and a $k$-linear map $V\to K$ such that the map \[ V_K\to A_K\]
induced by the comodule structure
is an isomorphism.
\end{enumerate}
\end{defn}

\begin{rem}\label{rem:stor}
\begin{enumerate}
\item 
The isomorphism makes $X$ a torsor in the fpqc-to\-po\-logy because $k\to K$ is faithfully flat. See also \cite[Section~1.7]{period-buch} for an extended discussion of different notions of torsors.
\item \label{it:stor_tor}
The second notion is newly introduced here. A torsor gives rise to a semi-torsor by taking global sections. A semi-torsor is a torsor, if, in addition,
$A$ is a Hopf algebra, $V$ a $k$-algebra and the structure map is a ring homomorphism. Hence a semi-torsor only has half of the information of a torsor. Note that an operation $G\times X\to X$ that gives rise to a semi-torsor, is already a torsor because an algebra homomorphism which is a vector space isomorphism is also an algebra isomorphism.
\item If $B$ is a finite dimensional $k$-algebra, $N$ a left-$B$-module that is fpqc-locally free of rank $1$ (i.e., here is  a commutative $k$-algebra $K$ such that $K\tensor_kN$ is free of rank $1$), then $N^\vee:=\Hom_k(N,k)$ is a semi-torsor over the coalgebra $B^\vee$.
\item Once there is one $x$ satisfying the torsor condition, all elements of $X(S)$ for all $S$ will satisfy the condition. This is not the case for semi-torsors.
\end{enumerate}
\end{rem}

\subsection{Galois theory revisited}\label{ssec:galois}
As promised in the introduction, we want to formulate Galois theory in torsor language.

Let $\Gamma=\Gal(\Qbar/\Q)$. It is profinite, hence proalgebraic where we view abstract finite groups as algebraic groups over $\Q$. Indeed, we identify a 
finite group $G$ with the algebraic group given by $\Spec( \Q[G]^\vee)$. 
Let $X_0=\Spec(\Qbar)$. We view it as a pro-algebraic variety over $\Q$.
There is a natural operation
\[ \Gamma\times X_0\to X_0.\]
We describe it on finite level. Let $K/\Q$ be Galois. Then
\[ \Gal(K/\Q)\times \Spec (K)\to \Spec( K)\]
is given by the ring homomorphism
\[ \Q[\Gal(K/\Q)]^\vee\tensor K\leftarrow K\]
adjoint to
\[ K\leftarrow \Q[\Gal(K/\Q)]\tensor K\]
induced by the defining operation of the abstract group $G(K/\Q)$ on $K$.

\begin{thm}[Galois theory]
The space $X_0$ is a torsor under $\Gamma$. More precisely,
the choice $\id\in X_0(\Qbar)$ induces an isomorphism
\[ \Gamma_\Qbar\to (X_0)_\Qbar.\]
\end{thm}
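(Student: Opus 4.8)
The plan is to reduce the statement to the finite-level torsor claim and then verify that claim by a direct computation with Galois algebras. First I would observe that both $\Gamma$ and $X_0$ are pro-objects: writing $\Qbar = \varinjlim_K K$ over finite Galois extensions $K/\Q$, we have $X_0 = \varprojlim_K \Spec(K)$ and $\Gamma = \varprojlim_K \Gal(K/\Q)$, and the operation $\Gamma \times X_0 \to X_0$ is the inverse limit of the finite operations described in the excerpt. Since a filtered inverse limit of isomorphisms of affine schemes is an isomorphism, it suffices to show that for each finite Galois $K/\Q$, the chosen point $\id_K \in \Spec(K)(K)$ (the map $K \to K$, the identity) induces an isomorphism $\Gal(K/\Q)_K \xrightarrow{\sim} \Spec(K)_K$.

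Next I would make this concrete on rings. Both sides, after base change to $K$, are $\Spec$ of $K$-algebras: the right-hand side is $\Spec(K \tensor_\Q K)$ and the left-hand side is $\Spec\big( \Q[\Gal(K/\Q)]^\vee \tensor_\Q K \big)$. The orbit map $g \mapsto g \cdot \id_K$ corresponds to a $K$-algebra homomorphism
\[ K\tensor_\Q K \longrightarrow \Q[\Gal(K/\Q)]^\vee \tensor_\Q K, \]
which by adjunction comes from the map $K \tensor_\Q K \to \Q[\Gal(K/\Q)] \tensor_\Q K$ given by the defining action, i.e. $a \tensor b \mapsto \sum_{g \in \Gal(K/\Q)} g \tensor (g(a) b)$ (dualizing appropriately). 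The key step is then the classical fact — essentially the normal basis theorem, or equivalently linear independence of characters (Dedekind/Artin) — that the natural map
\[ K \tensor_\Q K \longrightarrow \prod_{g \in \Gal(K/\Q)} K, \qquad a \tensor b \mapsto (g(a)b)_g, \]
is an isomorphism of $K$-algebras. I would identify $\Q[\Gal(K/\Q)]^\vee \tensor_\Q K$ with $\prod_{g} K$ (the dual group algebra, base-changed, is the algebra of $K$-valued functions on the finite set $\Gal(K/\Q)$) and check that the orbit map is precisely this isomorphism.

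I would then pass to the limit: the individual isomorphisms $\Gal(K/\Q)_K \to \Spec(K)_K$ are compatible with the transition maps as $K$ grows (this is a routine naturality check, using that the normal-basis isomorphisms are functorial in $K$), so they assemble to the desired isomorphism $\Gamma_\Qbar \to (X_0)_\Qbar$ of pro-algebraic $\Qbar$-schemes. Finally, I would note that the torsor axiom in Definition~\ref{defn:semi} is satisfied with $S = \Spec(\Qbar)$ and $x = \id \in X_0(\Qbar)$. The main obstacle is the bookkeeping in the second paragraph: getting the duality conventions right so that the abstractly-defined orbit map $g \mapsto gx$ literally becomes the normal-basis isomorphism, and making sure the pro-structure is handled cleanly (in particular that $\Qbar \tensor_\Q \Qbar$ — really the relevant colimit of the $K\tensor_\Q K$ — behaves as expected). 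The underlying mathematical input, linear independence of characters, is entirely standard.
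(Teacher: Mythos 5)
Your argument is correct and in substance the same as the paper's: both reduce to finite Galois $K/\Q$ and hinge on the separability of $K$, expressed as a decomposition of a tensor product ($K\tensor_\Q K \isom \prod_{\Gal(K/\Q)} K$ for you, $K\tensor\Qbar\isom\Qbar[\Hom(K,\Qbar)]^\vee$ in the paper) plus the simple transitivity of the Galois action. The only organizational difference is that you first establish the isomorphism already over $K$ and then pass to $\Qbar$ and to the limit, whereas the paper base-changes directly to $\Qbar$; these are equivalent, and one small nit is that the input you need is linear independence of characters (Dedekind/Artin), which is weaker than — not ``equivalent to'' — the normal basis theorem.
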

\begin{proof}
We check the claim on the isomorphism on finite level.
Let $K/\Q$ be a finite Galois extension. The projection $X_0\to \Spec(K)$
amounts to fixing an embedding $K\subset\Qbar$.

We want to study the base change of
\[\Gal(K/\Q)\times \Spec(K)\to\Spec(K)\]
to $\Qbar$. On rings, this is
\begin{equation}\label{eq1} \Qbar[\Gal(K/\Q)]^\vee\tensor_\Qbar (K\tensor\Qbar)\leftarrow K\tensor\Qbar.\end{equation}
On the other hand, consider the map 
\[ K\tensor \Qbar\to \mathrm{Map}(\Hom(K,\Qbar),\Qbar)\isom \Qbar[\Hom(K,\Qbar)]^\vee,\quad \lambda\tensor a\mapsto (\sigma\mapsto a\sigma(\lambda)).\]
It is Galois equivariant. As $K/\Q$ is separable, it is an isomorphism.
Hence our map (\ref{eq1}) can be written as
\begin{equation}\label{eq2}
\Qbar[\Gal(K/\Q)]^\vee\tensor \Qbar[\Hom(K,\Qbar)]^\vee\leftarrow \Qbar[\Hom(K,\Qbar)]^\vee.\end{equation}
It is obtained by duality from the right operation
\begin{equation} \Gal(K/\Q)\times\Hom(K/\Qbar)\to\Hom(K/\Qbar).\end{equation}
By Galois theory, this operation is simply transitive. By evaluating
on the fixed inclusion $K\subset \Qbar$, we get a bijection
\[  \Gal(K/\Q)\to\Hom(K,\Qbar)\]
translating to the isomorphism
\[ \Qbar[\Gal(K/\Q]^\vee\leftarrow K\tensor \Qbar\]
that we needed to show.
\end{proof}

\subsection{Nori's semi-torsor}
We return to the situation of Section~\ref{ssec:form_cat}. We want to explain how formal periods are a semi-torsor or even torsor.

Let $\K\subset \C$,
$\Ch$ an additive category,
\[ T:\Ch\to \VV,\quad M\mapsto (T(M)_\K, T(M)_\Q,\phi_M)\]
an additive functor. From now on we write
\[ T_\dR(M):= T(M)_\K, \hspace{2ex}T_\sing(M):=T(M)_\Q.\]
Both $T_\dR$ and $T_\sing$ are additive functors to $\K\Vect$ and $\Q\Vect$, respectively. In the Tannakian spirit we call
them \emph{fibre functors}.

We first give an alternative description of the formal period space. For a fixed $M$, the $\Q$-vector space generated by the symbols $(M,\sigma,\omega)$ with the relation of bilinearity is nothing but
\begin{multline*}
 T_\dR(M)\tensor_\Q T_\sing(M)^\vee\isom T_\dR(M)\tensor_\K T_\sing^\vee(M)_\K\\
\isom \Hom_\K(T_\dR(M),T_\sing(M)_\K)^\vee.
\end{multline*}
Hence we have an alternative description of the space of formal periods
\[ \Perf(\Ch)=\left(\bigoplus_{M\in\Ch} \Hom_\Qbar(T_\dR(M),T_\sing(M))^\vee\right)/\text{functoriality}.\]
Note that the $\Q$-sub vector space of relations induced by functoriality is 
even a $\K$-vector space. This makes clear that $\Perf(\Ch)$ is a $\K$-vector space. 

In the next step, we introduce the coalgebra (or even affine $\K$-group) under which the
formal period space is a semi-torsor (or even torsor). Actually, its definition is parallel to the definition of the formal period space, but with one fibre functor instead of two.

\begin{defn}For $M\in\Ch$ put
\[ \Ah(M)=\End_\Q(T_\sing(M))^\vee\]
and
\[ \Ah(\Ch)=\left(\bigoplus_{M\in\Ch}\Ah(M)\right)/\text{functoriality}.\]
\end{defn}
Note that $\Ah(M)$ and $\Ah(\Ch)$ are $\Q$-coalgebras. 

\begin{rem}This object is at the very heart of Nori's work. It is an
 explicit description of the coalgebra that Nori  attaches to the additive functor $T_\sing:\Ch\to  \Q\Vect$. 
\end{rem}

We extend scalars to
$\K$:
\[ \Ah(\Ch)_\K=\left(\bigoplus_{M\in\Ch}\End_\K(T_\sing(M)_\K)\right)^\vee/\text{functoriality}.\]

The left-module structure
\begin{multline*}
 \End_\K(T_\sing(M)_\K,T_\sing(M)_\K)\times \Hom_\K(T_\dR(M),T_\sing(M)_\K)\\
\longrightarrow \Hom_\K(T_\dR(M),T_\sing(M)_\K)
\end{multline*}
induces a right-comodule structure 
\[ \Hom_\K(T_\dR(M),T_\sing(M)_\K)^\vee\to \Ah(M)_\K\tensor \Hom_\K(T_\dR(M),T_\sing(M)_\K)^\vee \]
and hence
\[ \Perf(\Ch)\to\Ah(\Ch)_\K\tensor_\K\Perf(\Ch)\isom \Ah(\Ch)\tensor_\Q\Perf(\Ch).\]
Recall that $T$ takes values in $\VV$, hence we have a distinguished 
isomorphism of functors $\phi_T:T_\dR(M)_\C\to T_\sing(M)_\C$. It is an element
of $\Hom_\C(T_\dR(M)_\C,T_\sing(M)_\C)$ or equivalently a linear map
\[ \Hom_\K(T_\dR(M),T_\sing(M)_\K)^\vee \to\C.\] Hence, this is precisely the data that we need for a semi-torsor. Indeed, because $\phi_T$ is an isomorphism,
 it induces
an isomorphism
\[ \Perf(\Ch)_\C\to \Ah(\Ch)_\C.\]
We have shown:
\begin{prop}The space $\Perf(\Ch)$ is a $\K$-semi-torsor over $\Ah(\Ch)_\K$ in the sense of Definition~\ref{defn:semi}.
\end{prop}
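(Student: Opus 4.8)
The plan is to verify the two defining conditions of a $\K$-semi-torsor (Definition~\ref{defn:semi}(2)) directly from the constructions just made: first that $\Ah(\Ch)$ is a $\K$-coalgebra and $\Perf(\Ch)$ carries a right co-operation over it, and second that there is a commutative $\K$-algebra $K$ (here $K=\C$) together with a $\K$-linear map $\Perf(\Ch)\to\C$ whose induced base-changed map $\Perf(\Ch)_\C\to\Ah(\Ch)_\C$ is an isomorphism.

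First I would assemble the coalgebra structure. For each $M$, the vector space $\End_\Q(T_\sing(M))^\vee$ is the linear dual of a finite-dimensional associative unital $\Q$-algebra, hence a $\Q$-coalgebra; passing to the colimit (quotient by functoriality) over $\Ch$ is a filtered-type construction that preserves coalgebras, so $\Ah(\Ch)$ is a $\Q$-coalgebra and $\Ah(\Ch)_\K$ a $\K$-coalgebra, as already noted in the excerpt. Next, for fixed $M$, the composition action of $\End_\K(T_\sing(M)_\K)$ on $\Hom_\K(T_\dR(M),T_\sing(M)_\K)$ makes the latter a left module over a finite-dimensional $\K$-algebra; dualising turns this into a right comodule over $\Ah(M)_\K$, as displayed. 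I would then check that these comodule structures are compatible with the functoriality relations on both sides — this is the routine but slightly fiddly bookkeeping step: a morphism $f\colon M\to M'$ in $\Ch$ induces compatible maps on $T_\dR$, $T_\sing$ and hence on $\End$ and $\Hom$ spaces, and one must see that the relation $(M',f_*\sigma,\omega)=(M,\sigma,f^*\omega)$ is respected by $\rho$. Passing to the quotient and then tensoring over $\K$, using the canonical identification $\Ah(\Ch)_\K\tensor_\K\Perf(\Ch)\isom\Ah(\Ch)\tensor_\Q\Perf(\Ch)$, yields the co-operation $\rho\colon\Perf(\Ch)\to\Ah(\Ch)\tensor_\Q\Perf(\Ch)$.

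Second, I would produce the trivialising point. The functor $T$ lands in $\VV$, so for every $M$ we have the period isomorphism $\phi_M\colon T_\dR(M)_\C\xrightarrow{\sim}T_\sing(M)_\C$; regarded as an element of $\Hom_\C(T_\dR(M)_\C,T_\sing(M)_\C)$ it defines a $\C$-linear functional on $\Hom_\K(T_\dR(M),T_\sing(M)_\K)^\vee$, compatibly with functoriality (because $\phi$ is a morphism of functors), hence a $\K$-linear map $\mathrm{ev}\colon\Perf(\Ch)\to\C$. To finish I must show the induced map $\Perf(\Ch)_\C\to\Ah(\Ch)_\C$ is an isomorphism. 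On each summand before taking the quotient, this is the map $\Hom_\C(T_\dR(M)_\C,T_\sing(M)_\C)^\vee\to\End_\C(T_\sing(M)_\C)^\vee$ dual to $g\mapsto g\circ\phi_M^{-1}$, which is an isomorphism precisely because $\phi_M$ is invertible. I would then argue that these isomorphisms are compatible with the functoriality relations, so they descend to an isomorphism on the quotients; since $-\tensor_\K\C$ is exact, the quotient of isomorphisms is an isomorphism. This matches exactly Definition~\ref{defn:semi}(2) with $K=\C$, completing the proof.

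The main obstacle I anticipate is not the linear algebra on individual objects — each such step is a one-line check — but the verification that all structures (comodule map, trivialisation, and the claimed isomorphism) are genuinely well-defined on the quotient by the functoriality relations, i.e.\ that one may work summand-by-summand and then descend. This requires being careful that the functoriality subspace on the $\Perf$ side really is a $\K$-subspace (already observed in the excerpt) and that it is carried into the corresponding functoriality subspace of $\Ah(\Ch)\tensor_\Q\Perf(\Ch)$ under $\rho$; equivalently, that Nori's diagram category construction behaves well here. One clean way to avoid re-deriving this is to invoke that $\Ah(\Ch)$ is by construction the coalgebra Nori attaches to $T_\sing$ and that $\Perf(\Ch)$ is the analogous object for the pair of fibre functors, so the module/comodule compatibility is part of that formalism; but since the excerpt develops everything by explicit generators and relations, I would spell out the descent argument rather than cite it.
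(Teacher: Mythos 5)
Your proposal is correct and follows essentially the same construction as the paper, which states this proposition as the conclusion of the immediately preceding text (the alternative description of $\Perf(\Ch)$ as $\bigoplus_M\Hom_\K(T_\dR(M),T_\sing(M)_\K)^\vee/\text{functoriality}$, the comodule structure induced by the left-module structure of $\End$ on $\Hom$, and the trivialisation $\Perf(\Ch)\to\C$ coming from $\phi_T$, which is an isomorphism after base change to $\C$). One small slip in your summand-level description: the map $\Perf(\Ch)_\C\to\Ah(\Ch)_\C$ is dual to $\End_\C(T_\sing(M)_\C)\to\Hom_\C(T_\dR(M)_\C,T_\sing(M)_\C)$, $e\mapsto e\circ\phi_M$, rather than ``dual to $g\mapsto g\circ\phi_M^{-1}$'' as you wrote (which would be a map in the opposite direction), though both formulations give the same isomorphism criterion.
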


\begin{defn}Let $\CDTarg{\Ch}{T_\sing}$ be the category of $\Ah(\Ch)$-comodules finite dimensional over $\Q$.
\end{defn}
This is nothing but Nori's diagram category, see \cite[Section~7.1.2]{period-buch}.

\begin{thm}[Nori, see {\cite[Theorem~7.1.13]{period-buch}}] There is a natural factorisation
\[ T_\sing:\Ch\to\CDTarg{\Ch}{T_\sing}\xrightarrow{\tilde{T}_\sing}\Q\Vect\]
with $\CDTarg{\Ch}{T_\sing}$ abelian and $\Q$-linear, the functor $\tilde{T}$ faithful and exact, and the category is universal with this property.
\end{thm}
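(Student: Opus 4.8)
The plan is to follow Nori's construction as presented in \cite[Chapter~7]{period-buch}, in three stages: first set up the coalgebra $\Ah(\Ch)$ together with its comodule category, then build the factorising functor, and finally establish the universal property by reduction to finite subdiagrams. For the first stage I would check that $\Ah(\Ch)$ is a coassociative, counital $\Q$-coalgebra: on each summand $\Ah(M)=\End_\Q(T_\sing(M))^\vee$ the comultiplication is dual to composition $\End_\Q(T_\sing(M))\tensor\End_\Q(T_\sing(M))\to\End_\Q(T_\sing(M))$ and the counit is dual to $\id\mapsto\id$, and one verifies that the subspace of functoriality relations is a coideal, so these structures descend to $\Ah(\Ch)$. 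Then I would invoke the standard fact that for any $\Q$-coalgebra the finite-dimensional right comodules form an abelian $\Q$-linear category in which kernels and cokernels are computed on underlying vector spaces, so that the forgetful functor $\tilde T_\sing$ to $\Q\Vect$ is exact and faithful; finite-dimensionality is stable under subquotients because every finite-dimensional subspace of a comodule is contained in a finite-dimensional subcomodule.

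For the factorisation: given $M\in\Ch$, the evaluation $\End_\Q(T_\sing(M))\tensor T_\sing(M)\to T_\sing(M)$ dualises to a map $T_\sing(M)\to\Ah(M)\tensor T_\sing(M)$, and composing with the canonical $\Ah(M)\to\Ah(\Ch)$ endows $T_\sing(M)$ with the structure of a finite-dimensional $\Ah(\Ch)$-comodule. Functoriality of $M\mapsto\Ah(M)$ shows that a morphism in $\Ch$ induces a comodule homomorphism, giving the functor $\Ch\to\CDTarg{\Ch}{T_\sing}$; its composition with $\tilde T_\sing$ is $T_\sing$ on the nose, since the underlying vector space of the comodule attached to $M$ is $T_\sing(M)$ itself.

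For universality, suppose $\mathcal B$ is an abelian $\Q$-linear category, $F\colon\mathcal B\to\Q\Vect$ is faithful and exact, and $S\colon\Ch\to\mathcal B$ satisfies $F\circ S=T_\sing$. I would first reduce to the case of a finite diagram by writing $\Ch$ as the filtered union of its finite subdiagrams $\Ch'$: then $\Ah(\Ch)$ is the filtered colimit of the finite-dimensional coalgebras $\Ah(\Ch')$, every finite-dimensional $\Ah(\Ch)$-comodule descends to some finite level, and $\CDTarg{\Ch}{T_\sing}$ is the corresponding colimit of the $\CDTarg{\Ch'}{T_\sing}$, so it suffices to produce the comparison functor compatibly at each finite level. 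For finite $\Ch'$ the coalgebra $\Ah(\Ch')$ is finite-dimensional and $\CDTarg{\Ch'}{T_\sing}$ is equivalent to the category of finite-dimensional modules over the finite-dimensional algebra $E:=\Ah(\Ch')^\vee$, which is identified with $\End(T_\sing|_{\Ch'})$ acting on $\bigoplus_{M\in\Ch'}T_\sing(M)$. The composite $F\circ S$ realises $E$ as the endomorphism ring of the finite-length object $\bigoplus_{M\in\Ch'}S(M)$ in $\mathcal B$; a Gabriel–Freyd-type representation theorem for finite-length $\Q$-linear abelian categories then produces an exact, faithful functor $E\Mod\to\mathcal B$ restricting to $S$ on the image of $\Ch'$, unique up to unique isomorphism. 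Passing back to the colimit over $\Ch'$ yields the general statement.

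The genuinely non-formal input — and the step I expect to be the main obstacle — is this last representation theorem: identifying a finite-length $\Q$-linear abelian category equipped with a faithful exact functor to $\Q\Vect$ with a module category over the corresponding endomorphism algebra, together with the bookkeeping needed to make the resulting functor natural in the finite subdiagram, to assemble the colimit, and to verify the uniqueness clause. Everything else is a matter of unwinding the (co)linear algebra.
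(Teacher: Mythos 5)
The paper does not prove this theorem; it simply cites \cite[Theorem~7.1.13]{period-buch}, so there is no internal argument to compare against. Your sketch follows the same overall plan as the proof in that reference (construct the coalgebra $\Ah(\Ch)$ as a filtered colimit of $\End(T_\sing|_{\Ch'})^\vee$ over finite subdiagrams, take finite-dimensional comodules, reduce the universal property to the finite case), and the first two stages are fine. The error is in the crucial third stage.

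You claim that $F\circ S$ ``realises $E$ as the endomorphism ring of the finite-length object $\bigoplus_{M\in\Ch'}S(M)$ in $\mathcal{B}$.'' It does not. The algebra $E=\End(T_\sing|_{\Ch'})$ consists of \emph{diagram endomorphisms} of the representation $T_\sing|_{\Ch'}$ inside $\Q\Vect$ and is unrelated to $\End_{\mathcal{B}}\bigl(\bigoplus_{M}S(M)\bigr)$. For a one-object diagram $\Ch'=\{M\}$ with $\dim_\Q T_\sing(M)=2$ one has $E=M_2(\Q)$; but if $\mathcal{B}=\MHS$ and $S(M)=\Q(0)\oplus\Q(1)$, then $\End_{\mathcal{B}}(S(M))=\Q\times\Q$, which is not $M_2(\Q)$. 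Moreover $\bigoplus_M S(M)$ is in general not a projective generator of the abelian subcategory it generates, so even with the correct algebra the Morita-type reasoning you invoke would not apply directly. The step you flag as ``the main obstacle'' is indeed the substance of Nori's proof, but it has a different shape: one sets $\langle S(\Ch')\rangle\subset\mathcal{B}$ to be the abelian subcategory generated by the $S(M)$ under subquotients and finite sums, and $R:=\End\bigl(F|_{\langle S(\Ch')\rangle}\bigr)$, the endomorphism algebra of the \emph{fibre functor} on this subcategory. Nori's key lemma (\cite[Section~7.3]{period-buch}) shows that $R$ is finite-dimensional and $\langle S(\Ch')\rangle\cong R\Mod$ compatibly with the forgetful functors; the proof constructs a projective generator of $\langle S(\Ch')\rangle$ as a carefully chosen subobject of a power of $\bigoplus_M S(M)$. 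There is then a natural ring homomorphism $R\to E$ (restrict a natural endomorphism of $F|_{\langle S(\Ch')\rangle}$ to the diagram $\Ch'$), and the desired functor $E\Mod\to\mathcal{B}$ is restriction of scalars along $R\to E$ followed by $R\Mod\cong\langle S(\Ch')\rangle\hookrightarrow\mathcal{B}$. So the algebra you need is the endomorphism algebra of the fibre functor on the generated subcategory, not of any single object, and identifying it requires Nori's construction of the projective rather than an off-the-shelf Gabriel--Freyd theorem.
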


\begin{rem}\label{rem:univ}In particular, $\Ch$ is equivalent to $\CDTarg{\Ch}{T_\sing}$ if
$\Ch$ is itself abelian and $T_\sing$ faithful and exact.
\end{rem}
\begin{prop}\label{prop:make_ab}We have
\[ \Perf(\Ch)\isom\Perf(\CDTarg{\Ch}{T_\sing}).\]
\end{prop}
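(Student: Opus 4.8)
The plan is to produce the isomorphism from the universal property of the diagram category together with the explicit description of $\Perf(-)$ via fibre functors. First I would invoke the factorisation $T_\sing : \Ch \to \CDTarg{\Ch}{T_\sing} \xrightarrow{\tilde T_\sing} \Q\Vect$ and, more importantly, observe that the de Rham fibre functor $T_\dR : \Ch \to \K\Vect$ also extends: by the universal property (applied after base change, or directly to the pair of fibre functors landing in $\VV$) there is a factorisation $T_\dR = \tilde T_\dR \circ P$ through the projection functor $P : \Ch \to \CDTarg{\Ch}{T_\sing}$, and similarly the comparison isomorphism $\phi_T$ extends to the diagram category, so the whole functor $T : \Ch \to \VV$ factors as $\tilde T \circ P$ with $\tilde T : \CDTarg{\Ch}{T_\sing} \to \VV$.

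Next I would use the alternative description of the formal period space established just above the statement, namely
\[ \Perf(\Ch) = \left(\bigoplus_{M\in\Ch}\Hom_\Qbar(T_\dR(M),T_\sing(M))^\vee\right)\big/\text{functoriality}. \]
Since every object $N$ of $\CDTarg{\Ch}{T_\sing}$ carries the fibre functors $\tilde T_\dR$, $\tilde T_\sing$, the same formula defines $\Perf(\CDTarg{\Ch}{T_\sing})$. The functor $P$ induces a $\K$-linear map $\Perf(\Ch)\to\Perf(\CDTarg{\Ch}{T_\sing})$ by sending the summand indexed by $M$ to the summand indexed by $P(M)$; this is well-defined because $P$ is a functor, so functoriality relations in $\Ch$ map to functoriality relations in the diagram category. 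I would then check this map is an isomorphism in two steps. Surjectivity: every object $N\in\CDTarg{\Ch}{T_\sing}$ is a subquotient of $P(M)$ for some $M\in\Ch$ (the diagram category is generated by the image of $P$ under subquotients), and the functoriality relations coming from the sub- and quotient maps express the summand of $N$ in terms of the summand of $P(M)$, hence in terms of the image of $\Perf(\Ch)$. Injectivity: a functoriality relation in $\CDTarg{\Ch}{T_\sing}$ is witnessed by a morphism $f: N_1\to N_2$; writing $N_1,N_2$ as subquotients of objects in the image of $P$ and using that $\Hom$-sets in the diagram category are computed from $\Ch$ via the universal property, one traces the relation back to a combination of functoriality relations already present in $\Perf(\Ch)$.

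The main obstacle is the bookkeeping around subquotients: unlike a genuine abelian category on the nose, $\Ch$ need only be additive, and the passage to $\CDTarg{\Ch}{T_\sing}$ both abelianises and adds subquotients, so one must be careful that the functoriality relations imposed in the definition of $\Perf(-)$ — which are indexed by \emph{all} morphisms in the target category — are exactly generated, after applying $P$, by the ones coming from $\Ch$ plus those forced by the subquotient presentation. The cleanest way to handle this is to appeal to Remark~\ref{rem:univ}: $\CDTarg{\Ch}{T_\sing}$ is the universal abelian recipient, so any additive functor out of $\Ch$ compatible with $T_\sing$ — in particular the assignment $M\mapsto \Hom_\Qbar(T_\dR(M),T_\sing(M))^\vee$ together with its functoriality data — factors uniquely through $P$, and the universal factorisation is precisely the claimed identification of formal period spaces. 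I expect the remaining verifications (compatibility of the coalgebra $\Ah$, of scalar extension to $\K$, and of the evaluation map to $\C$) to be routine given the parallel construction of $\Ah(\Ch)$ with a single fibre functor.
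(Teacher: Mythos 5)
Your surjectivity step is fine, but the injectivity of the map $\Perf(\Ch)\to\Perf(\CDTarg{\Ch}{T_\sing})$ is the hard part and your argument for it has a genuine gap. You need that the much larger supply of functoriality relations in $\Perf(\CDTarg{\Ch}{T_\sing})$, once restricted to the span of symbols on $P(\Ch)$, identifies nothing beyond what was already identified in $\Perf(\Ch)$; you assert this can be ``traced back'' via subquotient presentations, but that is exactly the nontrivial point. Compare with Proposition~\ref{prop:inj}: the analogous injectivity for an inclusion $\Ch'\subset\Ch$ requires $\Ch'$ to be closed under subquotients --- a hypothesis that $P(\Ch)$ fails essentially by design, since the diagram category is exactly the enlargement by subquotients --- and it also requires both categories to be abelian, whereas your $\Ch$ is merely additive. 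Your fallback on the universal property does not close the gap, because $M\mapsto\Hom_\K(T_\dR(M),T_\sing(M)_\K)^\vee$ is not a functor on $\Ch$ (the two $\Hom$-slots have opposite variance, and the outer dual flips again), so it is not admissible input for Nori's universal property, which factors additive functors into $\Q$-linear abelian categories, not coend-type quotients of this shape.

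The paper avoids both difficulties by passing first to the one-fibre-functor object. Since $\Perf(\Ch)$ is a semi-torsor under $\Ah(\Ch)_\K$, base change along the faithfully flat map $\K\to\C$ identifies $\Perf(\Ch)_\C$ with $\Ah(\Ch)_\C$ compatibly with the map to the diagram category; hence it suffices to prove that $\Ah(\Ch)\to\Ah(\CDTarg{\Ch}{T_\sing})$ is an isomorphism. That, in turn, has a completely explicit module-theoretic proof: writing $\Ah(\Ch)=\lim_i E_i^\vee$ and $\CDTarg{\Ch}{T_\sing}=\bigcup_i E_i\Mod$, the claim reduces to $E_i\isom\Ah(E_i\Mod)^\vee$, which one checks directly using that $E_i$ is a projective generator of $E_i\Mod$ (so that $\Ah(E_i\Mod)^\vee\isom\End(T_\sing|_{E_i})=E_i$). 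Any honest completion of your direct generator-and-relation approach would essentially have to reproduce this computation.
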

\begin{proof}We use the semi-torsor structure to reduce the claim to
the comparison of the diagram algebra. It remains to check that the natural map
\[ \Ah(\Ch)\to\Ah(\CDTarg{\Ch}{T_\sing})\]
is an isomorphism. This is only implicit in \cite[Section~7.3]{period-buch}.
Here is the argument: By construction, $\Ah(\Ch)=\lim_i A_i$ for
coalgebras finite dimensional over $\Q$ of the form $A_i=E_i^*$ for a
$\Q$-algebra $E_i$. We also have $\CDTarg{\Ch}{T_\sing}=\bigcup_i E_i\Mod$, the category of finitely generated $E_i$-modules.
Hence it suffices to show that
\[ E_i\isom \Ah(E_i\Mod)^\vee.\]
The category $E_i\Mod$ has the projective generator $E_i$. By \cite[Lemma~7.3.14]{period-buch}, we have $\Ah(E_i\Mod)^\vee\isom\End(T_\sing|_{E_i})$. The latter means all $\Q$-linear maps $E_i\to E_i$ commuting with all $E_i$-morphisms
$E_i\to E_i$. We have $\End_{E_i\Mod}(E_i)=E_i^\circ$ (the opposite algebra of
$E_i$) and hence $\End(T_\sing|_{E_i})=E_i$ as claimed.
\end{proof}

\subsection{The Tannakian case}
Assume now that, in addition, $\Ch$ is a rigid tensor category and
$T$ a faithfully exact tensor functor. Using $T_\sing$ as fibre functor, this makes $\Ch$ a \emph{Tannakian} category. By Tannaka duality, it is
equivalent to the category of finite dimensional representations of a 
pro-algebraic group $G(\Ch)$. There is a second fibre functor
given by $T_\dR$. Again by Tannaka theory, the comparison of the two
fibre functors defines a pro-algebriac affine scheme $X(\Ch)$ over $\K$ such that
\[ X(S)=\left\{\Phi:T_\sing(\cdot)_S\to T_\dR(\cdot)_S| \text{isom. of tensor functors}\right\}.\]
It is a torsor under $G(\Ch)$
\[ G(\Ch)\times X(\Ch)\to X(\Ch).\]
When passing to the underlying semi-torsor, these objects are identical to the ones considered before.

\begin{prop}[{\cite[Section~7.1.4]{period-buch}}]
There are natural isomorphisms
$\Oh(X)\isom \Perf(\Ch)$ and $\Oh(G(\Ch))=\Ah(\Ch)$.
\end{prop}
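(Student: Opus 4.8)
The plan is to identify both sides degree by degree through the universal constructions of Tannaka theory, using the fact that the rigid tensor structure forces all the objects in sight to be finitely generated over the relevant group or Hopf algebra. First I would handle the statement $\Ah(\Ch)=\Oh(G(\Ch))$. By Remark~\ref{rem:univ} together with Tannaka duality we may assume $\Ch$ is the category of finite-dimensional representations of the pro-algebraic group $G=G(\Ch)$, and $T_\sing$ is the forgetful fibre functor. Writing $G=\lim_i G_i$ as a cofiltered limit of affine algebraic groups over $\Q$, one has $\Oh(G)=\mathrm{colim}_i\Oh(G_i)$. On the diagram side, $\Ah(\Ch)=\lim_i\Ah(\Ch_i)^\vee{}^\vee$ — more precisely $\Ah(\Ch)=\mathrm{colim}_i\Ah(\langle V_i\rangle)$ for a suitable exhausting family of objects — and by Proposition~\ref{prop:make_ab} it is enough to check the identification on the finite-dimensional pieces. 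For a single faithful representation $V$ of an algebraic group $G_0$ with image $G'\subset GL(V)$, the subalgebra $\Ah(\langle V\rangle)^\vee=\End(T_\sing|_{\langle V\rangle})$ is, by the standard Tannakian computation (the same one used in the proof of Proposition~\ref{prop:make_ab}, via \cite[Lemma~7.3.14]{period-buch}), exactly the algebra of $G'$-equivariant endomorphisms of the tensor generators, and dualizing recovers $\Oh(G')$. Passing to the limit gives $\Ah(\Ch)\isom\Oh(G(\Ch))$ as coalgebras; one then checks that the comultiplication on $\Ah(\Ch)$ coming from composition of endomorphisms matches the coproduct on $\Oh(G)$ coming from multiplication in $G$, and that the algebra structure on $\Ah(\Ch)$ induced by the tensor structure on $\Ch$ (as in Definition~\ref{defn:perf}, the Künneth product) is the one dual to comultiplication on $G$.

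For the first isomorphism $\Oh(X)\isom\Perf(\Ch)$ I would run the same argument with the second fibre functor $T_\dR$ inserted in one slot. Unwinding the definition in Section~\ref{ssec:form_cat}, for fixed $M$ the bilinear part of $\Perf$ is $\Hom_\K(T_\dR(M),T_\sing(M)_\K)^\vee$, so $\Perf(\Ch)=\mathrm{colim}_i\bigl(\Hom_\K(T_\dR,T_\sing)|_{\langle V_i\rangle}\bigr)^\vee$ modulo functoriality. On the other hand the functor-of-points description $X(S)=\{\Phi:T_{\sing,S}\to T_{\dR,S}\text{ tensor isoms}\}$ exhibits $X$ as a closed subscheme of the affine scheme $\underline{\Hom}^\otimes(T_\sing,T_\dR)$, whose coordinate ring is the analogous colimit of $\Hom_\K(T_\dR(M),T_\sing(M)_\K)^\vee$'s modulo exactly the tensor-compatibility and functoriality relations. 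The key point is that the ``tensor isomorphism'' cut-out relations on the $X$ side correspond precisely to the functoriality-plus-Künneth relations defining $\Perf(\Ch)$; this is checked on each finite-dimensional piece $\langle V_i\rangle$, where $X$ restricted there is the variety of equivariant isomorphisms between the two realizations of the tensor generators, a GL-torsor-type computation. Invoking the already-established semi-torsor identification (the Proposition just before this one, together with Remark~\ref{rem:stor}\eqref{it:stor_tor}) shows the two comodule structures agree, so the isomorphism $\Oh(X)\isom\Perf(\Ch)$ of $\Ah(\Ch)$-comodules follows.

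I expect the main obstacle to be bookkeeping rather than conceptual: one must be careful that the ``modulo functoriality'' quotient in $\Perf(\Ch)$ and $\Ah(\Ch)$ is compatible with passing to the cofiltered limit of the $E_i$ (equivalently the $G_i$), i.e.\ that $\Perf(\Ch)$ genuinely computes the colimit of the coordinate rings of the finite-level torsors $X_i$ and that no relations are lost or gained in the limit. This is the same subtlety that made Proposition~\ref{prop:make_ab} only ``implicit in \cite{period-buch}'', so I would import that argument essentially verbatim: reduce to the finite level via the projective generator $E_i$, apply \cite[Lemma~7.3.14]{period-buch} to identify $\Ah(E_i\Mod)^\vee$ with an endomorphism algebra, and then recognize the $T_\dR$-decorated version of that endomorphism algebra as $\Oh(X_i)$. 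Once the finite level is pinned down, taking the colimit is formal because all transition maps are the natural restriction maps along $\langle V_i\rangle\hookrightarrow\langle V_j\rangle$.
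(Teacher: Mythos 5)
The paper's "proof" of this proposition is a two-line pointer to \cite[Theorem~7.1.21]{period-buch} for the group and \cite[Remark~8.2.11]{period-buch} for $X$, so there is no detailed argument to compare against line by line. Your reconstruction has the right skeleton — identify Nori's coalgebra with the Hopf algebra of $G(\Ch)$ by reducing to finite level via Lemma~7.3.14, then pass to the colimit, then repeat with one fibre functor replaced by $T_\dR$ — and that is indeed the structure of the cited material. However, two points in your write-up are genuine gaps rather than bookkeeping.

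First, your finite-level claim is false as stated. You write that for a faithful representation $V$ of $G_0$ with image $G'$, "the subalgebra $\Ah(\langle V\rangle)^\vee=\End(T_\sing|_{\langle V\rangle})$ is \dots exactly the algebra of $G'$-equivariant endomorphisms of the tensor generators, and dualizing recovers $\Oh(G')$." If $\langle V\rangle$ is the subquotient-closed abelian subcategory (which is what that notation means in this paper and what Lemma~7.3.14 applies to), then $\Ah(\langle V\rangle)$ is a \emph{finite-dimensional} coalgebra, while $\Oh(G')$ is not — dualizing most certainly does not recover $\Oh(G')$ at that level. And "$G'$-equivariant endomorphisms" is the wrong object: $\End(T_\sing|_{\langle V\rangle})$ consists of endomorphisms commuting with all morphisms of the category, which is the \emph{double} commutant (Jacobson density), not the centralizer of $G'$. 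The correct statement is only that the colimit over all $V$ of these finite-dimensional pieces yields $\Oh(G)$, and establishing that is precisely the content of Theorem~7.1.21; you cannot assume it one generator at a time.

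Second, and more seriously for the $\Oh(X)\isom\Perf(\Ch)$ half: $\Perf(\Ch)$ is defined (Section~\ref{ssec:form_cat}) using \emph{only} bilinearity and functoriality — there is no Künneth relation in the definition — whereas $X$ is the scheme of \emph{tensor}-isomorphisms of fibre functors. You assert that the "tensor isomorphism cut-out relations on the $X$ side correspond precisely to the functoriality-plus-Künneth relations defining $\Perf(\Ch)$," which mis-states the definition and, more to the point, silently assumes exactly what needs to be proved. The reason the two agree is rigidity: in a rigid tensor category, any natural transformation of fibre functors (a priori not assumed tensor-compatible or invertible) is automatically a tensor isomorphism, because naturality with respect to the evaluation and coevaluation morphisms of duals forces both properties. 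This is the one conceptual input that makes the proposition true and it is absent from your argument. Once it is supplied, the rest of your colimit bookkeeping goes through, and your observation that the semi-torsor structure from the previous proposition identifies the comodule structures is a clean way to package the conclusion.

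Minor: "the algebra structure on $\Ah(\Ch)$ \dots is the one dual to comultiplication on $G$" is garbled — you mean it is pullback along the diagonal of $G$, i.e.\ the ordinary Hopf-algebra multiplication on $\Oh(G)$.
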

\begin{proof}The case of the group is \cite[Theorem~7.1.21]{period-buch}. 
The same arguments also applies  to $X(\Ch)$, see also \cite[Remark~8.2.11]{period-buch}.
\end{proof} 

\subsection{The case of motives}\label{ssec:motive_torsor}
A particularly interesting case ist $\Ch=\MMN(\K)$. Put
\begin{align*}
 G_\mot(\K)&:=\Spec\left(\Ah(\MMN(\K))\right),\\
 X(\K)&:=\Spec\left(\Perf(\MMN(\K))\right).
\end{align*}
Then
\[ G_\mot(\K)_\K\times_\K X(\K)\to X(\K)\]
is a torsor. We use this to give the proof of Theorem~\ref{thm:indep} that was left open.

\begin{proof}[Proof of Theorem~\ref{thm:indep}]
Fix $\K\subset\Qbar$. Let $\Ah_\Nori(\K)$ be the coalgebra defined
using the same data as in Definition~\ref{defn:perf}, but with $H^i_\dR$ replaced by $H^i_\sing$ and with base field $\K$ instead of $\Qbar$.

The considerations of the present section also apply to this case and show
that $\Ah_\Nori(\K)$ is nothing but Nori's diagram coalgebra for the diagram
of pairs, see \cite[Definition~9.1.1]{period-buch}. By definition, see
\cite[Definition~9.1.3]{period-buch}, the category $\MMN(\K)$ is the category
of representations of $G_\mot(\K)=\Spec\left(\Ah_\Nori(\K)\right)$. 
Hence $\Ah_\Nori(\K)\isom\Ah(\MMN(\K))$. 

By \cite[Corollary~10.1.7]{period-buch}
we also have $\Ah_\Nori(\K)\isom\Ah(\DMgm(\K))$. By Galois theory of periods, i.e., the torsor property, this also implies
\[ \Perf(\K)\isom\Perf(\MMN(\K))\isom\Perf(\DMgm(\K)).\]

We now have to compare different $\K$'s. Let $K/k$ be an algebraic extension of
subfields of $\Qbar$. The comparison
\[ \Perf(K)\isom\Perf(k)\]
is claimed in \cite[Proposition~3.1.11]{period-buch}. Unfortunately, the argument is not complete. It only shows that the map induced by base change
\[ \Per(k)\to\Perf(K)\]
is surjective. We are going to complete the argument now. It suffices to consider the case $K/k$ finite and Galois. In this case, there is a natural short exact sequence
\[ 0\to G_\mot(K)\to G_\mot(k)\to\Gal(K/k)\to 0\]
by \cite[Theorem 9.1.6]{period-buch}. Moreover, we have torsor structures
\begin{gather*}
G_\mot(k)_K\times_K X(k)_K\to X(k)_K,\\
G_\mot(K)_K\times_K X(K)\to X(K).
\end{gather*}
We have $\Qbar\subset\Perf(K)$ as formal periods of zero-dimensional varieties.
Hence the structure morphism $X(k)\to \Spec(k)$ factors via
$\Spec(K)$. We write $X'(k)$ for $X(k)$ viewed as $K$-scheme. Then
\[ X(k)_K=X(k)\times_kK\isom X'(k)\times_K(\Spec(K)\times_k\Spec(K))\isom\coprod_{\sigma\in\Gal(K/k)}X'(k).\]
Comparing with the structure of $G_\mot(k)$, we deduce the torsor
\[ G_\mot(K)_K\times_KX'(k)\to X'(k).\]
Hence the natural map $X'(k)\to X(K)$ is an isomorphism.
\end{proof}

\section{Consequences for the period conjecture}
\subsection{Abstract considerations}
As before,
let $\Ch$ be an additive category and $T:\Ch\to \VV$ an additive functor
and $\CDTarg{\Ch}{T_\sing}$ the diagram category.
\begin{prop}
The period conjecture for $\Ch$ is equivalent to the period conjecture for
$\CDTarg{\Ch}{T_\sing}$.
\end{prop}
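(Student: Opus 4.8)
The plan is to reduce both period conjectures to statements about the associated semi-torsors and to invoke the two comparisons already established, namely Proposition~\ref{prop:make_ab} for the formal period spaces and the (implicit) isomorphism $\Ah(\Ch)\isom\Ah(\CDTarg{\Ch}{T_\sing})$ of diagram coalgebras established in its proof. Concretely, I would first observe that the evaluation map $\Perf(\Ch)\to\C$ is defined purely through the fibre functors $T_\dR$, $T_\sing$ and the comparison isomorphism $\phi_T$, and that all of this data is preserved under the universal factorisation $T_\sing:\Ch\to\CDTarg{\Ch}{T_\sing}\to\Q\Vect$ (together with the analogous de Rham fibre functor on the diagram category). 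Hence the square relating $\Perf(\Ch)$, $\Perf(\CDTarg{\Ch}{T_\sing})$ and their common target $\C$ commutes, so that $\Per(\Ch)=\Per(\CDTarg{\Ch}{T_\sing})$ as subspaces of $\C$.

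Next I would assemble the commutative diagram
\[
\begin{CD}
\Perf(\Ch) @>{\sim}>> \Perf(\CDTarg{\Ch}{T_\sing})\\
@VVV @VVV\\
\Per(\Ch) @= \Per(\CDTarg{\Ch}{T_\sing})
\end{CD}
\]
where the top horizontal map is the isomorphism of Proposition~\ref{prop:make_ab} and the vertical maps are the respective evaluation maps. Once this diagram is in place, the equivalence is immediate: injectivity of the left vertical arrow holds if and only if injectivity of the right vertical arrow holds, since the top arrow is an isomorphism and the bottom identification is literally the identity on a subset of $\C$. This is precisely the assertion that the period conjecture for $\Ch$ is equivalent to the period conjecture for $\CDTarg{\Ch}{T_\sing}$.

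The one point requiring a little care—and the main obstacle—is the commutativity of the square, i.e.\ the compatibility of the isomorphism $\Perf(\Ch)\isom\Perf(\CDTarg{\Ch}{T_\sing})$ with the evaluation maps into $\C$. The isomorphism of Proposition~\ref{prop:make_ab} was deduced from the semi-torsor structure together with the isomorphism of diagram coalgebras; one must check that the distinguished $\C$-point $\phi_T$ used to define the semi-torsor structure on $\Perf(\Ch)$ corresponds, under the comparison $\Ah(\Ch)_\C\isom\Ah(\CDTarg{\Ch}{T_\sing})_\C$, to the corresponding $\C$-point on the diagram-category side. This is true because the diagram category is constructed so that the fibre functor $\tilde{T}_\sing$ recovers $T_\sing$ (and likewise for de Rham), hence the comparison isomorphism $\phi_{\tilde T}$ is induced from $\phi_T$; the identifications in the proof of Proposition~\ref{prop:make_ab} are all built from these fibre functors, so they intertwine the two $\C$-points by construction. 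Spelling this out is routine once one tracks the functors involved, but it is the step where the argument could go wrong if the identifications were chosen incompatibly.
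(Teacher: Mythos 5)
Your proposal is correct and follows the same approach as the paper, which simply cites Proposition~\ref{prop:make_ab} for the identification of the formal period algebras. Your additional care about the compatibility of the evaluation maps with that identification spells out a point the paper leaves implicit, but it is not a different route — it is the same argument made explicit.
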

\begin{proof}The formal period algebras agree by
Proposition~\ref{prop:make_ab}.
\end{proof}
Hence we only need to consider abelian categories and faithful exact functors
$T$ when analysing the period conjecture.

\begin{prop}\label{prop:inj}Let $\Ch$ be an abelian, $\Q$-linear category. Let $\Ch'\subset \Ch$ be an abelian subcategory such that the inclusion is exact. The following are equivalent:
\begin{enumerate}
\item \label{it:1}the category $\Ch'$ is closed under subquotients in $\Ch$;
\item \label{it:2}the map $\Perf(\Ch')\to\Perf(\Ch)$ is injective.
\end{enumerate}
\end{prop}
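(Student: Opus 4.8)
The plan is to translate the statement into Tannakian/coalgebra language via the machinery already set up, and then identify condition \eqref{it:1} with the surjectivity of the associated comodule restriction. Recall that $\Perf(\Ch)$ is the $\K$-semi-torsor over the coalgebra $\Ah(\Ch)$, and similarly for $\Ch'$. The inclusion $\Ch'\hookrightarrow\Ch$ being exact induces a morphism of coalgebras $\Ah(\Ch')\to\Ah(\Ch)$, compatible with the semi-torsor structures, so that the square relating $\Perf(\Ch')\to\Perf(\Ch)$ and $\Ah(\Ch')\to\Ah(\Ch)$ commutes and both horizontal maps become isomorphisms over $\C$ after applying the respective period isomorphisms $\phi_T$. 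The key reduction is that, because the semi-torsor structure map becomes an isomorphism over $\C$, the map $\Perf(\Ch')\to\Perf(\Ch)$ is injective if and only if $\Ah(\Ch')\to\Ah(\Ch)$ is injective. Indeed, one has a commutative diagram
\[
\begin{CD}
\Perf(\Ch')_\C @>{\isom}>> \Ah(\Ch')_\C\\
@VVV @VVV\\
\Perf(\Ch)_\C @>{\isom}>> \Ah(\Ch)_\C
\end{CD}
\]
and since extension of scalars $\K\to\C$ is faithfully flat, injectivity of the left vertical map over $\K$ is equivalent to injectivity of the right vertical map over $\K$.

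So the problem becomes: the inclusion of coalgebras $\Ah(\Ch')\to\Ah(\Ch)$ is injective if and only if $\Ch'$ is closed under subquotients in $\Ch$. By Remark~\ref{rem:univ} and Proposition~\ref{prop:make_ab} we may and do assume $\Ch$ and $\Ch'$ are themselves the categories of finite-dimensional comodules over $\Ah(\Ch)$ and $\Ah(\Ch')$ respectively, with $\Ch'$ the image of a full exact embedding. Writing $A'=\Ah(\Ch')$, $A=\Ah(\Ch)$, the category of $A'$-comodules is a full abelian subcategory of $A$-comodules closed under the forgetful functor to $\Q\Vect$, compatibly. For the implication \eqref{it:2}$\Rightarrow$\eqref{it:1}: if $A'\to A$ is injective, then $A'$ is a subcoalgebra of $A$, and it is a standard fact that the category of $A'$-comodules then consists exactly of those $A$-comodules all of whose subquotients have "coefficients" in $A'$; in particular it is closed under subquotients inside $A$-comodules. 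For the converse \eqref{it:1}$\Rightarrow$\eqref{it:2}: if $\Ch'$ is closed under subquotients, then $\Ah(\Ch')$ is the subcoalgebra of $\Ah(\Ch)$ generated by the coefficient spaces $c(M)\subset\Ah(\Ch)$ of objects $M\in\Ch'$ (the image of $\End_\Q(T_\sing(M))^\vee\to\Ah(\Ch)$), hence the map $\Ah(\Ch')\to\Ah(\Ch)$ is injective — here one uses that the coefficient coalgebra of $M$ computed inside $\Ch'$ agrees with the one computed inside $\Ch$ precisely because no new subquotients (hence no new endomorphisms of the fibre functor restricted to $\langle M\rangle$) appear.

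The main obstacle, and the point requiring care, is exactly the last sentence: one needs that $\Ah(M)=\End_\Q(T_\sing(M))^\vee$ as a quotient-then-glued piece of $\Ah(\Ch)$ does not depend on whether the ambient category is $\Ch'$ or $\Ch$, and this is where closure under subquotients is essential rather than cosmetic. Concretely, $\Ah(\langle M\rangle)$ is the dual of the image of $\Q[\text{arrows}]$ in $\End_\Q(T_\sing(M))$, i.e. of the "algebra of the diagram of $M$ and its subquotients and morphisms between them"; if $\Ch'$ is closed under subquotients in $\Ch$ this image is the same whether formed in $\Ch'$ or $\Ch$, so the colimit $\Ah(\Ch')=\varinjlim_{M\in\Ch'}\Ah(\langle M\rangle)$ injects into $\Ah(\Ch)=\varinjlim_{M\in\Ch}\Ah(\langle M\rangle)$. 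If $\Ch'$ is \emph{not} closed under subquotients, a subquotient $N$ of some $M\in\Ch'$ lying outside $\Ch'$ can force identifications in $\Ah(\Ch)$ (relations coming from morphisms into or out of $N$) that are absent in $\Ah(\Ch')$, so the map fails to be injective; one should exhibit this as the precise failure, e.g. via the standard example of a nonsplit extension whose sub- or quotient object is omitted, so that a formal period symbol becomes zero in $\Perf(\Ch)$ but not in $\Perf(\Ch')$. I would organise the write-up so that the coalgebra reformulation is stated first as a lemma, the faithfully-flat descent of injectivity is dispatched in one line, and then the two implications are handled by the diagram/coefficient-space bookkeeping just sketched, citing \cite[Section~7.3]{period-buch} for the comodule-theoretic facts about subcoalgebras and coefficient spaces.
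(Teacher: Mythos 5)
Your opening reduction is exactly the paper's: extend scalars to $\C$, use the period isomorphism to identify $\Perf(-)_\C\isom\Ah(-)_\C$, and conclude by faithful flatness of $\K\to\C$ that injectivity of $\Perf(\Ch')\to\Perf(\Ch)$ is equivalent to injectivity of $\Ah(\Ch')\to\Ah(\Ch)$. For \eqref{it:1}$\Rightarrow$\eqref{it:2} you spell out the coefficient-coalgebra argument (closure under subquotients means the diagram algebra of $\langle M\rangle$ is the same computed in $\Ch'$ or in $\Ch$), which is in substance what the paper's citation to \cite[Proposition~7.5.9]{period-buch} encapsulates. For \eqref{it:2}$\Rightarrow$\eqref{it:1} your route genuinely differs: you invoke the standard comodule-theoretic fact that if $A'\subset A$ is a subcoalgebra then the category of $A'$-comodules sits inside $A$-comodules as a full subcategory closed under subquotients (coefficient spaces only shrink under passing to subquotients). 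The paper instead argues by contrapositive: it takes $K=\ker(\Ah(\Ch')\to\Ah(\Ch))$, views it as an $\Ah(\Ch)$-comodule that is a filtered union of finite-dimensional pieces, and extracts from those pieces an object of $\Ch$ -- a subquotient of something already in the image of $\Ch'$ -- that is not in $\Ch'$. Your abstract argument is cleaner and sidesteps the slightly delicate bookkeeping in the kernel construction, at the cost of appealing to a general coalgebra fact rather than producing an explicit witness. One remark on organisation: your final paragraph, which gestures at exhibiting a concrete failure of injectivity via a nonsplit extension whose sub- or quotient is omitted, duplicates the contrapositive of \eqref{it:2}$\Rightarrow$\eqref{it:1} that you already dispatched via the subcoalgebra fact, and it is left unfinished ("one should exhibit\ldots"). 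It should simply be cut; the two implications you gave cleanly above already close the proof.
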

\begin{proof}Injectivity can be tested after base change to $\C$, hence the second assertion of equivalent to the injectivity of $\Ah(\Ch')\to\Ah(\Ch)$.
The implication from (\ref{it:1}) to (\ref{it:2}) is \cite[Proposition~7.5.9]{period-buch}. 

For the converse, consider $K=\ker(\Ah(\Ch')\to\Ah(\Ch)$. It is an $\Ah(\Ch)$-comodule, but not a $\Ah(\Ch')$-comodule. 
By construction
$\Ah(\Ch')$ is a direct limit of coalgebras finite over $\Q$. Hence
$K$ is a direct limit of $\Ah(\Ch)$-comodules of finite dimension over $\Q$.
The cannot all be $\Ah(\Ch')$-comodules, hence we have found objects of
$\Ch$ that are not in $\Ch'$.
\end{proof}
\begin{cor}\label{cor:faithful}Suppose $\Ch$ is abelian and $T$ faithful and exact. If the period conjecture holds
for $\Ch$, then $T:\Ch\to \VV$ is fully faithful with image closed
under subquotients.
\end{cor}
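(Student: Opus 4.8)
The plan is to deduce everything from the characterization of period-conjecture-injectivity in terms of the diagram coalgebras, together with the semi-torsor structure. First I would unwind what the period conjecture for $\Ch$ actually gives us: the evaluation map $\Perf(\Ch)\to\Per(\Ch)\subset\C$ is injective. Base-changing the semi-torsor isomorphism $\Perf(\Ch)_\C\isom\Ah(\Ch)_\C$ from the proposition preceding Definition~\ref{defn:coh_period}, injectivity of $\Perf(\Ch)\to\C$ forces the evaluation to already detect everything the comodule structure sees; concretely, since $\Per(\Ch)\subset\C$ is (at most) countable-dimensional over $\Q$ while $\Ah(\Ch)$ can be large, the only way $\Perf(\Ch)\hookrightarrow\C$ can hold is a statement we now make precise via Proposition~\ref{prop:inj}.

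The key step is to apply Proposition~\ref{prop:inj} with $\Ch'$ the full subcategory of $\VV$ generated under subquotients by the image $T(\Ch)$, and with the functor $\VV\to\VV$ the identity; equivalently, work inside the abelian category $\CDTarg{\Ch}{T_\sing}$, which by Remark~\ref{rem:univ} equals $\Ch$ when $\Ch$ is abelian and $T_\sing$ faithful and exact. The functor $T:\Ch\to\VV$ factors through $\langle T(\Ch)\rangle\subset\VV$, and on formal period spaces this gives
\[ \Perf(\Ch)\isom\Perf(\langle T(\Ch)\rangle)\to\Perf(\VV).\]
Now if the period conjecture holds for $\Ch$, then $\Perf(\Ch)\to\C$ is injective; since the evaluation $\Perf(\VV)\to\C$ factors this map and evaluation on $\Perf(\langle T(\Ch)\rangle)$ is visibly the restriction of evaluation on $\Perf(\VV)$, we conclude $\Perf(\langle T(\Ch)\rangle)\to\Perf(\VV)$ is injective. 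By the equivalence (\ref{it:2})$\Rightarrow$(\ref{it:1}) in Proposition~\ref{prop:inj}, $\langle T(\Ch)\rangle$ is closed under subquotients in $\VV$. Unwinding the generation, this says $T(\Ch)$ is already closed under subquotients, which is exactly the statement that the image of $T$ is closed under subquotients.

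For full faithfulness, I would argue that fullness of $T$ is equivalent to surjectivity of $\Ah(\Ch)\to\Ah(\langle T(\Ch)\rangle_{\VV})$ being an isomorphism onto the subcoalgebra cut out by $\langle T(\Ch)\rangle$, which in turn, via the semi-torsor dictionary, corresponds to injectivity of the formal period map; faithfulness is part of the hypothesis. More directly: once the image is closed under subquotients, $T$ exhibits $\Ch$ as a full abelian subcategory of $\VV$ closed under subquotients precisely when no two non-isomorphic objects of $\Ch$ have the same image and no $\Hom$ is lost, and the loss of a $\Hom$ or the identification of two objects would produce a relation among formal periods not induced by functoriality in $\VV$, contradicting injectivity of $\Perf(\Ch)\to\Perf(\VV)$. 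The main obstacle I anticipate is bookkeeping: carefully matching "the period conjecture for $\Ch$" (injectivity into $\C$) with "injectivity of $\Perf(\Ch)\to\Perf(\VV)$", since a priori $\Per(\VV)=\Per$ could still collapse relations — but this is handled because $\Per(\VV)=\Per\subset\C$ and the evaluation on $\Perf(\VV)$ is injective if and only if Kontsevich's conjecture holds, which we do \emph{not} assume; instead we only need the one-sided implication that injectivity of the composite $\Perf(\Ch)\to\Perf(\VV)\to\C$ forces injectivity of the first arrow, which is immediate. So the real content is purely the application of Proposition~\ref{prop:inj}, and the delicate point is simply verifying that the inclusion-exact, subquotient-closed hypotheses of that proposition are met by $\langle T(\Ch)\rangle\subset\VV$.
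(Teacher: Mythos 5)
Your overall plan — pass to the subquotient-closure of the image and invoke Proposition~\ref{prop:inj} — is the right one and matches the paper's strategy, but the way you apply the proposition breaks down in two places.

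First, you apply Proposition~\ref{prop:inj} to the pair $\langle T(\Ch)\rangle\subset\VV$ and deduce that $\langle T(\Ch)\rangle$ is closed under subquotients in $\VV$. That conclusion is vacuously true, since $\langle T(\Ch)\rangle$ is \emph{defined} as a closure under subquotients, and it does not imply what you then assert, namely that $T(\Ch)$ itself is closed under subquotients. The inference ``the closure is closed, hence the generating subcategory was already closed'' is a non sequitur. The proposition must instead be applied to the inclusion $T(\Ch)\subset\langle T(\Ch)\rangle$: the period conjecture for $\Ch$ says $\Perf(\Ch)\to\C$ is injective, this factors as $\Perf(\Ch)\to\Perf(\langle T(\Ch)\rangle)\to\C$, hence the first arrow is injective, and Proposition~\ref{prop:inj} then yields that $T(\Ch)$ is closed under subquotients inside $\langle T(\Ch)\rangle$, forcing $T(\Ch)=\langle T(\Ch)\rangle$. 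Relatedly, your claim $\Perf(\Ch)\isom\Perf(\langle T(\Ch)\rangle)$ is not available in advance; it is essentially the content of the step you are trying to carry out, and writing it as a known isomorphism hides the actual work.

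Second, the fullness argument is not a proof. The appeal to ``loss of a $\Hom$ would produce a relation among formal periods not induced by functoriality'' runs in the wrong direction: missing morphisms give \emph{fewer} functoriality relations, not more, so no contradiction with injectivity of $\Perf(\Ch)\to\C$ arises this way. The paper instead deduces fullness directly from the subquotient-closure just established, via the graph trick: for $X,Y\in\Ch$ and $f\colon T(X)\to T(Y)$ in $\VV$, the graph $\Gamma_f\subset T(X)\times T(Y)$ is a subobject of $T(X\times Y)$, hence lies in the image of $T$; projecting to $T(X)$ is an isomorphism, and composing its inverse with the projection to $T(Y)$ realises $f$ as $T$ of a morphism in $\Ch$. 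You need this (or an equivalent) concrete step; the semi-torsor bookkeeping alone does not deliver fullness.

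Minor remarks: the opening appeal to countability of $\Per(\Ch)$ plays no role and can be dropped, and faithfulness is simply a hypothesis, not something to re-derive.
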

\begin{proof}We view $\Ch$ as a subcategory of $\VV$ via $T$. 
Let $\tilde{\Ch}$ be its closure under
subquotients. Note that the periods of $\Ch$ agree with the periods
of $\tilde{\Ch}$. By assumption, the composition
\[ \Perf(\Ch)\to \Perf(\tilde{\Ch})\to\Per(\Ch)\]
is injective. Hence the first map injective. By Proposition~\ref{prop:inj} this implies
that $\Ch$ is closed under subquotients in $\VValg$. Let $X,Y\in\Ch$ and
$f:X\to Y$ a morphism in $\VValg$. Then its graph $\Gamma\subset X\times Y$
is a subobject in $\VValg$, hence in $\Ch$. This implies that $f$ is in
$\Ch$. Hence $\Ch$ is a full subcategory.
\end{proof}

\begin{cor}\label{cor:full}Let $\Ch'\subset\Ch$ be a full abelian subcategory closed under subquotients. If the period conjecture holds for $\Ch$, then it holds for $\Ch'$.
\end{cor}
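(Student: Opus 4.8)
\section*{Proof proposal for Corollary~\ref{cor:full}}

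The plan is to deduce this from Proposition~\ref{prop:inj} by a short diagram chase, exactly parallel to the argument already used in Corollary~\ref{cor:faithful}. First I would record the commutative square of evaluation maps
\[
\begin{CD}
\Perf(\Ch') @>>> \Perf(\Ch)\\
@VVV @VVV\\
\Per(\Ch') @>>> \Per(\Ch)
\end{CD}
\]
where the horizontal arrows are induced by the inclusion $\Ch'\subset\Ch$ and the vertical arrows are the evaluation maps $\Perf(\cdot)\to\Per(\cdot)\subset\C$. The bottom arrow is simply the inclusion of the $\K$-subspace $\Per(\Ch')$ of $\C$ into the $\K$-subspace $\Per(\Ch)$, since every object of $\Ch'$ is an object of $\Ch$; in particular it is injective.

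Next I would check that Proposition~\ref{prop:inj} applies. A full abelian subcategory $\Ch'$ of $\Ch$ that is closed under subquotients is closed under the kernels and cokernels formed in $\Ch$, so the inclusion $\Ch'\subset\Ch$ is exact; combined with the hypothesis that $\Ch'$ is closed under subquotients, condition~(\ref{it:1}) of Proposition~\ref{prop:inj} holds, hence so does~(\ref{it:2}), i.e. the top arrow $\Perf(\Ch')\to\Perf(\Ch)$ is injective.

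Finally, the hypothesis that the period conjecture holds for $\Ch$ means that the right-hand vertical arrow $\Perf(\Ch)\to\Per(\Ch)$ is injective. Composing the top arrow with the right vertical arrow gives an injective map $\Perf(\Ch')\to\Per(\Ch)$; by commutativity of the square this composite equals the composite of the left vertical arrow $\Perf(\Ch')\to\Per(\Ch')$ with the (injective) inclusion $\Per(\Ch')\hookrightarrow\Per(\Ch)$. Since a composite is injective only if its first factor is, the map $\Perf(\Ch')\to\Per(\Ch')$ is injective, which is precisely the period conjecture for $\Ch'$. There is no substantial obstacle here: the only point requiring a word of justification is the exactness of the inclusion $\Ch'\subset\Ch$, so that Proposition~\ref{prop:inj} is genuinely available.
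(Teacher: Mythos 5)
Your proof is correct and follows essentially the same route as the paper's: the paper's one-line argument is precisely the observation that $\Perf(\Ch')\hookrightarrow\Perf(\Ch)\to\C$ is injective, with the injectivity of the first arrow coming (implicitly) from Proposition~\ref{prop:inj}. You have simply spelled out the commutative square, the exactness of the inclusion, and the factorisation through $\Per(\Ch')$, which the paper leaves tacit.
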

\begin{proof}We have $\Perf(\Ch')\subset\Perf(\Ch)\to\C$. If the second
map is injective, then so is the composition.
\end{proof}

\subsection{The case of motives}
We now specialise to $\Ch=\MMN(\K,\Q)$.

\begin{defn}Let $M\in \MMN(\K,\Q)$. We denote by $\langle M\rangle$ the full
abelian category of $\MMN(\K,\Q)$ closed under subquotients generated by $M$.
\end{defn}

\begin{prop}\label{prop:equiv1}The following are equivalent:
\begin{enumerate}
\item 
The period conjecture holds for $\MMN(\K,\Q)$.
\item  The period conjecture holds for $\langle M\rangle $ for all $M\in\MMN(\K,\Q)$.
\end{enumerate}
\end{prop}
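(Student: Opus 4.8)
The plan is to derive this equivalence from Proposition~\ref{prop:inj} together with the fact that $\MMN(\K,\Q)$ is the filtered union of its subcategories $\langle M\rangle$. First I would observe that the implication (1) $\Rightarrow$ (2) is immediate from Corollary~\ref{cor:full}: each $\langle M\rangle$ is by definition a full abelian subcategory of $\MMN(\K,\Q)$ closed under subquotients, so if the period conjecture holds for the ambient category it holds for $\langle M\rangle$.

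For the converse (2) $\Rightarrow$ (1), I would argue that injectivity of $\Perf(\MMN(\K,\Q))\to\Per(\MMN(\K,\Q))$ can be checked on elements, and any element of $\Perf(\MMN(\K,\Q))$ is represented by a finite $\Q$-linear combination of symbols $(M_1,\sigma_1,\omega_1),\dots,(M_n,\sigma_n,\omega_n)$. Setting $M=M_1\oplus\dots\oplus M_n$, all these symbols lie in $\Perf(\langle M\rangle)$, so the element comes from $\Perf(\langle M\rangle)$. Now by Proposition~\ref{prop:inj}, applied with $\Ch'=\langle M\rangle\subset\Ch=\MMN(\K,\Q)$ (the inclusion is exact and the subcategory is closed under subquotients by construction), the map $\Perf(\langle M\rangle)\to\Perf(\MMN(\K,\Q))$ is injective. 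Combining this with the assumed injectivity of $\Perf(\langle M\rangle)\to\Per(\langle M\rangle)\subset\Per(\MMN(\K,\Q))$, any element of $\Perf(\MMN(\K,\Q))$ mapping to $0$ in $\Per$ already maps to $0$ in $\Per(\langle M\rangle)$, hence is $0$ in $\Perf(\langle M\rangle)$, hence is $0$ in $\Perf(\MMN(\K,\Q))$.

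The one point requiring care — and what I expect to be the main obstacle, though a mild one — is the bookkeeping with the functoriality relations: I must make sure that if a finite combination of symbols built from summands of $M$ represents the zero class in $\Perf(\MMN(\K,\Q))$, then the witnessing relations can themselves be taken to involve only objects in $\langle M\rangle$. This is precisely the content of the injectivity statement in Proposition~\ref{prop:inj}, so the argument is really just an application of that proposition plus the observation that $\Perf$ commutes with filtered unions of subcategories. I would phrase the proof to make the reduction to a single generator $M$ explicit and then invoke Proposition~\ref{prop:inj} and Corollary~\ref{cor:full} directly.
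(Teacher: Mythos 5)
Your proposal is correct and follows essentially the same route as the paper: the paper also observes that $\Perf(\langle M\rangle)\hookrightarrow\Perf(\MMN)$ (via Proposition~\ref{prop:inj}, since each $\langle M\rangle$ is full and closed under subquotients), notes $\Perf(\MMN)=\bigcup_M\Perf(\langle M\rangle)$, and concludes that injectivity of the evaluation map on $\Perf(\MMN)$ is equivalent to injectivity on each $\Perf(\langle M\rangle)$. You merely spell out the direct-sum trick underlying the union statement and invoke Corollary~\ref{cor:full} explicitly for the easy direction, both of which are implicit in the paper's terser argument.
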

Note that $\Per(\langle M\rangle)=\Per\langle M\rangle$ in the notation of
 Definition~\ref{defn:VV}.
\begin{proof}We have $\Perf(\langle M\rangle)\subset\Perf(\MMN)$ because the subcategory is full and closed under subquotients. Hence
\[ \Perf=\Perf(\MMN)=\bigcup_M\Perf(\langle M\rangle).\]
The map $\Perf\to\C$ is injective if and only if this is true for all 
$\Perf(\langle M\rangle)$.
\end{proof}

\subsection{Grothendieck's period conjecture}
In the previous section, we did not make use of the tensor product on motives.
Indeed, by Section~\ref{ssec:motive_torsor} we have the torsor
\[ G_\mot(\Qbar)_\Qbar\times_\Qbar X(\Qbar)\to X(\Qbar).\]

\begin{cor}[Nori]\label{galois}If the period conjecture holds, then $\Per$ is a $G_\mot(\K)_\K$-torsor.
\end{cor}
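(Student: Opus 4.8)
The plan is to deduce this directly from the torsor structure established in Section~\ref{ssec:motive_torsor} together with the definition of the period conjecture. Recall that we have the affine $\K$-scheme $X(\K)=\Spec(\Perf(\MMN(\K)))$, the pro-algebraic group $G_\mot(\K)=\Spec(\Ah(\MMN(\K)))$, and the torsor structure
\[ G_\mot(\K)_\K\times_\K X(\K)\to X(\K).\]
The period conjecture for $\MMN(\K)$ (equivalently Kontsevich's Conjecture~\ref{conj:kont}, using Theorem~\ref{thm:indep} and Proposition~\ref{prop:equiv1} to pass between fields) asserts that the evaluation map $\Perf(\MMN(\K))\to\Per$ is injective; since it is always surjective onto $\Per$ by construction, it is then an isomorphism of $\K$-algebras.

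First I would observe that $\Per=\Per(\MMN(\K))$ by Proposition~\ref{prop:all_mot}, so under the period conjecture the evaluation map identifies $\Per$ with $\Perf(\MMN(\K))=\Oh(X(\K))$ as $\K$-algebras. Dually this gives an isomorphism of affine $\K$-schemes $\Spec(\Per)\isom X(\K)$. Next I would transport the torsor structure along this isomorphism: the action map $G_\mot(\K)_\K\times_\K X(\K)\to X(\K)$ becomes an action map
\[ G_\mot(\K)_\K\times_\K\Spec(\Per)\to\Spec(\Per),\]
and the torsor condition — the existence of an affine $S/\K$ and a point of $X(\K)(S)$ trivialising the action — is preserved verbatim, since it is a statement purely about the scheme $X(\K)$ with its $G_\mot(\K)_\K$-action. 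By Remark~\ref{rem:stor}\eqref{it:stor_tor} the presence of the full torsor (not merely semi-torsor) structure is automatic here because $G_\mot(\K)$ is a genuine pro-algebraic group (its coordinate ring $\Ah(\MMN(\K))$ is a Hopf algebra, the motives forming a rigid tensor category), $\Per$ is a $\K$-algebra, and the evaluation map is a ring homomorphism. This yields precisely the assertion that $\Spec(\Per)$ is a $G_\mot(\K)_\K$-torsor.

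The only subtlety — and the place I would be most careful — is bookkeeping about which scheme lives over which base: $X(\K)$ and $G_\mot(\K)$ are naturally defined over $\K$ after the base change performed in the Tannakian construction (the fibre functor $T_\dR$ has values in $\K\Vect$), and one must make sure the isomorphism $\Spec(\Per)\isom X(\K)$ is an isomorphism of $\K$-schemes, using that $\K\subset\Qbar\subset\Per$ makes $\Per$ a $\K$-algebra compatibly with the $\K$-structure on $\Perf(\MMN(\K))$ coming from the scalar multiplication $\alpha(X,Y,i,\omega,\sigma)=(X,Y,i,\alpha\omega,\sigma)$. Once this compatibility is noted, the corollary is immediate; there is no genuine obstacle beyond invoking the period conjecture to upgrade the canonical surjection $\Oh(X(\K))\to\Per$ to an isomorphism.
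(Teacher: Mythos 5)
Your proof is correct and takes the same route as the paper's (implicit) argument: the period conjecture turns the canonical surjection $\Perf(\MMN(\K))\to\Per$ into an isomorphism, and the torsor structure on $X(\K)=\Spec(\Perf(\MMN(\K)))$ from Section~\ref{ssec:motive_torsor} transports along the resulting identification $\Spec(\Per)\isom X(\K)$. The appeal to Remark~\ref{rem:stor}\eqref{it:stor_tor} is harmless but redundant, since Section~\ref{ssec:motive_torsor} already asserts a full torsor (not merely a semi-torsor) structure on $X(\K)$.
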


\begin{defn}Let $M\in\MMN(\Qbar)$. We put $\langle M\rangle^{\tensor, \vee}\subset \MMN(\Qbar)$ the full abelian rigid tensor subcategory closed under subquotients generated by $M$. We denote $G(M)$ its Tannaka dual.
\end{defn}

\begin{prop}\label{prop:equiv2}The following are equivalent:
\begin{enumerate}
\item the period conjecture holds for $\MMN(\Qbar)$;
\item the period conjecture holds for $\langle M\rangle^{\tensor,\vee}$ for
all $M\in\MMN(\Qbar)$.
\end{enumerate}
\end{prop}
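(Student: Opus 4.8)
The plan is to reduce Proposition~\ref{prop:equiv2} to the already-established Proposition~\ref{prop:equiv1} (for $\Ch=\MMN(\Qbar)$) by showing that the tensor-generated category $\langle M\rangle^{\tensor,\vee}$ coincides, as an abstract abelian subcategory closed under subquotients, with $\langle N\rangle$ for a suitable single motive $N$. The obvious candidate is $N=\bigoplus_{a,b\le n} M^{\tensor a}\tensor (M^\vee)^{\tensor b}$ for $n$ large, or — to be safe about what ``large'' means — the colimit statement that $\langle M\rangle^{\tensor,\vee}=\bigcup_n \langle N_n\rangle$ with $N_n=\bigoplus_{a+b\le n}M^{\tensor a}\tensor(M^\vee)^{\tensor b}$. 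Granting this, the equivalence follows formally: if the period conjecture holds for $\MMN(\Qbar)$ then by Corollary~\ref{cor:full} it holds for every full abelian subcategory closed under subquotients, in particular for each $\langle M\rangle^{\tensor,\vee}$; conversely, since every object of $\MMN(\Qbar)$ lies in some $\langle M\rangle\subset\langle M\rangle^{\tensor,\vee}$, if the conjecture holds for all the latter it holds for all $\langle M\rangle$, and Proposition~\ref{prop:equiv1} finishes the job.

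Concretely I would argue as follows. First, recall that $\Perf(\MMN(\Qbar))=\bigcup_{M}\Perf(\langle M\rangle^{\tensor,\vee})$: the right-hand side is contained in the left because each $\langle M\rangle^{\tensor,\vee}$ is a full subcategory closed under subquotients (so $\Perf(\langle M\rangle^{\tensor,\vee})\hookrightarrow\Perf(\MMN(\Qbar))$ by Proposition~\ref{prop:inj}), and it exhausts the left because every motive $M$ satisfies $M\in\langle M\rangle^{\tensor,\vee}$. Second, the map $\Perf(\MMN(\Qbar))\to\C$ is injective if and only if its restriction to each member of a directed union of subspaces whose union is everything is injective — this is the same elementary observation used in the proofs of Propositions~\ref{prop:equiv1} and \ref{prop:make_ab}. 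Combining these two points gives the equivalence directly, without even needing to identify $\langle M\rangle^{\tensor,\vee}$ with a $\langle N\rangle$; the tensor structure is used only to know that $\langle M\rangle^{\tensor,\vee}$ is again a full abelian subcategory closed under subquotients, so that Corollary~\ref{cor:full} and Proposition~\ref{prop:inj} apply to it.

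I expect the main subtlety to be purely bookkeeping: one must be sure that $\langle M\rangle^{\tensor,\vee}$, as defined (the smallest full abelian rigid tensor subcategory closed under subquotients containing $M$), really is closed under subquotients in $\MMN(\Qbar)$ in the sense required by Corollary~\ref{cor:full} — i.e. that taking subquotients inside $\langle M\rangle^{\tensor,\vee}$ agrees with taking them inside the ambient category. This holds because the inclusion $\langle M\rangle^{\tensor,\vee}\subset\MMN(\Qbar)$ is exact and the subcategory was defined to be closed under subquotients, so no new subobjects appear in the larger category. Once that is pinned down, the proof is a two-line consequence of Corollary~\ref{cor:full} (for the implication $(1)\Rightarrow(2)$) and Proposition~\ref{prop:equiv1} together with the containment $\langle M\rangle\subset\langle M\rangle^{\tensor,\vee}$ (for $(2)\Rightarrow(1)$). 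There is no hard analytic or cohomological input here; the content was already spent in establishing Theorem~\ref{thm:indep} and Proposition~\ref{prop:inj}.
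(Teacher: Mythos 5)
Your paragraph two is precisely the paper's argument: exhibit $\Perf(\MMN(\Qbar))$ as the directed union $\bigcup_M\Perf(\langle M\rangle^{\tensor,\vee})$ using Proposition~\ref{prop:inj} for the inclusions, then use that injectivity of a linear map on a directed union of subspaces is equivalent to injectivity on each piece; the paper simply says ``same proof as Proposition~\ref{prop:equiv1}'' and that proof is exactly this. Your alternative packaging in the last paragraph (reduce to Proposition~\ref{prop:equiv1} via $\langle M\rangle\subset\langle M\rangle^{\tensor,\vee}$ and Corollary~\ref{cor:full}) is also correct but is not needed and is not what the paper does, and the plan sketched in your first paragraph of identifying $\langle M\rangle^{\tensor,\vee}$ with some $\langle N\rangle$ is both unnecessary (as you yourself observe) and false in general, since $\langle M\rangle^{\tensor,\vee}$ is typically not generated by a single object under subquotients alone.
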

\begin{proof}Same proof as for Proposition~\ref{prop:equiv1}\end{proof}

The scheme $X(M):=\Spec(\Perf\langle M\rangle^{\tensor,\vee})$
is a $G(M)$-torsor. Hence it inherits all properties of the algebraic group $G(M)$ that can be tested after a faithfully flat base change. In particular it is smooth.

\begin{conj}[Grothendieck's period conjecture]
Let $M\in\MMN$. Then $X(M)$ is
connected and
\[ \dim G(M)=\trdeg \Q( \Per(M)).\]
\end{conj}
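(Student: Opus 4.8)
The plan is to relate Grothendieck's conjectural equality to the period conjecture for $\langle M\rangle^{\tensor,\vee}$ via the torsor structure, and then reduce the ``connectedness and dimension'' statement to a transcendence-degree computation on the coordinate ring of $X(M)$. First I would unwind the definitions: by Proposition~\ref{prop:equiv2} and the discussion following it, $X(M)=\Spec(\Perf\langle M\rangle^{\tensor,\vee})$ is a torsor under $G(M)$, and the evaluation map $\phi_T$ gives a $\Qbar$-point $X(M)(\C)$ whose image generates $\Per(M)$ as a $\Q$-algebra, i.e. $\Q(\Per(M))=\mathrm{Frac}$ of the image of the ring homomorphism $\Perf\langle M\rangle^{\tensor,\vee}\to\C$. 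So the right-hand side $\trdeg_\Q\Q(\Per(M))$ is exactly the transcendence degree of the subfield of $\C$ generated by the image of $\Oh(X(M))$, while $\dim G(M)=\dim X(M)$ (a torsor has the same dimension as its structure group, which can be checked after the faithfully flat base change that trivialises it).

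The key step is then the general fact: for an integral affine $\Qbar$-scheme $X$ of finite type and a $\Qbar$-point $x\in X(\C)$, one has $\trdeg_{\Qbar}\Qbar(x)\le\dim X$, with equality if and only if $x$ does not lie on any proper closed subvariety defined over $\Qbar$, i.e. $x$ is a generic point in the scheme-theoretic sense. So I would argue: if the period conjecture holds for $\langle M\rangle^{\tensor,\vee}$, then $\Perf\langle M\rangle^{\tensor,\vee}\to\C$ is injective, hence the image of $X(M)(\C)\to X(M)$ is the generic point, which forces $\trdeg=\dim$ on each connected component; since $X(M)$ is a $G(M)$-torsor and $G(M)$ is connected as an algebraic group (being the Tannaka dual of a neutral Tannakian category, its identity component is all of it once one checks there are no nontrivial finite quotients — but in general $G(M)$ need \emph{not} be connected, so this is where care is needed), one concludes $X(M)$ is connected and the equality holds. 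Conversely, if $X(M)$ is connected and $\dim G(M)=\trdeg_\Q\Q(\Per(M))$, then the $\Qbar$-point is generic, so the kernel of $\Oh(X(M))\to\C$ is contained in every maximal ideal of the generic point, hence is zero by the Nullstellensatz-type argument on integral schemes, giving injectivity, i.e. the period conjecture for $\langle M\rangle^{\tensor,\vee}$; by Proposition~\ref{prop:equiv2} this is the period conjecture for $\MMN(\Qbar)$.

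The main obstacle I anticipate is the connectedness bookkeeping: $G(M)$ is in general \emph{not} connected (e.g. when $M$ has a nontrivial Artin motive as a subquotient, $G(M)$ surjects onto a finite Galois group), so a torsor under it need not be connected a priori, and the statement ``$X(M)$ is connected'' is genuinely part of the conjecture rather than automatic. The careful point is that, given the period conjecture, $X(M)$ \emph{is} connected precisely because the single transcendental point lands in one component and the torsor structure then forces $G(M)$ itself to act transitively on $\pi_0(X(M))$ over $\Qbar$ while also fixing that component — one must check that the period point being ``generic'' over $\Q$ (not just over $\Qbar$) already rules out the other components, which is where the short exact sequence $0\to G_\mot(\Qbar)\to G_\mot(\Q)\to\Gal(\Qbar/\Q)\to 0$ and the separability arguments from the proof of the Galois-theory theorem re-enter. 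The other routine-but-fiddly point is matching $\trdeg_\Q$ versus $\trdeg_\Qbar$: since $\Qbar/\Q$ is algebraic these differ by nothing, but one should say a word about it because $\Per(M)$ is a $\Qbar$-algebra and the conjecture is phrased over $\Q$. Everything else — the dimension-of-torsor equality, the Nullstellensatz step, the translation through Proposition~\ref{prop:equiv2} — is standard.
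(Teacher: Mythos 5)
The statement you were handed is a \emph{conjecture}: neither the paper nor anyone else has a proof of Grothendieck's period conjecture, and there is no ``paper's proof'' of this statement to compare against. What the paper actually establishes (by reference to \cite{period-buch}) is the Proposition that immediately follows, namely that Grothendieck's formulation is equivalent to Kontsevich's period conjecture for the Tannakian subcategory $\langle M\rangle^{\tensor,\vee}$. Your argument is in substance a proof of that equivalence, which is worthwhile, but you should say so explicitly; as written it reads as though you were deriving Grothendieck's conjecture outright, which would be circular since you are assuming the period conjecture for $\langle M\rangle^{\tensor,\vee}$ in one direction.

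Taken as a proof of the equivalence, your approach matches the paper's: identify $X(M)=\Spec\bigl(\Perf\langle M\rangle^{\tensor,\vee}\bigr)$, use the torsor structure under $G(M)$ to get smoothness of $X(M)$ and $\dim X(M)=\dim G(M)$, and interpret the evaluation point $\mathrm{ev}\in X(M)(\C)$ being generic as injectivity of $\Perf\langle M\rangle^{\tensor,\vee}\to\C$. The remark that $\trdeg_\Q=\trdeg_{\Qbar}$ because $\Qbar/\Q$ is algebraic is correct and worth saying. However, your paragraph on connectedness is both overcomplicated and partially off-track. You do not need a transitivity argument on $\pi_0(X(M))$, nor the sequence $0\to G_\mot(\Qbar)\to G_\mot(\Q)\to\Gal(\Qbar/\Q)\to 0$ (which plays no role here since $\K=\Qbar$). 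The correct and much shorter argument is: if the period conjecture holds, $\Perf\langle M\rangle^{\tensor,\vee}$ injects into the domain $\C$, hence is itself a domain, hence $X(M)$ is irreducible and a fortiori connected, and $\ker(\mathrm{ev})=0$ is the generic point — all in one stroke, with no reference to whether $G(M)$ is connected. Smoothness is what you need for the converse: a connected smooth scheme is integral, so there is a unique generic point, and $\trdeg=\dim$ then forces $\ker(\mathrm{ev})=0$. That is exactly the ``crucial input'' the paper flags, and it replaces your Galois-descent digression.
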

In the case of pure motives, this is the formulation of Andr\'e in \cite[Chapitre~23]{andre2}.

\begin{prop}[{\cite[Conjecture~13.2.6]{period-buch}}]The following are equivalent:
\begin{enumerate}
\item the period conjecture holds for $\langle M\rangle^{\tensor,\vee}$;
\item the point $\mathrm{ev}\in X(M)(\C)$ is generic and $X(M)$ is connected;
\item Grothendieck's period conjecture holds for $M$.
\end{enumerate}
\end{prop}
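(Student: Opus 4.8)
The plan is to establish the chain of equivalences by exploiting the torsor structure $G(M)\times X(M)\to X(M)$ over $\K=\Qbar$, and translating the period conjecture (injectivity of $\Perf\langle M\rangle^{\tensor,\vee}\to\C$) into geometric statements about the evaluation point $\mathrm{ev}\in X(M)(\C)$.

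\begin{proof}
We abbreviate $\mathcal{C}_M=\langle M\rangle^{\tensor,\vee}$, $A=\Ah(\mathcal{C}_M)=\Oh(G(M))$ and $P=\Perf(\mathcal{C}_M)=\Oh(X(M))$, so that $G(M)=\Spec A$ and $X(M)=\Spec P$, and $\mathrm{ev}\colon P\to\C$ is the evaluation map whose image generates $\Per(M)$ as a $\Qbar$-algebra.

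First I would prove the equivalence of (i) and (ii). The period conjecture for $\mathcal{C}_M$ is the injectivity of $\mathrm{ev}\colon P\to\C$, i.e.\ that $P$ has no nonzero elements vanishing at $\mathrm{ev}$. Since $P$ is the affine coordinate ring of the finite-type $\Qbar$-scheme $X(M)$ (it is finitely generated because $G(M)$ is of finite type, being the Tannaka dual of a category generated by a single object, and $X(M)$ is a torsor under it), injectivity of $\mathrm{ev}$ says exactly that the closed point $\mathrm{ev}$ is not contained in any proper closed subscheme defined by a nonzero function---equivalently, that $\mathrm{ev}$ lies in every nonempty open, i.e.\ $\mathrm{ev}$ is a generic point, and moreover that $P$ is reduced with a unique minimal prime, i.e.\ $X(M)$ is irreducible (hence connected, since as a torsor under the smooth group $G(M)$ it is already smooth, so connected $\Leftrightarrow$ irreducible). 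Conversely, if $X(M)$ is connected and $\mathrm{ev}$ is generic, then $X(M)$ is integral (connected plus smooth gives integral), $P$ is a domain, and a generic point of an integral affine scheme has injective evaluation map. The one subtlety to check carefully is that ``$\mathrm{ev}$ generic'' should be read as: the image of the corresponding morphism $\Spec\C\to X(M)$ is the generic point; this forces $X(M)$ irreducible automatically, but I would keep the connectedness hypothesis explicit because one wants it stated before knowing $\mathrm{ev}$ is generic.

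Next I would prove the equivalence of (ii) and (iii). Here the torsor structure does the work: because $X(M)$ is a $G(M)$-torsor over $\Qbar$, after the faithfully flat base change along $\mathrm{ev}\colon\Spec\C\to\Spec\Qbar$ (or rather using any point making it trivial) we get $X(M)_\C\cong G(M)_\C$ as $\C$-schemes, hence $\dim X(M)=\dim G(M)$ and $X(M)$ is connected iff $G(M)$ is. Now $\Q(\Per(M))$ is the fraction field of the image $\mathrm{ev}(P)\subset\C$, and $\trdeg_\Q\Q(\Per(M))=\trdeg_\Qbar\mathrm{ev}(P)$ since $\Qbar/\Q$ is algebraic. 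The point $\mathrm{ev}$ is generic in $X(M)$ iff the kernel of $\mathrm{ev}\colon P\to\C$ is a minimal prime, in which case $\mathrm{ev}(P)\cong P/\mathfrak{p}$ for $\mathfrak{p}$ minimal and $\trdeg_\Qbar\mathrm{ev}(P)=\dim(X(M))$ (when $X(M)$ is connected, hence equidimensional here). If instead $\mathrm{ev}$ is not generic, the kernel strictly contains a minimal prime and the transcendence degree drops: $\trdeg_\Qbar\mathrm{ev}(P)<\dim X(M)$. Combining, ``$X(M)$ connected and $\mathrm{ev}$ generic'' is equivalent to ``$X(M)$ connected and $\trdeg_\Q\Q(\Per(M))=\dim X(M)=\dim G(M)$'', which is precisely Grothendieck's period conjecture for $M$ as stated. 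This closes the triangle.

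The main obstacle I expect is the bookkeeping around connectedness versus the transcendence-degree equality---specifically ensuring that ``$\dim G(M)=\trdeg\Q(\Per(M))$'' alone does not already force genericity and connectedness without extra input. In general, for a possibly reducible $X(M)$, one could imagine the evaluation point being generic in a component of maximal dimension while $X(M)$ has several components; then the dimension equality holds but the period conjecture fails. This is exactly why Grothendieck's formulation bundles in the connectedness clause, and why the middle condition (ii) does too. So the real content is the lemma: for a smooth affine $\Qbar$-variety $X$ (automatically reduced, and with components of possibly differing behaviour) and a $\C$-point $x$, injectivity of $\Oh(X)\to\C$ at $x$ is equivalent to ``$X$ connected and $x$ generic,'' equivalently to ``$X$ connected and $\dim X=\trdeg_\Qbar$ of the image.'' I would isolate this as a short commutative-algebra lemma and then feed it the torsor identity $\dim X(M)=\dim G(M)$; everything else is formal.
\end{proof}
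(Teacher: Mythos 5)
Your proposal is correct and takes essentially the same route as the paper: you use exactly the three ingredients the paper singles out as crucial, namely that $X(M)$ is smooth (as a torsor under an algebraic group), that smooth plus connected gives integral hence a unique generic point, and that $\dim X(M)=\dim G(M)$ by the torsor property. Your version is more detailed (the paper defers to \cite{period-buch}), but the underlying argument is identical.
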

\begin{proof}
See the proof of equivalence in \cite{period-buch}. The crucial input is
that $X(M)$ is smooth. Hence being connected makes it integral, so that there is only one generic point. Moreover, $X(M)$ and $G(M)$ have the same dimension.
\end{proof}

\section{Cycles and the period conjecture}
We now discuss the relation between conjectures on fullness of cycle class maps
and the period conjecture. 
\begin{prop}Let $\Ch\subset \MMN(\K)$ be a full abelian category closed under subquotients. If the period conjecture holds
for $\Ch$, then $H:\Ch\to \VV$ is fully faithful with image closed
under subquotients.
\end{prop}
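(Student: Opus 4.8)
The plan is to deduce this from Corollary~\ref{cor:faithful}, which already handles the abstract case of an abelian category $\Ch$ with a faithful exact functor $T$. The only thing to check is that the hypotheses of that corollary apply to the situation at hand. So first I would recall that $\MMN(\K)$ is abelian and that the functor $H:\MMN(\K)\to\VV$ (attaching to a Nori motive the pair of its de Rham and singular realisations linked by the period isomorphism) is faithful and exact, as noted just after Definition~\ref{defn:mot_period}. Since $\Ch\subset\MMN(\K)$ is a full abelian subcategory closed under subquotients, the restriction $H|_\Ch$ is still faithful and exact, so $\Ch$ with this restriction satisfies the standing hypothesis ``$\Ch$ abelian, $T$ faithful and exact.''

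Next I would observe that the period conjecture for $\Ch$, by definition, is the injectivity of $\Perf(\Ch)\to\Per(\Ch)\subset\C$. Applying Corollary~\ref{cor:faithful} verbatim with $T=H|_\Ch$ then yields that $H:\Ch\to\VV$ is fully faithful with image closed under subquotients. There is really nothing more to do: the statement is a special case of the corollary once one notes that $\MMN(\K)$ supplies an ambient abelian category with a faithful exact realisation functor.

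The one point that deserves a sentence of comment is the target category: Corollary~\ref{cor:faithful} is phrased with $\VValg=(\Qbar,\Q)\Vect$, i.e.\ $\K=\Qbar$, whereas here $\Ch\subset\MMN(\K)$ for a general subfield $\K\subset\Qbar$. If one wants the cleanest statement one can either restrict to $\K=\Qbar$ (consistent with the convention in the Grothendieck-conjecture section), or note that the argument of Corollary~\ref{cor:faithful} --- closure under subquotients via Proposition~\ref{prop:inj}, then promotion to fullness by passing to graphs of morphisms --- goes through over any $\K$, since Proposition~\ref{prop:inj} is stated for an arbitrary $\Q$-linear abelian category and the graph argument only uses that a morphism in $\VV$ has a graph which is a subobject.

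The main (and essentially only) obstacle is therefore purely bookkeeping: making sure that $H|_\Ch$ really is faithful and exact --- faithfulness is inherited from $\MMN(\K)\to\VV$, and exactness of $H|_\Ch$ follows because the inclusion $\Ch\hookrightarrow\MMN(\K)$ is exact (an abelian subcategory closed under subquotients) composed with the exact functor $H$. Once that is in place, the proof is a one-line invocation of Corollary~\ref{cor:faithful}.

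\begin{proof}
The category $\MMN(\K)$ is abelian and the realisation functor $H:\MMN(\K)\to\VV$ is faithful and exact. As $\Ch$ is a full abelian subcategory of $\MMN(\K)$ closed under subquotients, the inclusion $\Ch\hookrightarrow\MMN(\K)$ is exact; hence the restriction $H|_\Ch:\Ch\to\VV$ is again faithful and exact. The period conjecture for $\Ch$ is by definition the injectivity of $\Perf(\Ch)\to\Per(\Ch)$. Applying Corollary~\ref{cor:faithful} to the abelian category $\Ch$ with the faithful exact functor $T=H|_\Ch$ yields that $H:\Ch\to\VV$ is fully faithful with image closed under subquotients. (When $\K\subsetneq\Qbar$ one uses that Proposition~\ref{prop:inj}, and hence the argument of Corollary~\ref{cor:faithful}, is valid over any $\Q$-linear abelian category and target $(\K,\Q)\Vect$: closure under subquotients is obtained from Proposition~\ref{prop:inj}, and fullness then follows by observing that the graph $\Gamma\subset X\times Y$ of a morphism $f:X\to Y$ in $\VV$ between objects of $\Ch$ is a subobject, hence lies in $\Ch$, so that $f$ itself lies in $\Ch$.)
\end{proof}
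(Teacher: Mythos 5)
Your proof is correct and matches the paper's intent: the statement is a direct consequence of Corollary~\ref{cor:faithful} applied to $T=H|_\Ch$, once one notes that the restriction of the realisation functor to a full abelian subcategory closed under subquotients remains faithful and exact. You are right that the relevant corollary is \ref{cor:faithful} rather than \ref{cor:full} (the paper's own proof cites \ref{cor:full}, which only gives that the period conjecture passes to subcategories, not the fully-faithfulness conclusion — an apparent mis-citation). Your remark on the $\K$ versus $\Qbar$ bookkeeping is a reasonable precaution and does not affect the argument.
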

\begin{proof}This is a special case of Corollary \ref{cor:full}.
\end{proof}
In the Tannakian case this is already formulated in \cite[Proposition~13.2.8]{period-buch}. Indeed, it seems well-known. 

By definition $H$ is faithful, hence the interesting part about this is
fullness. This point is taken up in a version of the period conjecture
formulated by Bost and Charles in \cite{bost-charles} and generalised
by Andreatta, Barbieri-Viale and Bertapelle in \cite{ABB}. We  formulate the latter version.
Recall from \cite{harrer} the realisation functor
\[ \DMgm(\K)\to D^b(\MMN(\K)).\]
By composition with $\MMN(\K)\to \VV$ this gives us a functor
\[ H^0:\DMgm(\K)\to \VV.\]

\begin{defn}We say that the fullness conjecture holds
for $\Ch\subset \DMgm(\K)$ if
\[ H^0:\Ch\to\VV\]
is full.
\end{defn}

\begin{conj}[ {\cite[Section~1.3]{ABB}}]\label{conj:ABB}
Let $\Ch\subset \DMgm(\K)$ be a full additive subcategory. Then the fullness conjecture holds for $\Ch$.
\end{conj}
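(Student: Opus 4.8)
The plan is to reduce Conjecture~\ref{conj:ABB} to a single algebraicity statement about de Rham--Betti cycles on products of smooth projective varieties, in the spirit of \cite{bost-charles} and \cite{ABB}.

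First, since any full additive subcategory $\Ch\subset\DMgm(\K)$ has $\Hom_\Ch(M,N)=\Hom_{\DMgm(\K)}(M,N)$, the conjecture for all such $\Ch$ is equivalent to the single assertion that $H^0\colon\DMgm(\K)\to\VV$ is full; so it is enough to prove that for arbitrary geometric motives $M,N$ the realization map
\[ \Hom_{\DMgm(\K)}(M,N)\longrightarrow\Hom_\VV(H^0(M),H^0(N)) \]
is surjective. This is genuinely weaker than the full faithfulness that Corollary~\ref{cor:faithful} would extract from the period conjecture for $\MMN(\K)$, and we are free to choose $M$ and $N$ at will.

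Next I would reduce to good generators. Using resolution of singularities with $\Q$-coefficients (e.g.\ Gabber's $\ell'$-alterations) together with Bondarko's weight structure, $\DMgm(\K)$ is generated by the Tate twists and shifts $M(X)(j)[n]$ of motives of smooth projective $X/\K$, and every geometric motive is, up to direct summands, an iterated cone of such objects. Since the Nori--Harrer realization \cite{harrer} is exact, this reduces the problem to: (i) the pure case $M=M(X)(j)[n]$, $N=M(Y)(j')[n']$ with $X,Y$ smooth projective, where $H^0(M)$ is the pure motive $h^n(X)(j)$; and (ii) propagating surjectivity of the realization map along the distinguished triangles of the weight filtration. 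Step (ii) is the first delicate point: fullness, unlike full faithfulness, is not formally stable under cones, so one must control the long exact $\VV$-valued cohomology sequences attached to a triangle in $D^b(\MMN(\K))$ and split a $\VV$-morphism out of $H^0$ into its weight-graded pieces; this wants the pure realizations to land in a semisimple subcategory of $\VV$, which is the familiar standard-conjectures obstruction and has to be isolated with care.

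For the pure case itself, $\Hom_{\DMgm(\K)}(M,N)$ is computed by (higher) Chow groups of $X\times Y$, the essential components being the Chow groups $\mathrm{CH}^c(X\times Y)_\Q$ of algebraic cycles. On the other side, splitting along de Rham--Betti K\"unneth components identifies the relevant part of $\Hom_\VV(H^0(M),H^0(N))$ with the group of \emph{de Rham--Betti classes} of degree $2c$: pairs $(\alpha_\dR,\alpha_\sing)$ with $\alpha_\dR\in H^{2c}_\dR(X\times Y)(c)$ and $\alpha_\sing\in H^{2c}_\sing(X\times Y;\Q(c))$ that match under the period isomorphism. So the conjecture becomes the assertion that every de Rham--Betti cycle class on a product of smooth projective $\K$-varieties is algebraic. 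This is the main obstacle, and it is where the conjecture truly lies: one would first have to show that de Rham--Betti compatibility forces such a class to be a Hodge class --- which is not automatic from the definition of $\VV$ and is precisely a Grothendieck-period-conjecture-type constraint for the motive $h^\ast(X\times Y)$ --- and then invoke the Hodge conjecture, or bypass it via Andr\'e's motivated cycles. For abelian varieties and products of elliptic curves over $\Qbar$ this algebraicity can be reached through W\"ustholz's analytic subgroup theorem, as in \cite{huber-wuestholz}, which is why the statement is known in those cases; in general it is open, and this cycle-theoretic input --- not the categorical bookkeeping of the first two steps --- is the heart of the difficulty.
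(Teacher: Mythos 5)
There is a fundamental problem here: the statement you were asked about is a \emph{conjecture} (quoted from \cite[Section~1.3]{ABB}), and the paper neither proves it nor claims to; it only establishes conditional or special cases (fullness for Chow motives assuming the period conjecture for pure Nori motives plus the K\"unneth and Lefschetz standard conjectures, fullness for $\DMgm(\K)$ under the strong hypotheses of Proposition~\ref{prop:full_gm}, the equivalence with irrationality of odd zeta values for $\DMT$ in Proposition~\ref{prop:mt}, and the unconditional $1$-motive case via \cite{huber-wuestholz}). Your proposal, by your own admission in its last paragraph, is not a proof either: after the categorical reductions it rests on the algebraicity of arbitrary de Rham--Betti classes on products of smooth projective varieties, which is exactly the open Bost--Charles-type conjecture, itself entangled with a Grothendieck-period-conjecture statement (de Rham--Betti implies Hodge) and with the Hodge conjecture or motivated cycles. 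So what you have produced is a reduction of one open conjecture to several other open conjectures, not a proof of the statement.

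Even viewed only as a reduction, the weight-propagation step is a genuine gap rather than a ``delicate point'': fullness is not stable under cones, and splitting a $\VV$-morphism out of $H^0$ along the weight filtration requires semisimplicity of the pure realizations inside $\VV$ together with degeneration-type statements, i.e.\ standard-conjecture input that you have not supplied and cannot currently supply. Note that the paper's own conditional fullness results avoid this issue entirely by working in situations of cohomological dimension $1$ with a $t$-structure (so that every object splits into its cohomology objects), or by staying within an abelian category; they never attempt the triangulated weight d\'evissage you sketch. Your framing of the pure case as algebraicity of de Rham--Betti cycle classes is a correct and useful way to see where the content of Conjecture~\ref{conj:ABB} lies, and it matches the cycle-theoretic perspective of \cite{bost-charles} and \cite{ABB}; but it should be presented as an equivalence or heuristic, not as a proof strategy with a missing last step.
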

\begin{rem}
\begin{enumerate}
\item
Andreatta, Barbieri-Viale and Bertapelle also consider refined integral information. We do not go into this here. 
\item This terminology deviates from \cite{ABB} where the term period conjecture is used. 
\end{enumerate}
\end{rem}

Arapura established in \cite{arapura} (see also  \cite[Theorem~10.2.7]{period-buch}) that the category of pure Nori motives is equivalent to Andr\'es's abelian category $\AM$ of motives via motivated cycles.

\begin{prop}If the period conjecture holds for pure Nori motives and
the K\"unneth and Lefschetz standard conjectures are true, then the fullness conjecture
holds for Chow motives.
\end{prop}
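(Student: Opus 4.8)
The plan is to reduce the statement to an application of Corollary~\ref{cor:faithful} combined with known fullness results for Chow motives. First I would set up the chain of categories: let $\CHM(\Qbar)$ denote Chow motives with $\Q$-coefficients, and recall that the classical functor sending a Chow motive to its pair of realisations factors through the category $\AM$ of Andr\'e's motives built from motivated cycles (this uses the K\"unneth and Lefschetz standard conjectures, which guarantee that the relevant idempotents defining motivated cycles are algebraic, so that the category of Andr\'e motives is actually the pseudo-abelian category generated by Chow motives under motivated correspondences). By Arapura's theorem cited just above, $\AM$ is equivalent, as a Tannakian category, to the category of pure Nori motives $\MMN^{\mathrm{pure}}(\Qbar)$, compatibly with the singular (and de Rham) fibre functors.

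Next I would invoke Corollary~\ref{cor:faithful}: pure Nori motives form an abelian category with a faithful exact fibre functor $T=H$ to $\VValg$, and by hypothesis the period conjecture holds for it; hence $H:\MMN^{\mathrm{pure}}(\Qbar)\to\VValg$ is fully faithful with image closed under subquotients. Transporting along Arapura's equivalence, the realisation functor $\AM\to\VValg$ is fully faithful as well. Finally, the functor from Chow motives to $\VValg$ factors as $\CHM(\Qbar)\to\AM\to\VValg$, where the first arrow is essentially surjective onto a generating subcategory and, more to the point, is full: a morphism of Chow motives is a correspondence modulo rational equivalence, a morphism in $\AM$ is a motivated correspondence, and a morphism in $\VValg$ between realisations of Chow motives is, by the fullness of $\AM\to\VValg$, a motivated correspondence; the standard conjectures (Lefschetz in particular) then identify motivated correspondences between the motives of smooth projective varieties with classes coming from algebraic cycles, hence with morphisms of Chow motives. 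Composing, $H^0:\CHM(\Qbar)\to\VValg$ is full, which is exactly the fullness conjecture for Chow motives.

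The main obstacle, and the step requiring the most care, is the identification of morphisms at the level of Chow motives with morphisms at the level of $\AM$ (equivalently, pure Nori motives): fullness of $\CHM\to\AM$ is precisely where the Lefschetz standard conjecture enters, since motivated cycles are by definition generated by algebraic cycles together with the inverses of the Lefschetz operators, and one needs those inverses to be realised by genuine algebraic cycles to descend a motivated correspondence to a Chow correspondence. One should also be slightly careful about base fields: Corollary~\ref{cor:faithful} is stated over a general $\K\subset\Qbar$, but the cleanest formulation of Arapura's result and of the standard conjectures is over $\Qbar$ (or an algebraically closed field), which is why the statement is phrased for $\MMN(\Qbar)$; the independence of $\K$ established in Theorem~\ref{thm:indep} can be used if one wants the conclusion over smaller fields. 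Everything else is formal chasing through the equivalences.
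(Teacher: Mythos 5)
Your proposal is correct and is essentially the paper's proof: the paper factors through Grothendieck motives (with $\CHM\to$ Grothendieck full by definition, since one only coarsens the equivalence relation on cycles, and Grothendieck motives $=\AM=$ pure Nori under the standard conjectures), then applies fullness of $H$ from the period conjecture; you package the first two steps into a single claim that $\CHM\to\AM$ is full under the standard conjectures, which is the same content with the intermediate category of Grothendieck motives left implicit.

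One small remark: when you say the standard conjectures identify motivated correspondences with ``classes coming from algebraic cycles, hence with morphisms of Chow motives,'' the word ``hence'' is exactly the place where the paper's explicit appeal to fullness of $\CHM\to$ Grothendieck motives (cycles mod rational equivalence surject onto cycles mod homological equivalence) is doing the work; spelling that out as the paper does makes the argument watertight rather than leaving it as a silent jump from homological to rational equivalence.
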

\begin{proof}
The functor from Chow motives to Grothendieck motives is full by definition.
The K\"unneth and Lefschetz standard conjectures together  imply that the category of
Grothendieck motives is abelian and agrees with $\AM$, hence with the
category of pure Nori motives. By the period conjecture, the functor
to $\VValg$ is full. Together this proves fullness of the composition.
\end{proof}

A similar result actually holds for the full category of geometric motives. Its assumptions are very strong, but are indeed expected to be true.

\begin{prop}\label{prop:full_gm}We assume
\begin{enumerate}
\item the period conjecture holds for all Nori motives;
\item the category $\DMgm(\K)$ has a $t$-structure compatible with the singular realisation;
\item  the singular realisation is conservative;
\item the category $\DMgm(\K)$ is of cohomological dimension $1$.
\end{enumerate}
Then the fullness conjecture holds
for $\DMgm(\K)$.
\end{prop}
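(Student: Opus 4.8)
The plan is to reduce the statement about the triangulated category $\DMgm(\K)$ to the statement about the abelian heart $\MMN(\K)$, where Corollary~\ref{cor:faithful} already gives fullness. The four hypotheses are exactly what is needed to make this reduction work: hypothesis (2) provides a $t$-structure whose heart, by compatibility with the singular realisation, must be $\MMN(\K)$ (using the universal property of Nori motives, Remark~\ref{rem:univ}); hypothesis (3) says the realisation detects whether a morphism is an isomorphism; and hypothesis (4) collapses the possible Ext-groups so that every object is non-canonically the sum of its cohomology objects.

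First I would fix the $t$-structure from (2) and identify its heart $\mathcal{H}$. Since the realisation $\DMgm(\K)\to D^b(\VV)$ is compatible with the $t$-structure, it sends $\mathcal{H}$ to $\VV$ (in degree $0$), and by conservativity (3) this restriction is faithful and exact; by the universal property of the diagram category this forces $\mathcal{H}\simeq\MMN(\K)$, so $H^0$ restricted to $\mathcal{H}$ is the functor $\MMN(\K)\to\VV$. Second, I would invoke hypothesis (1) together with Corollary~\ref{cor:faithful}: the period conjecture for all Nori motives implies that $\MMN(\K)\to\VV$ is fully faithful with image closed under subquotients. Third, I would use cohomological dimension $1$: for any two objects $M,N\in\DMgm(\K)$ one has $\Hom_{\DMgm}(M,N)$ computed from the finitely many groups $\Hom_{\mathcal{H}}(H^i(M),H^j(N))$ and $\Ext^1_{\mathcal{H}}(H^i(M),H^{i+1}(N))$, and similarly on the $\VV$-side (here one needs that $D^b(\VV)$, or rather $D^b$ of the closure of $\MMN(\K)$ in $\VV$, also has cohomological dimension $\le 1$, which follows since that closure is a full subcategory closed under subquotients and the Ext-groups inject). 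Comparing the two spectral sequences (which degenerate at $E_2$ because only two columns survive), fullness of $H^0$ on all of $\DMgm(\K)$ reduces to fullness of $H^0$ on $\mathcal{H}$ in the groups $\Hom$ and $\Ext^1$ — the $\Hom$ part is Corollary~\ref{cor:faithful}, and the $\Ext^1$ part follows because $\Ext^1_{\MMN(\K)}(A,B)=\Hom_{D^b(\MMN(\K))}(A,B[1])$ and the realisation is fully faithful on the heart with image closed under subquotients, so extensions correspond.

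The main obstacle is the $\Ext^1$-comparison: fully faithfulness of an exact functor between abelian categories on $\Hom$-groups does not automatically give an isomorphism on $\Ext^1$-groups, even when the image is closed under subquotients. One has to argue that every extension of $H^0(N)$ by $H^0(M)$ in $\VV$ (equivalently in the subcategory-closure of $\MMN(\K)$, by Proposition~\ref{prop:inj}) is itself a Nori motive — which is exactly where ``closed under subquotients'' is used, since the middle term of such an extension is an object of $\VValg$ admitting $H^0(M)$ as a subobject and $H^0(N)$ as quotient, hence lies in $\MMN(\K)$ — and then that it comes from a genuine extension in $\MMN(\K)$, which is automatic once the middle term is in $\MMN(\K)$ and $\MMN(\K)\hookrightarrow\VV$ is exact. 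The remaining bookkeeping — checking that cohomological dimension $1$ really does force the degeneration and the five-term exact sequence comparison — is routine homological algebra once the heart has been correctly identified.
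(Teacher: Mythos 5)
Your first two steps — identifying the heart with $\MMN(\K)$ via conservativity and the universal property, and invoking Corollary~\ref{cor:faithful} for fullness of $\MMN(\K)\to\VV$ — are exactly the paper's argument. But the third step goes off the rails because of a misreading of what the fullness conjecture asks for. The functor $H^0:\DMgm(\K)\to\VV$ sends $M$ to the single object $H^0_\Nori(M)\in\MMN(\K)\subset\VV$; the target of the map whose surjectivity you need is $\Hom_\VV(H^0(M),H^0(N))$, a plain $\Hom$-group in an abelian category. There is no spectral sequence or $\Ext^1$-comparison ``on the $\VV$-side'' because the target is not a derived-category $\Hom$. The paper's proof exploits exactly this: cohomological dimension $1$ lets you write $M_k\cong\bigoplus_i H^i(M_k)[-i]$, and all summands with $i\neq 0$ die under $H^0$, so the target is zero in those cases and surjectivity is automatic. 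Only the $(i_1,i_2)=(0,0)$ summand contributes, and there the map is literally $\Hom_{\MMN}(N_1,N_2)\to\Hom_\VV(N_1,N_2)$, which is surjective by Corollary~\ref{cor:faithful}. You are instead trying to prove the stronger claim that the realisation $\DMgm(\K)\to D^b(\VV)$ is full on the triangulated level, which the proposition does not assert.

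Moreover, even for that stronger claim your key step is wrong. You argue that a $\VV$-extension $0\to B\to E\to A\to 0$ with $A,B\in\MMN(\K)$ forces $E\in\MMN(\K)$ ``since the middle term admits $B$ as a subobject and $A$ as quotient, hence lies in $\MMN(\K)$''. That is the property of being \emph{closed under extensions}, not \emph{closed under subquotients}. Closure under subquotients says: if $Y\in\MMN(\K)$ and $X$ is a subquotient of $Y$ in $\VV$, then $X\in\MMN(\K)$. It tells you nothing when the ambient object $E$ is not already known to be a motive. What you can legitimately extract from fullness plus closure under subquotients is injectivity of $\Ext^1_{\MMN}(A,B)\to\Ext^1_\VV(A,B)$ (a split extension in $\VV$ stays split in $\MMN$ because the splitting map lifts), not surjectivity. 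Fortunately none of this is needed: once you restrict attention to the target group that the fullness conjecture actually involves, the $\Ext^1$ difficulty evaporates and the proof closes in one line.
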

\begin{proof}The period conjecture gives again fullness of $\MMN(\K)\to\VV$.
It remains to check surjectivity of
\begin{equation}\label{eq:surj}\tag{*} \Hom_{\DMgm(\K)}(M_1,M_2)\to \Hom_{\MMN(\K)}(H^0_\Nori(M_1),H^0_\Nori(M_2))\end{equation}
for all $M_1,M_2\in\DMgm(\K)$.

Let $\MM$ be the heart of the $t$-structure. By conservativity, $H^0$ is faithful. By the universal property of Nori motives, this implies $\MM\isom \MMN(\K)$. As the cohomological dimension is $1$, this implies
$\DMgm(\K)\isom D^b(\MMN(\K))$. 
It also implies that every object of $\DMgm(\K)$ is (uncanonically) isomorphic to the direct sum of its cohomology objects. Hence it suffices to consider
$M_1=N_1[i_1]$, $M_2=N_2[i_2]$ for $N_1,N_2\in\MM$, $i_1,i_2\in\Z$. We have $H^0(M_k)=0$ unless
$0=i_1,i_2$. Hence surjectivity in (\ref{eq:surj}) is trivial for $i_1\neq 0$ or $i_2\neq 0$.  
In the remaining case $i_1=i_2=0$, we have equality in equation (\ref{eq:surj}), so again, the map is surjective.
\end{proof}

\begin{rem}The converse is not true, the fullness conjecture does \emph{not} imply
the period conjecture. An explicit example is given in \cite[Remark~5.6]{huber-wuestholz}. See also the case of mixed Tate motives, Proposition~\ref{prop:mt}.

On the other hand, the condition on the cohomological dimension is necessary if we want to conclude from fully faithfulness on the level of abelian categories to the triangulated category. Note that $\VV$ has cohomological dimension $1$. 
\end{rem}

\section{Examples}
\subsection{Artin motives}
The category of Artin motives over $\K$ can be described as the full subcategory of
$\MMN(\K)$ generated by motives of $0$-dimensional varieties. The Tannakian
dual of this category is nothing but $\Gal(\bar{\K}/\K)$. The space of
periods is $\Qbar$. The torsor structure is the one described in Section~\ref{ssec:galois}. For more details see also \cite[Section~13.3]{period-buch}.

\begin{rem}The period conjecture and the fullness conjecture hold true for Artin motives.
\end{rem}

\subsection{Mixed Tate motives}
Inside the triangulated category $\DMgm(\Spec(\Q),\Q)$ let $\DMT$ the full thick rigid tensor triangulated subcategory generated by the Tate motive $\Q(1)$. Equivalently, it is the full thick triangulated subcategory generated by the set of $\Q(i)$ for $i\in\Z$.
By deep results of Borel it carries a $t$-structure, so that we get a well-defined \emph{abelian} category $\MT$ of mixed Tate motives over $\Q$, see \cite{LeMTM}. Deligne and Gochanarov find a smaller abelian tensor subcategory $\MTZ$ of 
\emph{unramified} mixed Tate motives, see \cite{deligne-goncharov}. Its distinguishing feature is that
\[ \Ext^1_{\MTZ}(\Q(0),\Q(1))=\Z^*\tensor_\Z\Q=0\subsetneq \Ext^1_{\MT}(\Q(0),\Q(1))=\Q^*\tensor_\Z\Q.\]
By \cite{deligne-goncharov}, the periods of $\MTZ$ are precisely the famous multiple zeta values. The Tannaka dual of $\MTZ$ is very well-understood.
Using Galois theory of periods this information translates into information
on $\Perf(\MTZ)$. The surjectivity of $\Perf(\MTZ)\to \Per(\MTZ)$ then yields
upper bounds on the dimensions of spaces of multiple zeta values. For more
details see \cite{deligne-goncharov} or \cite[Chapter~15]{period-buch}.

\begin{rem}
The period conjecture and the 
fullness conjecture are open for $\MTZ$.
\end{rem}
Indeed, the period conjecture for $\MTZ$ implies statements like the algebraic independence of the values of Riemann $\zeta$-function $\zeta(n)$ for $n\geq 3$ odd that are wide open. The fullness conjecture is a lot  weaker.

\begin{prop}\label{prop:mt}The fullness conjecture holds for $\DMT$ if and only if $\zeta(n)$ is irrational for $n\geq 3$ odd.
\end{prop}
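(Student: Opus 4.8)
The plan is to unwind the fullness conjecture for $\DMT$ into an explicit numerical statement about the de Rham–Betti comparison on the relevant motives, and then match it against the classical fact that $\zeta(2k)$ is a rational multiple of $\pi^{2k}$ while the odd zeta values are the only remaining potential obstruction in low weight. First I would use Proposition~\ref{prop:full_gm}'s circle of ideas restricted to the mixed Tate setting: the category $\DMT$ has the Borel $t$-structure with heart $\MT$, the singular realisation is conservative on $\DMT$, and $\DMT$ has cohomological dimension $1$ (the $\Ext^i$ between Tate objects vanish for $i\geq 2$). Hence $\DMT\isom D^b(\MT)$ and, exactly as in the proof of Proposition~\ref{prop:full_gm}, the fullness conjecture for $\DMT$ is equivalent to the fullness of $H^0:\MT\to\VVQ$ on the abelian category $\MT$, i.e.\ to surjectivity of $\Hom_{\MT}(M_1,M_2)\to\Hom_{\VVQ}(H(M_1),H(M_2))$ for all $M_1,M_2\in\MT$.

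Next I would reduce this fullness statement for $\MT$ to a single nonvanishing/vanishing question about extension groups. Since $\MT$ is generated under extensions by the $\Q(n)$, and $H$ is faithful and compatible with weights and Tate twists, the only way $H$ can fail to be full is if some morphism of realisations fails to lift; by dévissage along the weight filtration this is governed by whether the realisation map
\[ \Ext^1_{\MT}(\Q(0),\Q(n))\to \Ext^1_{\VVQ}(H(\Q(0)),H(\Q(n)))\]
is surjective for each $n\geq 1$. The target is computed from the $(\K,\Q)$-vector space formalism: $\Ext^1$ in $\VVQ$ between $\Q(0)$ and $\Q(n)$ is $\C/(2\pi i)^n\Q$ (a one–dimensional de Rham datum glued to the Betti lattice $(2\pi i)^n\Q$ by a complex ''period'' of the extension). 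On the motivic side, $\Ext^1_{\MT}(\Q(0),\Q(n))$ is, by Borel and the results recalled in the excerpt, $\Q$ for $n$ odd $\geq 3$, zero for $n$ even $\geq 2$, and $\Q^*\tensor\Q$ for $n=1$. For $n=1$ the map is surjective (Lindemann: $\log$ of algebraic numbers realise all the relevant periods modulo $2\pi i\Q$, or more simply the $n=1$ statement reduces to ordinary nonvanishing of logarithms, which is classical). For $n$ even the source is zero and the target is also zero precisely because $\zeta(n)\in\pi^n\Q$ makes the would–be extension split in $\VVQ$ — so fullness is automatic in even weight. The entire content of the fullness conjecture for $\DMT$ therefore concentrates in odd weights $n\geq 3$: fullness holds iff for every odd $n\geq 3$ the one–dimensional space $\Ext^1_{\MT}(\Q(0),\Q(n))$, whose nonzero class has period $\zeta(n)$ (up to $\Qbar^\times$ and up to $(2\pi i)^n\Q$), maps to a nonzero element of $\C/(2\pi i)^n\Q$, i.e.\ iff $\zeta(n)\notin(2\pi i)^n\Q$; and since $(2\pi i)^n\Q\cap\R=0$ for $n$ odd while $\zeta(n)$ is real, this is exactly the statement that $\zeta(n)\neq 0$ in $\C/(2\pi i)^n\Q$, i.e.\ that $\zeta(n)$ is irrational.

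The main obstacle I expect is the bookkeeping in the second step: one must be careful that ''fullness of $H^0$ on $\DMT$'' really does reduce cleanly to the single family of $\Ext^1$ comparisons and that no higher Yoneda data (e.g.\ $\Ext^1(\Q(m),\Q(n))$ for $m\neq 0$, or morphisms between reducible objects) contributes an extra obstruction — this needs the compatibility of $H$ with the weight filtration together with the semisimplicity of the associated graded, so that a morphism of realisations is a sum of morphisms between Tate twists plus extension–class data, each piece of which is controlled by the $\Ext^1(\Q(0),\Q(n))$ computation after twisting. A secondary subtlety is pinning down that the nonzero class in $\Ext^1_{\MT}(\Q(0),\Q(n))$ for odd $n$ has period exactly a nonzero $\Q$-multiple of $\zeta(n)$ modulo $(2\pi i)^n\Q$; this is standard (it is the Kummer–type extension underlying the cyclotomic/Borel regulator, realised by the mixed Tate motive of the relevant polylogarithm), but it is the one input that connects the abstract category $\MT$ to the concrete number $\zeta(n)$, and it must be cited rather than re-proved. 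Once these are in place, the equivalence with the irrationality of the odd zeta values is immediate from the reality of $\zeta(n)$ and the purely imaginary nature of $(2\pi i)^n$ for $n$ odd.
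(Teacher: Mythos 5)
Your overall strategy matches the paper's: reduce via cohomological dimension $1$ to fullness of $\MT\to\VVQ$, then reduce that (Deligne--Goncharov d\'evissage) to injectivity of the realisation on $\Ext^1$ groups, then compute. Two issues, one cosmetic and one fatal.

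The cosmetic one: you phrase the $\Ext^1$ condition as ``surjective'' but the relevant condition for fullness is \emph{injectivity} of $\Ext^1_{\MT}(\Q(0),\Q(n))\to\Ext^1_{\VVQ}(\Q(0),\Q(n))$. (If this map has a kernel, the corresponding nonsplit extension in $\MT$ has split realisation, and the splitting map cannot lift, so $H$ is not full; conversely, injectivity on these $\Ext^1$'s gives fullness by the d\'evissage you sketch.) You do in effect check injectivity in the odd-weight case, so this reads as a slip rather than a misconception, but it propagates confusion into your treatment of $n=1$ (you want $\Q$-linear independence of $\{1\}\cup\{\log p\}$, not mere nonvanishing of logs) and $n$ even (you claim the target is zero, which is false; what matters is only that the source is zero).

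The fatal one: you compute $\Ext^1_{\VVQ}(\Q(0),\Q(n))=\C/(2\pi i)^n\Q$. That is the $\Ext^1$ in $\Q$-mixed Hodge structures, not in $\VVQ$. In $\VVQ$ you are allowed to change \emph{both} the de Rham splitting (contributing $\Q(n)_\dR=\Q$) and the Betti splitting (contributing $\Q(n)_\sing=(2\pi i)^n\Q$), so the group is the quotient
\[
\Ext^1_{\VVQ}(\Q(0),\Q(n))=\C/\langle 1,(2\pi i)^n\rangle_\Q ,
\]
which is a further quotient of $\C/(2\pi i)^n\Q$. This extra $\langle 1\rangle_\Q$ in the denominator is exactly where irrationality enters. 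With your computation, the image $\zeta(2m-1)$ being nonzero modulo $(2\pi i)^{2m-1}\Q$ is automatic (it is real and nonzero while $(2\pi i)^{2m-1}\Q$ is purely imaginary), so your argument as written proves injectivity unconditionally and does not see the irrationality hypothesis at all; your final sentence ``i.e.\ that $\zeta(n)$ is irrational'' is a non sequitur under your own computation. With the correct target $\C/\langle 1,(2\pi i)^n\rangle_\Q$, nonvanishing of the image of $\zeta(2m-1)$ splits into two conditions: $\Q$-independence from $(2\pi i)^{2m-1}$ (automatic, as you note) and $\Q$-independence from $1$ (i.e.\ irrationality), recovering the statement of the proposition and, crucially, also making the converse direction honest: fullness $\Rightarrow$ injectivity of $\Ext^1$ $\Rightarrow$ $\zeta(2m-1)\notin\langle 1,(2\pi i)^{2m-1}\rangle_\Q$ $\Rightarrow$ $\zeta(2m-1)$ irrational.
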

\begin{proof}Assume the irrationality condition. By \cite{deligne-goncharov}, the category $\MT$ has cohomological dimension $1$ and $D^b(\MT)$ is equivalent to $\DMT$. Hence we can use the same reasoning as in the proof of Proposition~\ref{prop:full_gm}. It suffices to show that $\MT\to \VVQ$ is fully faithful. We address this issue with the same argument as in the proof of \cite[Proposition~2.14]{deligne-goncharov} treating the functor from $\MTZ$ to mixed realisations. It remains to check injectivity of
\[ \Ext^1_{\MT}(\Q,\Q(n))\to \Ext^1_{\VVQ}(\Q,\Q(n))\]
for all $n\in\Z$. Here $\Q(n)$ denotes the object of rank $1$ given by
$\Q(n)_\dR=\Q, \Q(n)_\sing=(2\pi i)^n\Q$ and the natural comparison isomorphism.

By Borel's work, the left hand side is known to vanish for $n\leq 0$ and $n\geq 2$ even, hence it suffices to consider $n=2m-1$ for $m\geq 1$. By \cite[Proposition~4.3.2 and Proposition~4.2.3]{huber_1604}
\[ \Ext^1_{\VVQ}=\coker\left( \Q(n)_\dR\oplus \Q(n)_\sing\to \C\right)=\C/\langle 1,(2\pi i)^n\rangle_\Q .\]
This is a quotient of
\begin{equation}\label{eq:ext} \Ext^1_{\MHS_\Q}(\Q,\Q(n))\isom \C/(2\pi i)^n\Q.\end{equation}
We first consider $n=m=1$, the Kummer case. The left hand side is
$\Q^*\tensor\Q$ and the map is the Dirichlet regulator $u\mapsto \log|u|$.
The prime numbers give a basis of $\Q^*\tensor\Q$. By Baker's theorem, their
logarithms are $\Qbar$-linearly independent. They are also real whereas $2\pi i$ is imaginary. Hence the regulator map (\ref{eq:ext})
is injective.

Now let $m\geq 2$.
From the well-known explicit computation of the Deligne regulator, we know that the image of a generator of
$\Ext^1_{\MT}(\Q,\Q(2m-1))$ is given by $\zeta(2m-1)\in \C/\langle 1,(2\pi i)^{2m-1}\rangle$. It is real hence $\Q$-linearly independent of $(2\pi i)^{2m-1}\in i\R$. If it is irrational, then it is $\Q$-linearly independent of $1$ as well.

Now assume conversely that the fullness conjecture holds for $\DMT$. This implies that $\MT\to \VVQ$ is fully faithful. We claim that (\ref{eq:ext}) is injective. Take an element on the right, i.e., a short exact sequence
\[ 0\to \Q(n)\to E\to \Q(0)\to 0\]
in $\MT$ whose image in $\VVQ$ is split. This means that there is a splitting morphism $\Q(0)\to E$ in $\VVQ$. By fullness it comes from a morphism in $\MT$. This shows injectivity of (\ref{eq:ext}). By the explicit computation this implies
that $\zeta(2m-1)$ is $\Q$-linearly independent of $1$, i.e., irrational.
\end{proof}

We see that the fullness statement is much weaker that than the period conjecture. There are even partial results due to Ap\'ery and Zudilin.
\subsection{The case of $1$-motives}
The situation for $1$-motives is quite similar to the case of $0$-motives: 
there is an abelian category $\onemot_\K$ of iso-$1$-motives over $\K$. It was originally defined by Deligne, see \cite{hodge3}. Its derived category
is equivalent the subcategory of $\DMgm(\K)$ generated by motives
of varieties of dimension at most $1$. This result is due to Orgogozo, see \cite{orgogozo}, see also Barbieri-Viale and Kahn \cite{BVK}. There is a significant difference to the $0$-dimensional case, though: $\onemot_\K$ is not closed under tensor products, so it does not make sense to speak about torsors. However, our notion of a semi-torsor is built to cover this case: $\Perf(\onemot_\K)$ is semi-torsor
under the coalgebra $\Ah(\onemot_\K)_\K$. 

The main result of \cite{huber-wuestholz} settles the period conjecture:

\begin{thm}[{\cite{huber-wuestholz}}]\label{thm:1-mot}
The evalution map $\Perf(\onemot_\Qbar)\to\Perf(\onemot_\Qbar)$ is injective, i.e., the period conjecture holds for $\onemot_\Qbar$.
\end{thm}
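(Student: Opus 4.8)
The plan is to reduce the statement to an explicit transcendence theorem about periods of abelian varieties, semi-abelian varieties and tori, and then invoke the analytic subgroup theorem of W\"ustholz together with its refinements. First I would use the semi-torsor formalism developed above: since $\onemot_\Qbar$ is an abelian category and the functor $T$ to $\VValg$ is faithful and exact, Proposition~\ref{prop:inj} and Corollary~\ref{cor:faithful} show that the period conjecture for $\onemot_\Qbar$ is equivalent to the statement that $T:\onemot_\Qbar\to\VValg$ is fully faithful with image closed under subquotients, equivalently that the natural map $\Ah(\onemot_\Qbar)\to\Ah(\VValg\text{-closure})$ is injective. Equivalently, by the discussion preceding Proposition~\ref{prop:make_ab}, it suffices to prove fullness and the subquotient-closure property object by object, i.e.\ for $\langle M\rangle$ for every $1$-motive $M$, as in Proposition~\ref{prop:equiv1}.

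Next I would break a general $1$-motive $M=[L\to G]$ (with $G$ an extension of an abelian variety $A$ by a torus $T$, and $L$ a lattice) into its weight pieces: the pure parts $\Q(0)$, $H_1(A)$, $\Q(1)^{\dim T}$ and the extension data recording the points of $G$ and the dual. The comparison between de Rham and singular realisations is then governed by the periods of $A$ (the classical abelian integrals), the periods of $\Gm$ (powers of $2\pi i$), logarithms of algebraic numbers, elliptic and abelian logarithms of algebraic points, and the quasi-periods coming from differentials of the second kind. The key step is to show that the only $\Qbar$-linear relations among all these numbers, for all $M$ simultaneously, are the ones forced by functoriality and bilinearity — i.e.\ that the period matrix is as non-degenerate as the motivic Galois group (here the Mumford--Tate-type group attached to the $1$-motive) allows. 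This is exactly what the main transcendence input of \cite{huber-wuestholz} provides: a full determination, via the analytic subgroup theorem applied to the algebraic group $G\times G^\vee$ (or rather the relevant vectorial extension), of all linear relations among the periods of the $1$-motive in terms of its sub-$1$-motives.

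Concretely, the chain of implications I would assemble is: (i) translate ``a $\Qbar$-linear relation among formal periods of $M$ maps to zero'' into ``the evaluation point $\mathrm{ev}$ lies on a proper closed subscheme of $X(M)$ defined over $\Qbar$''; (ii) use that $X(M)$ is a torsor under (a quotient of) $G_\mot$ restricted to $\langle M\rangle$, so such a subscheme corresponds to a proper algebraic subgroup; (iii) show that the analytic subgroup theorem forbids the exponential/abelian-exponential image of the period lattice from lying in the Lie algebra of a proper $\Qbar$-algebraic subgroup unless that containment is already forced motivically. Steps (i) and (ii) are the torsor-theoretic bookkeeping already set up in Section~\ref{ssec:motive_torsor} and Proposition~\ref{prop:equiv1}; step (iii) is the genuine number theory.

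The main obstacle, and the heart of \cite{huber-wuestholz}, is step (iii): one must match the ``motivic'' subgroup (the Tannaka group of $\langle M\rangle^{\vee}$, or its semi-torsor analogue) with the ``transcendence-theoretic'' subgroup produced by W\"ustholz's theorem, and check that they coincide. This requires (a) a careful analysis of the structure of algebraic subgroups of vectorial extensions of $G\times G^\vee$, organised by weight, so that the analytic subgroup theorem can be applied recursively to each graded piece; (b) handling the quasi-periods (second-kind differentials), which forces one to work not with $G$ itself but with its universal vectorial extension $G^\natural$, whose periods are exactly $H_1^\dR(M)$ against $H_1^\sing(M)$; and (c) dealing with the fact that $\onemot_\Qbar$ is not closed under tensor product, so one genuinely needs the semi-torsor version rather than a clean Tannakian argument — in particular one cannot simply pass to the tensor-generated subcategory. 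Once the structural classification of the relevant subgroups is in place, each individual application of the analytic subgroup theorem is standard, but assembling them into a statement about \emph{all} relations for \emph{all} $1$-motives at once is where the real work lies.
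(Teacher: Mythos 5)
The reduction in your first paragraph contains a genuine logical error. Corollary~\ref{cor:faithful} gives only one implication: the period conjecture for $\Ch$ \emph{implies} that $T:\Ch\to\VValg$ is fully faithful with image closed under subquotients. You assert the converse as well, but the paper explicitly warns against this: after Proposition~\ref{prop:mt} it is pointed out that ``the fullness statement is much weaker than the period conjecture.'' To see why the converse fails, note that there exist full abelian subcategories of $\VValg$ closed under subquotients for which the period conjecture is simply false. For instance, take $V=(\Qbar^2,\Q^2,\phi)$ with $\phi=\left(\begin{smallmatrix}\alpha&\beta\\ \gamma&-\alpha\end{smallmatrix}\right)$ and $\alpha,\beta,\gamma$ algebraically independent: one checks that $\End_{\VValg}(V)=\Q$ and $V$ has no nontrivial subobjects, so the subquotient-closure $\langle V\rangle$ is semisimple and the fullness/subquotient properties hold tautologically; yet $\Perf(\langle V\rangle)\cong M_2(\Qbar)^\vee$ is $4$-dimensional while the period space is spanned by $\alpha,\beta,\gamma$ and has dimension $3$, so the evaluation map is not injective. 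Consequently, ``it suffices to prove fullness and subquotient-closure'' is not a valid reduction; you must prove the injectivity of $\Perf(\langle M\rangle)\to\C$ directly. (The reduction to $\langle M\rangle$ for each $M$ via Proposition~\ref{prop:equiv1} is fine.)

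There is also an internal tension between your step (ii), which invokes ``$X(M)$ is a torsor under (a quotient of) $G_\mot$,'' and your own point (c), which correctly notes that $\onemot_\Qbar$ is not closed under tensor product so one only has the semi-torsor structure; the dictionary between closed subschemes of $X(M)$ and algebraic subgroups in (ii) relies on Tannakian duality and does not transfer verbatim. The remaining outline---decomposing a $1$-motive by weight, passing to the universal vectorial extension $G^\natural$ to capture the full de Rham realisation, and deducing linear independence of periods from W\"ustholz's analytic subgroup theorem applied to the relevant (semi)abelian group---does reflect the broad strategy of \cite{huber-wuestholz}. But the paper under review does not itself prove Theorem~\ref{thm:1-mot}; it cites it, so the transcendence-theoretic core cannot be checked here. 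What can be checked is the bookkeeping, and that is where the proposal breaks down: you have replaced the period conjecture by a strictly weaker statement.
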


This is first and foremost a result in transcendence theory. With a few extra arguments, it implies:

\begin{cor}[{\cite[Theorem~5.10]{huber-wuestholz}}]Let $X$ be  an algebraic curve over $\Qbar$, $\omega\in\Omega^1_{\Qbar(X)}$ an algebraic differential form, $\sigma=\sum_{i=1}^na_i\gamma_i$ a singular chain avoiding the poles of $\omega$ and with $\partial\sigma$ a divisor
on $X(\Qbar)$. Then the following are equivalent:
\begin{enumerate}
\item the number $\int_\sigma\omega$ is algebraic;
\item we have $\omega= df +\phi$ with $\int_\phi\phi=0$.
\end{enumerate}
\end{cor}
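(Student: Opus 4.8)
The plan is to deduce the corollary from Theorem~\ref{thm:1-mot}, the period conjecture for $\onemot_\Qbar$. Both $(2)\Rightarrow(1)$ and --- perhaps surprisingly --- also $(1)\Rightarrow(2)$ in the case $\partial\sigma\neq 0$ turn out to be purely formal; the transcendence input is needed only when $\sigma$ is a cycle, and there it is exactly Theorem~\ref{thm:1-mot}, used through the remark that a $1$-motive of a curve over $\Qbar$ has no weight-$0$ subquotient.

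For $(2)\Rightarrow(1)$ one only needs Stokes: if $\omega=df+\phi$ with $f\in\Qbar(X)$ regular along $\sigma$ and $\int_\sigma\phi=0$, then $\int_\sigma\omega=\int_\sigma df=\sum_i a_i\bigl(f(\gamma_i(1))-f(\gamma_i(0))\bigr)$ lies in $\Qbar$. For the converse I would first dispose of the case $\partial\sigma\neq 0$. Assume $\int_\sigma\omega=\alpha\in\Qbar$. The support of $\sigma$ is Zariski dense in $X$ (each $\gamma_i$ is non-constant) but has empty interior, so one can choose a $\Qbar$-point $z_0$ lying outside $\mathrm{supp}(\sigma)$, outside the poles of $\omega$, and outside $\mathrm{supp}(\partial\sigma)$; then $U':=X\ohne(\{z_0\}\cup\{\text{poles of }\omega\})$ is a smooth affine curve containing $\mathrm{supp}(\sigma)$, so $\Oh(U')$ surjects onto $\prod_{P\in\mathrm{supp}(\partial\sigma)}\Qbar$. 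Since $\partial\sigma\neq 0$ there is thus $f\in\Oh(U')$ with $f(\partial\sigma)=\alpha$, and putting $\phi:=\omega-df$, which is regular along $\sigma$, yields $\int_\sigma\phi=\alpha-f(\partial\sigma)=0$, i.e.\ $(2)$.

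It remains to treat the essential case $\partial\sigma=0$, where $(2)$ reduces to $\int_\sigma\omega=0$ (take $f=0$) and $(1)$ to $\int_\sigma\omega\in\Qbar$. Let $U$ be the complement of the poles of $\omega$ in a smooth model of $X$, so that $[\omega]\in H^1_\dR(U)$, $[\sigma]\in H_1^\sing(U;\Q)$, and $\int_\sigma\omega$ is their period pairing. By the theory of $1$-motives (\cite{hodge3},~\cite{orgogozo}), $H^1(U)$ --- being the cohomology of a curve --- is the image of an object $M$ of $\onemot_\Qbar$; since $U$ is a curve, $H^1(U)$ has weights $1$ and $2$ only, so $M$ has no nonzero Artin subquotient and hence $\langle M\rangle\cap\langle\Q(0)\rangle=0$ inside $\onemot_\Qbar$. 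By Theorem~\ref{thm:1-mot} the evaluation map $\Perf(\onemot_\Qbar)\to\C$ is injective, and $\Perf(\Ch_1\cap\Ch_2)=\Perf(\Ch_1)\cap\Perf(\Ch_2)$ for full subcategories closed under subquotients (dualise $\Ah(\Ch_1\cap\Ch_2)=\Ah(\Ch_1)\cap\Ah(\Ch_2)$, immediate from the comodule description and Proposition~\ref{prop:inj}); therefore $\Perf(\langle M\rangle)\cap\Perf(\langle\Q(0)\rangle)=0$ in $\Perf(\onemot_\Qbar)$. If now $\int_\sigma\omega\in\Qbar$, then the formal period $(M,\sigma,\omega)\in\Perf(\langle M\rangle)$ and the constant $\int_\sigma\omega$, regarded in $\Perf(\langle\Q(0)\rangle)=\Qbar$, have the same image in $\C$, hence coincide in $\Perf(\onemot_\Qbar)$; lying in the trivial intersection they vanish, so $\int_\sigma\omega=0$, as wanted.

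The genuinely hard ingredient, Theorem~\ref{thm:1-mot}, is imported. Among the remaining steps, the one that will need care is the last paragraph: identifying $H^1(U)$ of an open curve with an object of $\onemot_\Qbar$ that carries no weight-$0$ subquotient, and the formal compatibility of $\Perf$ with intersections of subcategories; the passage to a smooth model and the Stokes identity $\int_\sigma df=f(\partial\sigma)$ are routine. I do not expect the $\partial\sigma\neq 0$ part to cause trouble beyond checking that the point $z_0$ above can always be chosen.
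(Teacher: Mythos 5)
Your proof is correct, and the structure is essentially the one underlying Theorem 5.10 of \cite{huber-wuestholz}: reduce to the period conjecture for $\onemot_\Qbar$, isolate the cycle case as the one carrying the transcendence content, and use that the relevant motive has no weight-$0$ part. The observation that the case $\partial\sigma\neq 0$ is purely a matter of choosing a suitable primitive $f$ is correct and well executed (a point $z_0\in X(\Qbar)$ off the compact set $\mathrm{supp}(\sigma)$ and off the finitely many poles exists because $X(\Qbar)$ is analytically dense, and then $\Oh(U')\twoheadrightarrow\prod_{P\in\mathrm{supp}(\partial\sigma)}\Qbar$ lets you prescribe $f(\partial\sigma)=\alpha$).

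One step you label ``immediate'' deserves an actual argument, since it is used in an essential way and is not in the paper: the compatibility $\Perf(\Ch_1\cap\Ch_2)=\Perf(\Ch_1)\cap\Perf(\Ch_2)$ for full abelian subcategories closed under subquotients. It is true, but the right justification is via the coalgebra side. Every element of the subcoalgebra $\Ah(\Ch_1)\cap\Ah(\Ch_2)\subset\Ah(\onemot)$ lies in a finite-dimensional subcoalgebra $C$; $C$ is the coefficient coalgebra of some comodule $V$, and $C\subset\Ah(\Ch_i)$ forces $V\in\Ch_i$ for $i=1,2$, hence $C\subset\Ah(\Ch_1\cap\Ch_2)$. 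This gives $\Ah(\Ch_1)\cap\Ah(\Ch_2)=\Ah(\Ch_1\cap\Ch_2)$, and the statement for $\Perf$ follows after $\tensor_\Q\C$ using the semi-torsor isomorphism $\Perf(\cdot)_\C\isom\Ah(\cdot)_\C$ and the fact that intersection of $\Q$-subspaces commutes with the flat base change $\Q\to\C$. With this spelled out, the cycle case argument --- that $(H^1(U),\sigma,\omega)=\alpha$ would lie in $\Perf(\langle H^1(U)\rangle)\cap\Perf(\langle\Q(0)\rangle)=\Perf(0)=0$ because $H^1(U)$ has weights $1,2$ while $\Q(0)$ has weight $0$ --- is complete.

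Two smaller points. First, you should state explicitly that you pass to the smooth (projective) model of $X$ and that $H^1_\Nori$ of the open curve $U$ lies in the image of the fully faithful embedding $\onemot_\Qbar\hookrightarrow\MMN(\Qbar)$ of Ayoub--Barbieri-Viale compatibly with the de Rham and singular realisations; your weight assertion then makes sense in $\onemot_\Qbar$ and the formal period $(M,\sigma,\omega)$ indeed lives in $\Perf(\langle M\rangle)\subset\Perf(\onemot_\Qbar)$. Second, the parenthetical ``each $\gamma_i$ is non-constant'' is unneeded: even for constant $\gamma_i$ the support is finite, so it still has empty interior in $X(\C)$, which is all you use.
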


This contains the previously known results on transcendence of $\pi$ or
values of $\log$ in algebraic numbers as well as periods of differential forms over closed paths. The really new case concerns differential forms of the third kind (so with non-vanishing residues) and non-closed paths. This settles
questions of Schneider that had been open since the 1950s.
Here is a very explicit example:

\begin{ex}[{\cite[Theorem~7.6]{huber-wuestholz}}]Let $E/\Qbar$ be an elliptic curve. Recall the Weierstrass $\wp$-, $\zeta$- and $\sigma$-function for $E$. Let
$u\in\C$ such that $\wp(u)\in\Qbar$ and $\exp_E(u)$ is non-torsion in $E(\Qbar)$. Then
\[ u\zeta(u)-2\log(\sigma(u))\]
is transcendental.
\end{ex}
In the spirit of Baker's theorem, 
Theorem~\ref{thm:1-mot} also allows us to give an explicit formula for
$\dim_\Qbar\Per\langle M\rangle$ for all $1$-motives $M$. It is not easy to state, so we simply refer to \cite[Chapter~6]{huber-wuestholz}.

On the other hand, Theorem~\ref{thm:1-mot} has consequences for the various conjectures discussed before.
In \cite{ayoub-barbieri}, Ayoub and Barbieri-Viale show that $\onemot_k$ 
is a full subcategory closed under subquotients in $\MMN^\eff(k)$. They also
give descpription of $\Ah(\onemot_k)$ in terms of generators and relations.
Together with Theorem~\ref{thm:1-mot} this implies Kontsevich's period conjecture for curves.

In detail: Following \cite[Definition~4.6]{huber-wuestholz} we call a period number
 of \emph{curve type} if it is a period of a motive of the form
$H^1(C,D)$ with $C$ a smooth affine curve over $\Qbar$ and
$D\subset C$ a finite set of $\Qbar$-valued points.

\begin{cor}[{\cite[Theorem~5.3]{huber-wuestholz}}]
All relations between periods of curve type are induced by bilinearity and functoriality of pairs $(C,D)\to (C',D')$ with $C, C'$ smooth affine curves and
$D\subset C$, $D'\subset C'$ finite sets of points.
\end{cor}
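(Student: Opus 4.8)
The plan is to identify the ``geometric'' formal period space of curve type with $\Perf(\onemot_\Qbar)$ and then to invoke Theorem~\ref{thm:1-mot}. Write $\Perf_{\mathrm{curve}}$ for the $\Q$-vector space on symbols $(C,D,\omega,\sigma)$ with $C$ a smooth affine curve over $\Qbar$, $D\subset C$ a finite set of points, $\omega\in H^1_\dR(C,D)$, $\sigma\in H_1^\sing(C,D;\Q)$, modulo bilinearity and functoriality for morphisms of pairs $(C,D)\to(C',D')$; then the assertion of the corollary is exactly that the evaluation map $\Perf_{\mathrm{curve}}\to\C$, $(C,D,\omega,\sigma)\mapsto\int_\sigma\omega$, is injective. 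Since $H^1(C,D)$ is a $1$-motive and, by \cite{ayoub-barbieri} (together with Orgogozo's equivalence of $D^b(\onemot_\Qbar)$ with the subcategory of $\DMgm(\Qbar)$ generated by motives of varieties of dimension at most $1$, see \cite{orgogozo}), these motives generate $\onemot_\Qbar$ as an abelian subcategory of $\MMNeff(\Qbar)$ closed under subquotients, there is a tautological $\Q$-linear map $\Perf_{\mathrm{curve}}\to\Perf(\onemot_\Qbar)$, $(C,D,\omega,\sigma)\mapsto(H^1(C,D),\sigma,\omega)$, compatible with the functoriality relations and with evaluation to $\C$. It remains to show this map is an isomorphism and that $\Perf(\onemot_\Qbar)\to\C$ is injective.

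First I would check surjectivity. Given $M\in\onemot_\Qbar$, realise it as a subquotient of $N=\bigoplus_j H^1(C_j,D_j)$; then, using additivity to split off the $H^1(C_j,D_j)$ and the functoriality relations in $\Perf(\onemot_\Qbar)$ for the inclusion $M\hookrightarrow N$ (respectively a projection $N\twoheadrightarrow M$), any symbol $(M,\sigma,\omega)$ can be rewritten --- lifting $\sigma$ along the surjection $T_\sing(N)^\vee\to T_\sing(M)^\vee$, respectively lifting $\omega$ along $T_\dR(N)\to T_\dR(M)$, exactness of the fibre functors making $\int_\sigma\omega$ unchanged --- as a $\Qbar$-linear combination of symbols attached to the $H^1(C_j,D_j)$, which lie in the image.

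The main point is injectivity of $\Perf_{\mathrm{curve}}\to\Perf(\onemot_\Qbar)$, and here I would follow the pattern of Proposition~\ref{prop:make_ab}. Both sides carry a semi-torsor structure: $\Perf_{\mathrm{curve}}$ over the diagram coalgebra $\Ah(\mathcal{D})$ of the diagram $\mathcal{D}$ whose vertices are the pairs $(C,D)$ and whose edges are the morphisms of pairs, and $\Perf(\onemot_\Qbar)$ over $\Ah(\onemot_\Qbar)$; and after base change to $\C$ the period isomorphism $\phi_T$ identifies each semi-torsor with the base change of its own coalgebra. Hence the map on formal period spaces is an isomorphism as soon as the induced coalgebra map $\Ah(\mathcal{D})\to\Ah(\onemot_\Qbar)$ is one --- and this is precisely the presentation of $\Ah(\onemot_\Qbar)$ by the generators (pairs of curves) and relations (morphisms of pairs) established in \cite{ayoub-barbieri}. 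This yields $\Perf_{\mathrm{curve}}\isom\Perf(\onemot_\Qbar)$; composing with the injective evaluation $\Perf(\onemot_\Qbar)\to\C$ of Theorem~\ref{thm:1-mot} (the period conjecture for $\onemot_\Qbar$) gives the claim.

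The hard part is this injectivity step, i.e.\ the geometric input of \cite{ayoub-barbieri}: one must know that every $\Qbar$-linear relation forced on the periods of the motives $H^1(C,D)$ by the structure of the diagram (Tannakian) coalgebra already comes from functoriality for morphisms of pairs of curves --- equivalently, that morphisms of $1$-motives which are not visibly morphisms of pairs contribute nothing new at the level of $\Ah$. This rests on Orgogozo's equivalence and the explicit description of $1$-motives via Picard and divisor groups of curves, and is the only non-formal ingredient. Everything else --- the passage between ``periods'' and ``periods of a category'', the functoriality reductions above, and the semi-torsor bookkeeping --- is formal and follows the template already developed in Sections~\ref{ssec:form_cat} and~\ref{ssec:galois} and in the proof of Theorem~\ref{thm:indep}.
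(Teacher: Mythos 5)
Your proposal is correct and follows the same route the paper takes: identify the formal period space of curve type with $\Perf(\onemot_\Qbar)$ via the Ayoub--Barbieri-Viale presentation of $\Ah(\onemot_\Qbar)$ by pairs of curves and morphisms of pairs (using the semi-torsor reduction of Proposition~\ref{prop:make_ab} to pass from the coalgebra level to the formal period space), and then invoke Theorem~\ref{thm:1-mot}. You also correctly flag the Ayoub--Barbieri-Viale generators-and-relations description as the single non-formal geometric input; the paper likewise cites that result together with Theorem~\ref{thm:1-mot} and treats the rest as bookkeeping.
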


In a different direction:
\begin{cor}[{\cite[Theorem~5.4]{huber-wuestholz}}, \cite{ABB}]
The functor $\onemot_\Qbar\to \VValg$ is fully faithful.
\end{cor}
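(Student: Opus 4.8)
The plan is to deduce this from the period conjecture for $1$-motives (Theorem~\ref{thm:1-mot}) combined with the abstract results of Section~5. First I would invoke Corollary~\ref{cor:faithful}: since $\onemot_\Qbar$ is abelian, $\Q$-linear, and the functor $H:\onemot_\Qbar\to\VValg$ is faithful and exact, the validity of the period conjecture for $\onemot_\Qbar$ immediately gives that $H$ is fully faithful with image closed under subquotients. So fullness is a formal consequence once one has Theorem~\ref{thm:1-mot} together with the identification of $\onemot_\Qbar$ with an abelian category carrying a faithful exact realisation functor to $\VValg$ (equivalently, with its own diagram category via Remark~\ref{rem:univ}). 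The key point to check is that the two fibre functors $H_\dR$ and $H_\sing$ on $\onemot_\Qbar$ really assemble into a single functor to $\VValg=(\Qbar,\Q)\Vect$ with the period isomorphism, which is standard for Deligne $1$-motives.

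A second ingredient I would make explicit is the input from Ayoub--Barbieri-Viale \cite{ayoub-barbieri}: that $\onemot_k$ sits inside $\MMN^\eff(k)$ as a full abelian subcategory closed under subquotients. This is not strictly needed for the bare statement — Corollary~\ref{cor:faithful} applies to $\onemot_\Qbar$ directly — but it is what makes the result compatible with the rest of the paper, and it lets one phrase the argument as: the period conjecture holds for $\MMN(\Qbar)$ restricted to $\onemot_\Qbar$ (via Corollary~\ref{cor:full} applied inside $\MMN$, or directly since $\langle M\rangle$ for a $1$-motive $M$ lies in $\onemot_\Qbar$), hence by Corollary~\ref{cor:faithful} the realisation is fully faithful.

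Concretely the steps are: (1) recall that $\onemot_\Qbar$ is abelian and $\Q$-linear and that $H=(H_\dR,H_\sing,\phi)$ is a faithful exact functor $\onemot_\Qbar\to\VValg$; (2) apply Theorem~\ref{thm:1-mot}, the period conjecture for $\onemot_\Qbar$; (3) apply Corollary~\ref{cor:faithful} to conclude that $H$ is fully faithful (indeed with image closed under subquotients); (4) optionally note via \cite{ayoub-barbieri} that this is consistent with, and implied by, the picture inside $\MMN^\eff(\Qbar)$.

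The main obstacle is essentially bookkeeping rather than a genuine difficulty: one must be sure that the category $\onemot_\Qbar$ of iso-$1$-motives, the realisation functor used in Theorem~\ref{thm:1-mot}, and the functor $H$ appearing in Corollary~\ref{cor:faithful} are literally the same data — in particular that $H_\sing$ is faithful and exact on $\onemot_\Qbar$ so that Corollary~\ref{cor:faithful} applies. Given the results quoted from \cite{huber-wuestholz} and \cite{ayoub-barbieri} this is immediate, so the proof is short: it is the combination of Theorem~\ref{thm:1-mot} with Corollary~\ref{cor:faithful}.
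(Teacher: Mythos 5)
Your proof is correct and is in substance the paper's own argument: combine Theorem~\ref{thm:1-mot} (the period conjecture for $\onemot_\Qbar$) with Corollary~\ref{cor:faithful}, using that $\onemot_\Qbar$ is abelian and the realisation to $\VValg$ is faithful and exact. Note that the paper's proof cites Corollary~\ref{cor:full}, but that appears to be a slip of the reference --- Corollary~\ref{cor:full} only concerns inheritance of the period conjecture by full subcategories closed under subquotients and says nothing about fully faithfulness, whereas Corollary~\ref{cor:faithful}, which you invoke, is the one that yields the stated conclusion.
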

\begin{proof}
This is Corollary~\ref{cor:full}. An independent proof was given by Andreatta, Barbieri-Viale and Bertapelle.
\end{proof}

On the other hand, Orgogozo \cite{orgogozo} and Barbieri-Viale, Kahn \cite{BVK}
showed that the derived category of $\onemot_k$ is a full subcategory of
$\DMgm(k)$. 

\begin{cor}
The fullness conjecture holds for $d_{\leq 1}\DMgm(\Qbar)$.
\end{cor}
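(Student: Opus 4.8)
The plan is to run the argument of the proof of Proposition~\ref{prop:full_gm} in the case at hand, where the analogues of its four hypotheses are, for motives of dimension $\le 1$, unconditional. First I would record the geometric inputs. By \cite{orgogozo} (see also \cite{BVK}) there is an equivalence of triangulated categories
\[ d_{\leq 1}\DMgm(\Qbar)\isom D^b(\onemot_\Qbar) \]
carrying the $t$-structure induced by the singular realisation on the left to the standard $t$-structure on the right, so that $\onemot_\Qbar$ is the heart and the singular realisation is conservative on $d_{\leq 1}\DMgm(\Qbar)$. By \cite{ayoub-barbieri} the category $\onemot_\Qbar$ is a full subcategory of $\MMN^\eff(\Qbar)$ closed under subquotients, so the realisation functor of \cite{harrer}, restricted to $d_{\leq 1}\DMgm(\Qbar)$, factors through $D^b(\onemot_\Qbar)\to D^b(\MMN(\Qbar))$ and, under the equivalence above, is just $D^b$ of the inclusion. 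Composing with $\MMN(\Qbar)\to\VValg$, we see that the functor whose fullness we must establish is the composite
\[ H^0\colon d_{\leq 1}\DMgm(\Qbar)\isom D^b(\onemot_\Qbar)\xrightarrow{H^0}\onemot_\Qbar\to\VValg. \]

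Next I would assemble the two remaining inputs. The first is that $\onemot_\Qbar\to\VValg$ is fully faithful; this is the corollary stated just above, obtained from Theorem~\ref{thm:1-mot} via Corollary~\ref{cor:faithful}. The second, which deserves the most care, is that $\onemot_\Qbar$ has cohomological dimension $1$, i.e.\ $\Ext^i_{\onemot_\Qbar}=0$ for $i\ge 2$; via the equivalence above this reduces, by d\'evissage along the weight filtration, to the vanishing of $\Hom_{\DMgm(\Qbar)}(M,N[i])$ for $i\ge 2$ between the pure motives $\Q(0)$, $\Q(1)$ and $h_1(A)$ ($A$ an abelian variety) that occur as weight-graded pieces, which is a known property of motivic cohomology over the algebraically closed field $\Qbar$; alternatively one cites cohomological dimension $1$ of $\onemot_\Qbar$ directly from \cite{orgogozo} or \cite{BVK}.

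With these inputs the argument is word for word that of Proposition~\ref{prop:full_gm}. Cohomological dimension $1$ makes every object of $D^b(\onemot_\Qbar)$ (non-canonically) isomorphic to the direct sum of the shifts of its cohomology objects, so it suffices to prove surjectivity of
\[ \Hom_{d_{\leq 1}\DMgm(\Qbar)}(N_1[i_1],N_2[i_2])\to\Hom_{\VValg}(H^0(N_1[i_1]),H^0(N_2[i_2])) \]
for $N_1,N_2\in\onemot_\Qbar$ and $i_1,i_2\in\Z$. The right-hand side vanishes unless $i_1=i_2=0$, in which case the map is $\Hom_{\onemot_\Qbar}(N_1,N_2)\to\Hom_{\VValg}(N_1,N_2)$, surjective by the full faithfulness just recalled. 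This proves the fullness conjecture for $d_{\leq 1}\DMgm(\Qbar)$. The only genuine obstacle is the cohomological dimension input, together with the routine verification that the various $t$-structures and realisation functors match up as asserted; both amount to bookkeeping around results already in the literature.
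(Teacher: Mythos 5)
Your proof is correct and follows the same route as the paper: cite Orgogozo (resp.\ Barbieri-Viale--Kahn) for the equivalence $d_{\leq 1}\DMgm(\Qbar)\isom D^b(\onemot_\Qbar)$ and for cohomological dimension $1$, then rerun the argument of Proposition~\ref{prop:full_gm} unconditionally, using the fullness of $\onemot_\Qbar\to\VValg$ from Theorem~\ref{thm:1-mot}. The only difference is that you spell out the bookkeeping (matching of $t$-structures, factorisation of the realisation functor) that the paper leaves implicit, and you offer an optional d\'evissage argument for cohomological dimension $1$ before falling back on the direct citation.
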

\begin{proof}
By \cite[Proposition~3.2.4]{orgogozo}, the category
$d_{\leq 1}\DMgm(\Qbar)\isom D^b(\onemot_\Qbar)$ has cohomological dimension
$1$. We can now use the same argument as in the proof of Proposition~\ref{prop:full_gm}, unconditionally.
\end{proof}

\begin{cor}[{\cite[Theorem~5.4]{huber-wuestholz}}]
The natural functors $\onemot_\Qbar\to \MMN(\Qbar)$ (non-effective Nori motives) and $\onemot_\Qbar\to\MHS_\Qbar$ (mixed Hodge structures over $\Qbar$)
are full.
\end{cor}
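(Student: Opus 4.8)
The goal is to prove that the natural functors $\onemot_\Qbar\to \MMN(\Qbar)$ and $\onemot_\Qbar\to\MHS_\Qbar$ are full. The plan is to deduce both statements from the period conjecture for $1$-motives, Theorem~\ref{thm:1-mot}, by the same mechanism that produced the fully faithfulness of $\onemot_\Qbar\to \VValg$ in the previous corollary. The key observation is that each of the three categories $\MMN(\Qbar)$, $\MHS_\Qbar$ receives a faithful exact functor to $\VValg$ (for Nori motives this is the functor of Definition~\ref{defn:VV}; for mixed Hodge structures over $\Qbar$ one forgets the weight and Hodge filtrations, keeping the $\Qbar$-de Rham space $\mathrm{gr}$ of the Hodge filtration — or rather the underlying bi-filtered comparison data — together with the $\Q$-Betti space and the comparison isomorphism), and that the composite $\onemot_\Qbar\to \VValg$ through either of them is the same functor $H$ which we already know is full with image closed under subquotients.

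First I would record that $\onemot_\Qbar$ is a full abelian subcategory of $\MMN^\eff(\Qbar)$ closed under subquotients, which is exactly the theorem of Ayoub--Barbieri-Viale cited above; the same holds inside $\MHS_\Qbar$ by Deligne's original construction, where $1$-motives are identified with a full subcategory of $\MHS$ closed under subquotients (torsion-free, of level contained in $\{0,-1\}$ with polarizable $\mathrm{gr}$). Next, given $M, N\in\onemot_\Qbar$ and a morphism $g\colon M\to N$ in $\MMN(\Qbar)$ (resp. in $\MHS_\Qbar$), I would form its image under the faithful exact functor to $\VValg$; Corollary~\ref{cor:full} together with Theorem~\ref{thm:1-mot} tells us $H\colon\onemot_\Qbar\to\VValg$ is full, so the image of $g$ in $\VValg$ is of the form $H(f)$ for a unique $f\colon M\to N$ in $\onemot_\Qbar$. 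Since the functor $\MMN(\Qbar)\to\VValg$ (resp. $\MHS_\Qbar\to\VValg$) is faithful, and since $H$ factors through it, the morphism $f$ viewed in $\MMN(\Qbar)$ (resp. $\MHS_\Qbar$) and the morphism $g$ have the same image in $\VValg$; faithfulness then forces $g = f$ as morphisms in the intermediate category, proving fullness.

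The main obstacle — the only genuine point to check — is that there is indeed a faithful exact functor from $\MHS_\Qbar$ to $\VValg$ whose restriction to $\onemot_\Qbar$ coincides with $H$. For mixed Hodge structures the comparison datum is an isomorphism between the $\C$-scalar extension of a $\Qbar$-vector space (carrying the Hodge filtration) and the $\C$-scalar extension of a $\Q$-vector space (carrying the weight filtration); forgetting the two filtrations but keeping the two vector spaces and the isomorphism lands us in $\VValg$, and this is manifestly faithful (a morphism of mixed Hodge structures is zero iff the underlying linear map is) and exact (the underlying functors are). One must then check this agrees with the realisation used to define periods of Nori motives and of $1$-motives, which is precisely the content of the comparison between Nori's de Rham realisation and Deligne's Hodge-theoretic de Rham realisation on $1$-motives; this compatibility is part of the setup of \cite{period-buch} and \cite{huber-wuestholz}. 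Once this identification is in place, the argument above is purely formal, and the same reasoning handles the Nori case directly with no extra input beyond the Ayoub--Barbieri-Viale embedding.
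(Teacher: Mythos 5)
Your argument is correct and is exactly the paper's one-line proof unpacked: the composite $\onemot_\Qbar\to\VValg$ is already known to be full (the preceding corollary), and since the forgetful functors $\MMN(\Qbar)\to\VValg$ and $\MHS_\Qbar\to\VValg$ are faithful, fullness of a composite $G\circ F$ with $G$ faithful forces $F$ to be full. Your extra remarks on the compatibility of the de Rham/Betti comparison data across the three categories and on Ayoub--Barbieri-Viale are reasonable scaffolding but not needed for the deduction itself, which is purely the formal category-theoretic observation.
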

\begin{proof}Their composition with the functor to $\VValg$ is full.
\end{proof}

A much stronger version of the above was recently shown by Andr\'e by comparing the Tannakian dual of the tensor category generated by $1$-motives to the Mumford Tate group.
\begin{thm}[Andr\'e {\cite[Theorem~2.0.1]{andre4}}]
Let $k\subset \C$ be algebraically closed. Let $\onemot_k^\tensor\subset \MMN(k)$ be the full Tannakian subcategory closed under subquotients generated by
$\onemot_k$. 
Then the functor
\[ \onemot_k^\tensor\to \MHS\]
is fully faithful.
\end{thm}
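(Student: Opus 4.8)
The plan is to reinterpret full faithfulness Tannaka-theoretically and then prove it by a d\'evissage along the weight filtration: the pure part will be governed by the Deligne--Andr\'e results on motivated cycles on abelian varieties, and the mixed part by the full faithfulness of $\onemot_k\to\MHS$ already available at the level of abelian categories. Concretely, fix the Betti fibre functor $\omega_B$ on both sides. It makes $\onemot_k^\tensor$ a neutral Tannakian category with dual $G_1:=\mathrm{Aut}^\otimes(\omega_B)$, and the Tannakian subcategory $\mathcal H\subseteq\MHS$ generated by its image has dual the Mumford--Tate group $\mathrm{MT}$ of all $1$-motives over $k$. Since the fibre functors agree and the image generates $\mathcal H$, one obtains a closed immersion $\mathrm{MT}\hookrightarrow G_1$, which records that every ``$1$-motivic'' tensor is a Hodge tensor. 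Because $\onemot_k^\tensor$ is closed under subquotients in $\MMN(k)$, the functor to $\MHS$ is fully faithful if and only if this closed immersion is faithfully flat, i.e. an isomorphism; so everything comes down to the reverse inclusion $G_1\subseteq\mathrm{MT}$ --- equivalently, every Hodge class inside a tensor construction built from $1$-motives over $k$ already underlies a sub-Nori-motive.

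Next I would run a d\'evissage. Both groups carry the weight cocharacter, so they sit in extensions $1\to U\to G\to G^{\red}\to 1$ with $U$ pro-unipotent and $G^{\red}$ the dual of the category of pure (semisimple) objects. The pure part of $\onemot_k^\tensor$ is the Tannakian category generated by the weight-graded pieces of $1$-motives, namely by $\Q(1)$ and by $H_1(A)$ for abelian varieties $A/k$ (there is no Artin contribution, since $k$ is algebraically closed). For this category the identification ``Nori $=$ Hodge'' is precisely the statement that Hodge classes on products of abelian varieties are motivated: one combines Arapura's theorem \cite{arapura} that pure Nori motives are equivalent to Andr\'e's category $\AM$ of motives for motivated cycles with the theorem of Andr\'e --- the motivated refinement of Deligne's result that Hodge classes on abelian varieties are absolutely Hodge --- that such classes are in fact motivated. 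This gives $G_1^{\red}=\mathrm{MT}^{\red}$, so $\mathrm{MT}\hookrightarrow G_1$ is an isomorphism as soon as it is one on pro-unipotent radicals. I expect this pure step to be the main obstacle: without it one has only the tautological inclusion $\mathrm{MT}\subseteq G_1$.

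It then remains to compare $U_{\mathrm{MT}}$ and $U_1$. As a pro-representation of $G_1^{\red}$, $\Lie U_1$ is generated by its weight-graded pieces, which are controlled by the groups $\Ext^1_{\onemot_k^\tensor}(\mathbf 1,P)$ for pure objects $P$ of negative weight, and similarly for $\mathrm{MT}$ inside $\mathcal H$. The structural point that makes this manageable is that $\mathrm{gr}^W_{-1}$ of \emph{any} tensor construction of $1$-motives is a direct sum of $H_1(A_i)$'s of honest abelian varieties (a weight $-1$ monomial in $\Q(0)$, $\Q(1)$, $H_1(A)$ must use exactly one $H_1$-factor), and, more generally, by a Leibniz-type rule all extension data occurring in such tensor constructions is assembled functorially from the extension data of the constituent $1$-motives, i.e. from $k$-points of semi-abelian varieties. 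Hence $\Ext^1_{\onemot_k^\tensor}(\mathbf 1,H_1B)=B(k)\otimes\Q$ (using the presentation of $\Ah(\onemot_k)$ of Ayoub--Barbieri-Viale \cite{ayoub-barbieri} together with this weight analysis), and the realization map identifies it with the $k$-rational part $B(k)\otimes\Q\subseteq B(\C)\otimes\Q=\Ext^1_{\MHS}(\mathbf 1,H_1B)$, which is exactly what is realized inside $\mathcal H$; the abelian-level full faithfulness $\onemot_k\to\MHS$ (Deligne \cite{hodge3}; compare the full-faithfulness corollary for $1$-motives above) makes the identification clean. The same bookkeeping at weights $-2,-3,\dots$ --- now also involving $\Q(1)$, $\mathrm{Sym}^2H_1$, and so on --- goes through verbatim, yielding $U_{\mathrm{MT}}=U_1$, hence $\mathrm{MT}=G_1$, hence the asserted full faithfulness.

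In short, the difficulty concentrates in the pure/reductive step, which imports the full strength of the Deligne--Andr\'e theorem on motivated cycles on abelian varieties; without it one cannot go beyond $\mathrm{MT}\subseteq G_1$. The mixed/unipotent step is comparatively bookkeeping, and its one genuine subtlety is that passing from $\onemot_k$ to the tensor category $\onemot_k^\tensor$ could a priori create ``exotic'' extension classes among tensor constructions of $1$-motives that are invisible on $1$-motives themselves --- the weight-graded computation above is precisely what excludes this.
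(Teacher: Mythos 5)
The paper does not prove this theorem --- it is quoted from Andr\'e's paper \cite{andre4} --- so there is no in-text proof to compare against. The paper's remark that Andr\'e proceeds ``by comparing the Tannakian dual of the tensor category generated by $1$-motives to the Mumford Tate group'' indicates that your overall framework (the closed immersion $\mathrm{MT}\hookrightarrow G_1$, reduction to surjectivity, d\'evissage along the weight cocharacter, and the use of Andr\'e's motivated-cycle theorem on abelian varieties together with Arapura's theorem for the reductive quotient) is in the right spirit.

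There is, however, a genuine gap in the unipotent step, which you describe as ``comparatively bookkeeping'' but which is where much of the real work lies. The identity $\Ext^1_{\onemot_k^\tensor}(\Q(0),H_1 B)=B(k)\tensor\Q$, and its analogues for every simple pure object $P$ of negative weight (not only $\Q(n)$, $H_1A$, $\mathrm{Sym}^2 H_1A$, but every simple $G_1^{\red}$-constituent of a tensor construction), do not follow from your weight-graded observation. Knowing that $\mathrm{gr}^W_{-1}$ of a tensor of $1$-motives is a sum of $H_1(A_i)$'s identifies the associated graded of the objects involved, but it does not exclude extensions, arising as subquotients of tensors of $1$-motives, that are not themselves $1$-motives; there is no ``Leibniz-type rule'' that reduces $\Ext^1$ in $\onemot_k^\tensor$ to $\Ext^1$ in $\onemot_k$, and establishing such a reduction is exactly the hard point. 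You also implicitly rely on the fullness of $\onemot_k\to\MMN(k)$ for a general algebraically closed $k\subset\C$ in order to pass between the two $\Ext$-groups, but, as the paper itself notes, that fullness is a \emph{consequence} which Andr\'e ``has to show on the way'' and cannot be taken as an input. In short, both the reductive and the unipotent steps require substantial arguments, and the unipotent one as you have written it does not close.
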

In the case $k=\Qbar$, this means that the $\Qbar$-structure can actually be recovered from the Hodge- and weight filtration -- and conversely. His result also gives fullness of $\onemot_k\to \MMN(k)$ for all algebraically closed $k\subset \C$ (actually he has to show this on the way).

\bibliographystyle{alpha}
\bibliography{periods}

\end{document}